\documentclass{amsart}

\usepackage{hyperref}
\usepackage{tikz-cd}
\usepackage{amsfonts,amsmath,amssymb,mathrsfs, stmaryrd, amsthm}
\usepackage{enumitem}
\usepackage{cite}
\usepackage{amsrefs}
\usepackage{bm}
\usepackage[greek,english]{babel}
\usepackage{teubner}

\title{The smallest quantum Mackey deformation}
\author{Yvann Gaudillot--Estrada}
\email{yvann.gaudillot-estrada@univ-lorraine.fr}
\thanks{This research was supported by the projects OpART (ANR-23-CE40-0016) and CroCQG (ANR-25-CE40-5010) of the \emph{Agence Nationale de la Recherche}.  It is based upon work from COST Action CaLISTA CA21109 supported by COST (European Cooperation in Science and Technology) \url{www.cost.eu}.  Part of the research was carried out during the special trimester on Representation Theory and Noncommutative Geometry at the Institut Henri Poincaré (UAR 839 CNRS-Sorbonne Université) --- LabEx CARMIN (ANR-10-LABX-59-01).}
\subjclass[2020]{46L67, 46L65, 22E46, 46L80, 17B37, 20G42}
\keywords{Real semisimple quantum groups, Continuous field of C*-algebra, Unitary representations, K-theory, Baum-Connes conjecture, Parabolic induction}

\newtheorem{theorem}{Theorem}[section]
\newtheorem{proposition}[theorem]{Proposition}
\newtheorem{lemma}[theorem]{Lemma}
\newtheorem{corollary}[theorem]{Corollary}

\theoremstyle{definition}
\newtheorem{definition}[theorem]{Definition}
\newtheorem{remark}[theorem]{Remark}

\newtheorem*{convention}{Convention}
\newtheorem*{vocabulary}{Vocabulary}
\newtheorem{notation}[theorem]{Notation}

\setlist[itemize]{leftmargin=0.7cm}

\newcommand{\cpa}{\text{\coppa}}

\begin{document}    

\begin{abstract}
    When $G$ is a real semisimple group, there is a surprising interplay between its representation theory and that of its motion group $G_0$, known as the Mackey analogy. The present paper extends this analogy to the framework of $q$-deformations, for $G = \mathrm{SL}(2,\mathbb{R})$. In fact, we construct a deformation of  $\mathrm{SL}(2,\mathbb{R})$ parametrized by $(q,t) \in \mathbb{R}_+^* \times \mathbb{R}$, where $q$ is the quantization parameter and $t$ is the Mackey parameter. We show how the representation theory varies along this deformation and we prove an analogue of the Connes-Kasparov isomorphism for the $q$-deformed reduced group C*-algebra.
    
\end{abstract}

\maketitle

Assume that $G$ is a semisimple Lie group and $K \subset G$ is a maximal compact subgroup. The motion group of $G$ is the semidirect product $G_0 = K \ltimes (\mathfrak{g}/\mathfrak{k})$, where $\mathfrak{g}/\mathfrak{k}$ is the quotient of the Lie algebra of $G$ by the Lie algebra of $K$, viewed as an abelian Lie group and equipped with the adjoint action of $K$. On the basis of ideas of Mackey \cite{MackeyAnalogy}, Higson exhibited striking similarities between the representation theories of $G$ and $G_0$ in the complex case \cite{MAhigson}. This analogy, named after Mackey, was later carried out by Afgoustidis for real groups \cites{Afgoustidis1,Afgoustidis2,Afgoustidis3}. One of its main features is the existence of a natural bijection mapping continuously (see \cite{AfgoustidisAubert}) the tempered (or reduced unitary) dual of $G$ to the unitary dual of $G_0$ which preserves minimal $K$-types. The Mackey analogy also led to a new proof of the Connes-Kasparov isomorphism: the contraction of $G$ to $G_0$ gives rise to a continuous field of reduced group C*-algebras, called the Mackey deformation, which has constant K-theory. More recently, the authors of \cite{MEmbeddingReal} constructed an embedding $C^*(G_0) \hookrightarrow C^*_r(G)$ and used it to characterize the Mackey bijection and the Connes-Kasparov map (with a prior approach \cite{ComplexMA} in the complex case).

In a completely different direction, another interesting deformation of $G$ is given by quantization. In a recent paper \cite{DCquantisation}, De Commer constructed $q$-deformations of various convolution algebras of $G$ related to a real Poisson-Lie structure on the complexification of $G$. If $q \neq 1$ is a real positive parameter, there are $q$-analogues of the enveloping $\ast$-algebra of $\mathfrak{g}$, of the Hecke algebra associated to $(\mathfrak{g},K)$ and of the maximal group C*-algebra of $G$ which we respectively denote by $U_q(\mathfrak{g})$, $R_q(\mathfrak{g},K)$ and $C^*_q(G)$. So far, the thorough study of these algebras and their representations has only concerned complex groups and $\mathrm{SL}(2,\mathbb{R})$. For complex groups, De Commer's construction coincides with the Drinfeld double of the compact form; its structure and representation theory is now well understood \cites{VoigtYuncken, Arano2}. The case $G = \mathrm{SL}(2,\mathbb{R})$ was the object of several rather recent publications. The irreducible $\ast$-representations of $C^*_q(G)$ and the non-degenerate simple $R_q(\mathfrak{g},K)$-modules, which are the analogues of the $(\mathfrak{g},K)$-modules, are classified in \cite{DCDTsl2R} and \cite{YGE} respectively. The authors of \cite{DCDTinvariant} constructed an analogue of the regular representation leading to a natural analogue of the reduced group C*-algebra $C^*_{q,r}(G)$. The spectrum of the latter is characterized in \cite{DCinduction}.

The goal of the present paper is to unify these $q$-deformations with the Mackey deformation for $G = \mathrm{SL}(2,\mathbb{R})$, allowing for an extension of the Mackey analogy to $q$-deformed $\mathrm{SL}(2,\mathbb{R})$. Our work is inspired by what has been done by Monk and Voigt for complex semisimple groups \cite{MonkVoigt}.

Our analogue of the motion group for $q$-deformed $\mathrm{SL}(2,\mathbb{R})$ does not depend on~$q$, it is the groupoid $G_0^\star$ of the action of $K = \mathrm{SO}(2)$ on the 2-sphere $K\backslash U$, where $U = \mathrm{SU}(2)$. In what follows, we construct a two-parameter continuous field of C*-algebras $C^*_r(\mathbf{G})$ defined over $\mathbb{R}_+^* \times\mathbb{R}$, whose restriction to $\{1\} \times \mathbb{R}$ is precisely the Mackey deformation field and whose fibers, for each $q \neq 1$, are given by
$$C^*_r(\mathbf{G})_{q,t} = \begin{cases}
    C^*_{q^t,r}(G) & \text{if $t\neq 0$,}\\
    C^*(G^\star_0) &\text{if $t= 0$.}
\end{cases}$$
As we will see, this continuous field of C*-algebras is a reflection of a much more rigid field involving the corresponding (quantized) enveloping algebras.

Moreover, for every fixed $q \neq 1$, we prove that the restriction of $C^*_r(\mathbf{G})$ to $\{q\}\times[0,1]$ is isomorphic to the mapping cone field of a certain embedding $$\alpha_{0}^q : C^*(G^\star_0) \hookrightarrow C^*_{q,r}(G).$$
We also show that this embedding induces a bijection between the spectrum of $C^*(G^\star_0)$ and that of $C^*_{q,r}(G)$, as well as an isomorphism in K-theory. The latter result means that $C^*_r(\mathbf{G})_{|\{q\}\times[0,1]}$ has constant K-theory and thus can be interpreted as an analogue of the Connes-Kasparov isomorphism.

The paper is organized as follows. In the first section, we introduce or develop the operator algebraic tools needed for our purpose. In the second section, we present the various convolution algebras involved in the two-parameter deformation. The third section is devoted to the construction of the continuous field of C*-algebras $C^*_r(\mathbf{G})$. In the last section, we construct the embedding $\alpha_0^q$ and establish the above listed properties.

For the rest of the paper, we write $G = \mathrm{SL}(2,\mathbb{R})$, $K = \mathrm{SO}(2)$, $U = \mathrm{SU}(2)$, unless it is explicitly stated otherwise. We denote by $\mathfrak{g}$, $\mathfrak{k}$, $\mathfrak{u}$ the associated Lie algebras. Moreover, for any real Lie algebra $\mathfrak{a}$, the associated enveloping $\ast$-algebra $U(\mathfrak{a})$ is defined as the complexification of its universal enveloping algebra, endowed the unique $\ast$-structure such that the elements of $\mathfrak{a}$ are skew-adjoint.

\section{Operator algebraic preliminaries}

The goal of this first section is to introduce some background material that will facilitate the construction of the quantum Mackey deformation field. For the rest of the paper, the space of bounded operators and the space of compact operators over any Hilbert space $\mathcal{H}$ are denoted by $\mathfrak{B}(\mathcal{H})$ and $\mathfrak{K}(\mathcal{H})$, respectively.

\subsection{Continuous fields of Hilbert spaces and C*-algebras} Throughout the paper, several continuous fields of Banach spaces will be introduced. Here we recall some of their aspects and fix notations, referring to \cite{Dixmier}*{Chapter 10} for a complete exposition of the topic. More importantly, we introduce a notion of constraint on the continuous fields of elementary C*-algebras associated to locally trivial continuous field of Hilbert spaces. In the following, $X$ denotes any locally compact space.

Let $B$ be any continuous field of Banach spaces over $X$. The fiber of $B$ over each $x \in X$ is denoted by $B_x$ and more generally, for any locally compact subspace $Y \subset X$, the continuous field over $Y$ induced by $B$ is denoted by $B_{|Y}$. The space of continuous vector fields of $B$ is denoted by $\Gamma(B)$ and its subspace consisting of vector fields vanishing at infinity is denoted by $\Gamma_0(B)$. For any $b \in \Gamma(B)$, we write $b_x$ for the evaluation of $b$ at any $x \in X$.

Let $B'$ be another continuous field of Banach spaces over $X$. By definition, an embedding of continuous fields $T : B \to B'$ is a collection of isometries $(T_x : B_x \to B'_x)_{x \in X}$ mapping $\Gamma(B)$ into $\Gamma(B')$. Such an embedding of continuous fields $T$ is called an isomorphism if $T_x$ is surjective for all $x \in X$. In that case, \cite{Dixmier}*{§10.2.4} shows that $\Gamma(B)$ is mapped onto $\Gamma(B')$. When $B'$ is a constant field, an isomorphism $T : B \to B'$ is referred to as a trivialization of $B$

We call $B$ a continuous subfield of $B'$ if $B_x$ is a Banach subspace of $B_x'$ for all $x \in X$ and if $\Gamma(B) \subset \Gamma(B')$. We shall use repeatedly the two following facts.
\begin{itemize}
    \item Given a Banach subspace $S_x$ of $B_x$ for each $x \in X$, there is at most one continuous subfield of $B$ whose fibers are the $S_x$. Such a continuous subfield exists if and only if for any $x \in X$ and any $v \in B_x$, there exists $b \in B$ such that $b_x = v$ and $b_{x'} \in S_{x'}$ for all $x' \in X$.
    \item If $Y$ is a closed subset of $X$, then the restriction map $\Gamma_0(B) \to \Gamma_0(B_{|Y})$ is surjective.
\end{itemize}
For the first fact, the uniqueness follows from \cite{Dixmier}*{§10.2.4}. The second fact is obtained by combining \cite{Dixmier}*{§10.1.12} and \cite{Dixmier}*{§10.2.6}.

We call $B$ a continuous field of Hilbert spaces if each of its fibers is a Hilbert space. In that case, by polarization, the pointwise inner product of two continuous vector fields is continuous over $X$.

We call $B$ a continuous field of C*-algebras if each of its fibers is a C*-algebra and if $\Gamma(B)$ is stable by pointwise multiplication and involution. In that case, $\Gamma_0(B)$ has a natural C*-algebra structure. Any of its irreducible representations is of the form $\pi \circ\mathrm{ev}_x: \Gamma_0(B) \twoheadrightarrow B_x \to \mathfrak{B}(H)$, where $\mathrm{ev}_x$ denotes the evaluation at some point $x \in X$ and $(\pi, H)$ is an irreducible representation of $B_x$. In that way, the spectrum of $\Gamma_0(B)$ identifies with the disjoint union of the spectra of $B_x$ for all $x \in X$. A continuous subfield of C*-algebras of $B$ is defined as a continuous subfield whose space of continuous vector fields is a $\ast$-subalgebra of $\Gamma(B)$ (this implies that the fibers are C*-subalgebras). An embedding of continuous fields of C*-algebras is an embedding of continuous fields which is a fiberwise C*-algebra morphism.

\begin{convention}
    It is a well known fact that a continuous field of C*-algebras $B$ is uniquely characterized by the C*-algebra $\Gamma_0(B)$ and the embedding of $C_0(X)$ into its central multiplier algebra. For that reason, by abuse of notation, the C*-algebra $\Gamma_0(B)$ will also be denoted by $B$.
\end{convention}

A notion that we will use is that of pushforward of continuous field of C*-algebras. Let $Y$ be another locally compact Hausdorff space, $f : X \to Y$ an open continuous surjection and $A$ a continuous field of C*-algebras over $X$. 

\begin{proposition}\label{pushforward}
    There exists a unique continuous field of C*-algebras $f_\ast A$ over $Y$ such that
    \begin{itemize}
        \item the fiber of $f_\ast A$ over each $y \in Y$ is $A_{|f^{-1}(y)}$,
        \item if $a \in A$ then $f_\ast a = \left(a_{|f^{-1}(y)}\right)_{y \in Y}$ is a continuous vector field of $f_\ast A$.
    \end{itemize}
\end{proposition}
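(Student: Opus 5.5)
We will use the Dixmier criterion for continuous fields (\cite{Dixmier}*{§10.2.3}): given Banach spaces $B_y$ for $y \in Y$ and a linear space $\Lambda \subset \prod_y B_y$ such that (i) $y \mapsto \|\lambda_y\|$ is continuous for every $\lambda \in \Lambda$ and (ii) $\{\lambda_y : \lambda \in \Lambda\}$ is dense in $B_y$ for every $y$, there is a unique continuous field structure with $\Lambda \subset \Gamma$. We set $B_y = A_{|f^{-1}(y)}$. Since $f$ is continuous, $f^{-1}(y)$ is closed in $X$, hence locally compact, so $B_y = \Gamma_0(A_{|f^{-1}(y)})$ is a C*-algebra. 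We take $\Lambda = \{f_\ast a : a \in A\}$.

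Density holds trivially, and in fact with surjectivity, because of the second recalled fact: restriction $\Gamma_0(A) \to \Gamma_0(A_{|f^{-1}(y)})$ is onto since $f^{-1}(y)$ is closed. The set $\Lambda$ is a $\ast$-subalgebra of $\prod_y B_y$ because restriction is a $\ast$-homomorphism. Once the field exists, its space $\Gamma(f_\ast A)$ is the local uniform closure of $\Lambda$ and is therefore stable under products and involution. So $f_\ast A$ is a continuous field of C*-algebras.

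Uniqueness follows directly. Any field satisfying the two bullets contains $\Lambda$ among its continuous sections, and $\Lambda$ is fiberwise dense (indeed total). Hence the field is determined, by the uniqueness part of \cite{Dixmier}*{§10.2.3–10.2.4}.

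The main step is continuity of
$$N_a(y) = \sup_{x \in f^{-1}(y)} \|a_x\|.$$
First, $x \mapsto \|a_x\|$ is continuous on $X$ and vanishes at infinity, so the supremum is attained or is approached on a compact set.

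\emph{Lower semicontinuity.} This uses openness of $f$. Fix $x_0 \in f^{-1}(y_0)$ with $\|a_{x_0}\| > N_a(y_0) - \varepsilon$, and let $V$ be a neighbourhood of $x_0$ on which $\|a_x\| > N_a(y_0) - \varepsilon$. Then $f(V)$ is an open neighbourhood of $y_0$, and for every $y \in f(V)$ we get $N_a(y) > N_a(y_0) - \varepsilon$.

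\emph{Upper semicontinuity.} This uses compactness. The set $C = \{x : \|a_x\| \ge N_a(y_0) + \varepsilon\}$ is compact, and $f(C)$ is compact, hence closed since $Y$ is Hausdorff. Moreover $f(C)$ does not contain $y_0$. On the open set $Y \setminus f(C)$ we have $N_a(y) < N_a(y_0) + \varepsilon$. If $N_a(y_0) + \varepsilon \le 0$ the case is trivial.

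Together these give continuity of $N_a$. The Dixmier criterion then applies, which completes the plan.
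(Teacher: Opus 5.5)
Your proposal is correct and follows essentially the same route as the paper: generate the field from $\{f_\ast a : a \in A\}$ via Dixmier's criterion, get fiberwise surjectivity from restriction to the closed set $f^{-1}(y)$, and prove continuity of $y \mapsto \sup_{x \in f^{-1}(y)}\|a_x\|$ using openness of $f$ for the lower bound and a compact set where $\|a_x\|$ is large for the upper bound. The paper merely packages both bounds into the single neighbourhood $f(W)\setminus f(K\setminus V)$, whereas you treat lower and upper semicontinuity separately; your closing remark about the case $N_a(y_0)+\varepsilon\le 0$ is vacuous, since $N_a(y_0)\ge 0$ and $\varepsilon>0$.
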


We call $f_\ast A$ the pushforward of $A$ by $f$. We will neither prove nor use it, but one can show that $f_* : A \to f_*A$ is an isomorphism of C*-algebras (we are ignoring the continuous field structure).
\begin{proof} The uniqueness follows from \cite{Dixmier}*{Proposition 10.2.4}, we only prove the existence.
    Let $\mathscr{S} = \{ f_\ast a : a \in A\} \subset \prod_{y \in Y} A_{|f^{-1}(y)}$. It is clear that $\mathscr{S}$ is a $\ast$-algebra for the pointwise operations and that $\mathscr{S}_y = A_{|f^{-1}(y)}$ for all $y\in Y$. Thus, if we show that $y \in Y \mapsto \Vert (f_\ast a)_y\Vert$ is continuous for every $a \in A$, then $\mathscr{S}$ will generate a continuous field of C*-algebras satisfying the conditions of the proposition.
    
    Let us fix $y_0 \in Y$ and $\epsilon > 0$. Let $x_0 \in X$ be such that $\Vert (f_\ast a)_{y_0}\Vert = \Vert a_{x_0}\Vert$ and let $K$ be a compact neighborhood of $x_0$ such that $\Vert a_x\Vert \leq \varepsilon$ for all $x \in X\setminus K$. By continuity of $x \mapsto \Vert a_x\Vert$, there exists an open set $V \subset X$ containing $f^{-1}(y_0)$ such that
    $$\forall x \in V, \exists x' \in f^{-1}(y_0), \quad \Vert a_x \Vert \leq \Vert a_{x'} \Vert + \epsilon.$$
    Let $W \subset V$ be an open neighborhood of $x_0$ such that
    $$\Vert a_{x_0}\Vert - \epsilon \leq \Vert a_x\Vert \qquad (\forall x \in W).$$
    Given that $f$ is open and $K\setminus V$ is a compact set not intersecting $f^{-1}(y_0)$, the set $U = f(W) \setminus f(K \setminus V)$ is a neighborhood of $y_0$. Then, by construction we have
    $$\Vert a_{x_0}\Vert -\epsilon \leq \Vert(f_\ast a)_y \Vert \leq \Vert a_{x_0}\Vert +\epsilon \qquad (\forall y \in U),$$
    which concludes the proof.
\end{proof}

From now, we fix a locally trivial continuous field of Hilbert spaces $\mathcal{H}$ over $X$. We denote by $\mathfrak{K}(\mathcal{H})$ the continuous field of elementary C*-algebras associated to $\mathcal{H}$, see \cite{Dixmier}*{§10.7.2}.
We recall that the fiber of $\mathfrak{K}(\mathcal{H})$ over every $x \in X$ is $\mathfrak{K}(\mathcal{H}_x)$ and that for any $\psi,\phi \in \Gamma(\mathcal{H})$, the vector field $(\eta \mapsto \langle \psi_x, \eta \rangle \phi_x)_{x \in X}$ of $\mathfrak{K}(\mathcal{H})$ is continuous.

As we will see, every deformed group C*-algebra involved in this paper can be embedded into some $\mathfrak{K}(\mathcal{H})$ for a well-chosen locally trivial continuous field of Hilbert spaces $\mathcal{H}$. In order to be able to describe them precisely, we introduce the notion of constraint.

\begin{definition}
    A preconstraint $\mathscr{C}$ on $\mathcal{H}$ is a collection $(\mathcal{H}_{x,j})_{j \in J_x}$, for each $x \in X$, of mutually orthogonal non-zero closed subspaces of $\mathcal{H}_x$ whose direct sum is dense in $\mathcal{H}_x$. We write $\mathscr{C} = (\mathcal{H}_{x,j})_{x \in X,j \in J_x}$.

    A local fixation of a such a preconstraint $\mathscr{C}$ is a pair $(u,V)$, where $V$ is an open subset of $X$ and $u$ is a trivialization of $\mathcal{H}_{|V}$ such that for every $x \in V$, there exists an open subset $V_x \subset V$ satisfying:
    \begin{equation}\label{semicontinuity}\forall y \in V_x,\; \forall j' \in J_y, \;\exists j \in J_x, \quad u_x(\mathcal{H}_{x,j}) \subset u_y(\mathcal{H}_{y,j'}).\end{equation}
    We call $\mathscr{C}$ a constraint if it admits a collection of local fixations $\{(u_\alpha,V_\alpha)\}_\alpha$ such that $X = \cup_\alpha V_\alpha$
\end{definition}

\begin{proposition}
    Let $\mathscr{C} = (\mathcal{H}_{x,j})_{x \in X,j \in J_x}$ be a constraint on $\mathcal{H}$. There is a unique continuous subfield of $\mathfrak{K}(\mathcal{H})$ whose fiber at each $x \in X$ is $\bigoplus_{j \in J_x} \mathfrak{K}(\mathcal{H}_{x,j})$.
\end{proposition}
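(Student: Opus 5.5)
Uniqueness is immediate from the first of the two facts recalled above: there is at most one continuous subfield with prescribed fibers. So all the work is existence. By the same fact, it suffices to show the following. For every $x \in X$ and every $T \in S_x := \bigoplus_{j\in J_x}\mathfrak{K}(\mathcal{H}_{x,j})$, there is a continuous vector field $b$ of $\mathfrak{K}(\mathcal{H})$ with $b_x = T$ and $b_y \in S_y$ for all $y$.

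Stability of the resulting $\Gamma$ under products and adjoints then comes for free. The space of sections of a subfield is determined as the set of sections of $\mathfrak{K}(\mathcal{H})$ lying fiberwise in $S_y$ (by \cite{Dixmier}*{§10.2.4}), and each $S_y$ is a C*-subalgebra, so this set is a $\ast$-subalgebra. One also checks that $S_x$ is a norm-closed subspace of $\mathfrak{K}(\mathcal{H}_x)$, hence a C*-subalgebra; this is standard for a $c_0$-direct sum of corners.

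The plan for producing such a $b$ is as follows.

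\textbf{Reduction to rank one.} Since sections form a closed subspace and $\Gamma$ is closed under local uniform limits, it suffices to treat $T$ in a dense subset of $S_x$. Finite sums of rank-one operators $\theta_{\xi,\eta} = \langle \xi,\cdot\rangle \eta$ with $\xi,\eta \in \mathcal{H}_{x,j}$ for the same $j$ are dense. So it suffices to handle a single such $\theta_{\xi,\eta}$ and then add the resulting sections. We must justify that limits of sections lying in the $S_y$ stay in the $S_y$; this holds because each $S_y$ is closed.

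\textbf{Local construction.} Pick a local fixation $(u,V)$ with $x\in V$, and let $V_x$ be as in \eqref{semicontinuity}. Put $\xi' = u_x\xi$ and $\eta' = u_x\eta$, both in the constant Hilbert space $H$, and define the constant sections $\tilde\xi_y = u_y^{-1}\xi'$ and $\tilde\eta_y = u_y^{-1}\eta'$ on $V$. These are continuous sections of $\mathcal{H}_{|V}$, since $u$ is a trivialization. For $y\in V_x$, the condition \eqref{semicontinuity} gives an index $j'$ with $u_x(\mathcal{H}_{x,j}) \subset u_y(\mathcal{H}_{y,j'})$. Hence $\tilde\xi_y,\tilde\eta_y\in\mathcal{H}_{y,j'}$ and $\theta_{\tilde\xi_y,\tilde\eta_y}\in \mathfrak{K}(\mathcal{H}_{y,j'})\subset S_y$.

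There is a subtlety in reading \eqref{semicontinuity}: the quantifiers are "for all $j'$ there exists $j$". What we need is "for all $j$ there exists $j'$" with $\mathcal{H}_{x,j}$ landing inside some $\mathcal{H}_{y,j'}$. I would resolve this using orthogonality and density. The sets $u_y(\mathcal{H}_{y,j'})$ have dense direct sum, and each is contained in some $u_x(\mathcal{H}_{x,j})$. So each $u_x(\mathcal{H}_{x,j})$ is the closed direct sum of those $u_y(\mathcal{H}_{y,j'})$ contained in it; the $u_y$-blocks refine the $u_x$-blocks. Consequently $u_x\mathcal{H}_{x,j}$ need not lie in a single $y$-block, and $\theta$ need not lie in $S_y$.

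To handle this I would instead \emph{compress}. Set
$$b_y = \sum_{j'} P_{y,j'}\,\theta_{\tilde\xi_y,\tilde\eta_y}\,P_{y,j'},$$
where $P_{y,j'}$ is the projection onto $\mathcal{H}_{y,j'}$; this sum lies in $S_y$ and equals $\theta$ at $y=x$. The main obstacle is then proving that $y\mapsto b_y$ is norm continuous. I expect this to go as follows. In the trivialization, the block decompositions vary, but for $y$ near $x$ they refine the fixed decomposition at $x$, and the subspaces remain inside $u_x\mathcal{H}_{x,j}$. I would approximate $\xi',\eta'$ by vectors with finitely supported components and control the compression error using the density of the direct sums. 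If that fails, I would fall back on requiring only the weaker local-section property at points and use the Dixmier criterion with approximate sections, via \cite{Dixmier}*{§10.2.3}.

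\textbf{Globalization.} Finally, multiply by a continuous function $\varphi\in C_c(V_x)$ with $\varphi(x)=1$ and extend by zero outside $V_x$. This gives a global continuous section of $\mathfrak{K}(\mathcal{H})$ with value $T$ at $x$ and values in $S_y$ everywhere, which completes the argument.
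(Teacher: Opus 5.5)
Your ``Local construction'' step, up to the conclusion $\theta_{\tilde\xi_y,\tilde\eta_y}\in\mathfrak{K}(\mathcal{H}_{y,j'})\subset S_y$, together with the cut-off by $\varphi$, is already a complete proof. It is the paper's argument: a section that is constant in the trivialization lies in $S_y$ for $y\in V_x$, because each block $u_x(\mathcal{H}_{x,j})$ sits inside some block $u_y(\mathcal{H}_{y,j'})$.

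The gap is in what you do next. You assert that \eqref{semicontinuity} makes each $u_y(\mathcal{H}_{y,j'})$ \emph{contained in} some $u_x(\mathcal{H}_{x,j})$. The inclusion written there goes the other way: each $y$-block \emph{contains} some $x$-block. From this misreading you conclude that the $y$-blocks refine the $x$-blocks and abandon the constant section. You then rest the proof on the compression $b_y=\sum_{j'}P_{y,j'}\theta P_{y,j'}$, whose norm continuity at $x$ you only expect. In the situation you describe, that continuity is false, and so is the proposition itself. Take the constant field $\mathbb{C}^2$ over $[0,1]$, with a single block at $x=0$ and the two coordinate lines as blocks at every $y>0$. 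Then $b_y$ is the diagonal part of $\theta$ for all $y>0$. More fundamentally, any continuous section with $b_y\in S_y$ for $y>0$ has a diagonal value at $0$. So no sharper estimate and no approximate-section fallback can reach a non-diagonal $\theta$ at $x$.

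You cannot derive the needed quantifier order from orthogonality and density, because with the quantifiers as literally written the statement fails. For instance, take the constant field $\mathbb{C}^4$ over $[0,1]$. At $0$ use the lines $\mathbb{C}e_k$ as blocks, and at every $y>0$ use the blocks $\mathrm{span}(e_1,e_3+e_4)$ and $\mathrm{span}(e_2,e_3-e_4)$. Each $y$-block contains an $x$-block, so the literal condition holds. Yet the projection onto $\mathbb{C}e_3$ does not commute with the projection onto $\mathrm{span}(e_1,e_3+e_4)$, so it is not a limit of elements of $S_y$.

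The fix is to take as the definition: for every $j\in J_x$ there is $j'\in J_y$ with $u_x(\mathcal{H}_{x,j})\subset u_y(\mathcal{H}_{y,j'})$. In words, the decomposition can only become coarser near $x$. This is how local fixations are used in the rest of the paper: the sets $J(u,V)_{x,j}$ consist of $y$-blocks containing $u_x(\mathcal{H}_{x,j})$, and the topology lemma argues ``not orthogonal, hence contained''. It is also what holds in the application, where the decomposition is nontrivial only at special parameters.

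With that reading you need neither the compression nor the rank-one reduction. You can transport $a$ itself: $u_xau_x^{-1}$ is a norm-convergent sum of operators in $\mathfrak{K}(u_x\mathcal{H}_{x,j})\subset\mathfrak{K}(u_y\mathcal{H}_{y,j'(j)})$. Hence $u_y^{-1}u_xau_x^{-1}u_y\in S_y$ for all $y\in V_x$.
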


\begin{proof}
    Let us write $\mathfrak{K}_\mathscr{C}(\mathcal{H})_x=\bigoplus_{j \in J_x} \mathfrak{K}(\mathcal{H}_{x,j})$ for each $x \in X$ and let $\Gamma_\mathscr{C}$ be the space $\Gamma[\mathfrak{K}(\mathcal{H})] \cap \prod_{x \in X}\mathfrak{K}_\mathscr{C}(\mathcal{H})_x$. In order to prove the proposition, it is enough to check that for every $x \in X$, any element of $\mathfrak{K}_\mathscr{C}(\mathcal{H})_x$ can be obtained as the evaluation at $x$ of an element of $\Gamma_\mathscr{C}$. Let $x \in X$ and $a \in \mathfrak{K}_\mathscr{C}(\mathcal{H})_{x}$. By considering a local fixation of $\mathscr{C}$, one can find an open neighborhood $V$ of $x$ and a continuous vector field $b$ of $\mathfrak{K}(\mathcal{H})_{|V}$ such that $b_{x} = a$ and $b_{y} \in \mathfrak{K}_\mathscr{C}(\mathcal{H})_y$ for all $y \in V$. Let $\varphi$ be a continuous function on $V$ with compact support such that $\varphi(x) = 1$. Extending $\varphi b$ by zero outside $V$, we get an element of $ \Gamma_\mathscr{C}$ whose evaluation at $x$ is $a$.
\end{proof}

The continuous field of C*-algebras defined by the above proposition will be denoted by $\mathfrak{K}_\mathscr{C}(\mathcal{H})$. Considering the latter as a C*-algebra, one can easily determine its irreducible representations.

\begin{lemma}\label{irrepsK} The collection of all the representations $$\mathrm{pr}_j \circ \mathrm{ev}_x: \mathfrak{K}_\mathscr{C}(\mathcal{H}) \to \bigoplus_{l \in J_x} \mathfrak{K}(\mathcal{H}_{x,l}) \to \mathfrak{K}(\mathcal{H}_{x,j}) \qquad (x \in X, j\in J_x),$$
where $\mathrm{ev}_x$ denotes the evaluation at $x$ and $\mathrm{pr}_j$ the projection on the component indexed by $j$, is a complete set of representatives of the spectrum of $\mathfrak{K}_\mathscr{C}(\mathcal{H})$.
\end{lemma}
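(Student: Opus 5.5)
We rely on the general description, recalled above, of the spectrum of the C*-algebra of a continuous field. The field $\mathfrak{K}_\mathscr{C}(\mathcal{H})$ is a continuous field of C*-algebras over $X$. Every irreducible representation of its C*-algebra of sections vanishing at infinity therefore has the form $\pi\circ \mathrm{ev}_x$, where $x\in X$ and $\pi$ is an irreducible representation of the fiber $\mathfrak{K}_\mathscr{C}(\mathcal{H})_x=\bigoplus_{j\in J_x}\mathfrak{K}(\mathcal{H}_{x,j})$. Moreover, two such representations $\pi\circ\mathrm{ev}_x$ and $\pi'\circ\mathrm{ev}_{x'}$ are equivalent only if $x=x'$ and $\pi\simeq\pi'$. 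Indeed, $\mathrm{ev}_x$ is surjective, and the kernel of $\pi\circ \mathrm{ev}_x$ contains $C_0(X\setminus\{x\})\cdot\mathfrak{K}_\mathscr{C}(\mathcal{H})$, which recovers $x$ through the central action of $C_0(X)$. This reduces the lemma to describing the spectrum of each fiber.

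Next I would determine the spectrum of a $c_0$-direct sum $D=\bigoplus_{j\in J}\mathfrak{K}(\mathcal{H}_j)$ of elementary C*-algebras. Let $\pi$ be an irreducible, hence nonzero, representation of $D$. The summands $\mathfrak{K}(\mathcal{H}_j)$ are mutually orthogonal ideals whose algebraic sum is dense in $D$, so $\pi$ is nonzero on at least one of them, say $\mathfrak{K}(\mathcal{H}_{j_0})$. For $j\neq j_0$, the essential subspaces $\overline{\pi(\mathfrak{K}(\mathcal{H}_j))H}$ and $\overline{\pi(\mathfrak{K}(\mathcal{H}_{j_0}))H}$ are orthogonal invariant subspaces. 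By irreducibility, the second one is all of $H$, so $\pi$ vanishes on every other summand. Thus $\pi$ factors through $\mathrm{pr}_{j_0}$ and restricts to an irreducible representation of $\mathfrak{K}(\mathcal{H}_{j_0})$. Such a representation is unitarily equivalent to the identity representation, by the classical fact that every irreducible representation of an elementary C*-algebra is equivalent to its defining one (Dixmier, Cor.~4.1.5). Conversely, each $\mathrm{pr}_j$ is irreducible.

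Finally, I would check that distinct indices give inequivalent representations. If $j\neq j'$, then $\mathrm{pr}_j$ and $\mathrm{pr}_{j'}$ have different kernels, so they are not equivalent. Combining this with the first step, the pairs $(x,j)$ index the spectrum bijectively. The only delicate point is the claim in the first step that the spectrum of the field algebra is the disjoint union of the fiber spectra. Since the paper already recalls this as a general fact about continuous fields, I would simply cite it, with Dixmier §10, rather than reprove it.
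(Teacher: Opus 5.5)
Your proof is correct and follows the same route as the paper. You identify the spectrum of $\mathfrak{K}_\mathscr{C}(\mathcal{H})$ with the disjoint union of the fiber spectra, then note that the irreducible representations of the $c_0$-sum $\bigoplus_{j\in J_x}\mathfrak{K}(\mathcal{H}_{x,j})$ are exactly its actions on the spaces $\mathcal{H}_{x,j}$; you just fill in the details the paper leaves implicit (the orthogonal essential subspaces, Dixmier's Corollary 4.1.5, and the central-character argument separating distinct base points).
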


\begin{proof}
    The spectrum of $\mathfrak{K}_\mathscr{C}(\mathcal{H})$ is the disjoint union of the spectra of its fibers. For every $x \in X$, the irreducible representations of $\bigoplus_{j \in J_x} \mathfrak{K}(\mathcal{H}_{x,j})$, the fiber of $\mathfrak{K}_\mathscr{C}(\mathcal{H})$ over $x$, are up to equivalence its actions on the spaces $\mathcal{H}_{x,j}$, as $j$ ranges over $J_x$.
\end{proof}

\subsection{The generalized Stone-Weierstrass theorem}
The main tool that we use to determine the explicit form of the C*-algebras involved in our two-parameter deformation is a generalization of the Stone-Weierstrass theorem for postliminal \cite{Dixmier}*{§4.3.1} C*-algebras. Since we will invoke it several times, we state it here for convenience.

\begin{theorem} \label{StoneW}
    Let $\varphi : A \to B$ be a morphism of C*-algebras, with $B$ postliminal. Assume that
    \begin{enumerate}[label = (\roman*)]
        \item for every irreducible representation $\pi$ of $B$, the pullback representation $\varphi^*\pi$ of $A$ is irreducible,
        \item the induced map $\varphi^* : \hat{B} \to \hat {A}$ between the spectra of both algebras is bijective.
    \end{enumerate}
    Then $\varphi$ is an isomorphism.
\end{theorem}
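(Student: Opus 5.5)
The plan is to show separately that $\varphi$ is injective and that $\varphi$ is surjective. Injectivity and closedness of the range are routine; surjectivity rests on Dixmier's Stone--Weierstrass theorem for postliminal algebras.

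\emph{Injectivity.} If $\varphi(a)=0$, then $\pi(\varphi(a))=0$ for every irreducible representation $\pi$ of $B$. By (ii), every irreducible representation of $A$ is equivalent to some $\varphi^*\pi$, so $\rho(a)=0$ for all irreducible $\rho$ of $A$. Irreducible representations separate points of a C*-algebra, hence $a=0$. An injective $\ast$-morphism is isometric, so $C=\varphi(A)$ is a C*-subalgebra of $B$ and $\varphi : A\to C$ is an isomorphism.

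\emph{Reduction of surjectivity to Stone--Weierstrass.} It remains to prove $C=B$. Since $B$ is postliminal, Dixmier's generalized Stone--Weierstrass theorem for postliminal algebras applies (\cite{Dixmier}*{§11.1}, Glimm's form). A C*-subalgebra $C\subset B$ equals $B$ as soon as the following hold:
\begin{enumerate}[label=(\alph*)]
\item for every irreducible $\pi$ of $B$, the restriction $\pi_{|C}$ is irreducible;
\item for inequivalent irreducible $\pi_1,\pi_2$ of $B$, the restrictions $\pi_{1|C}$ and $\pi_{2|C}$ are inequivalent;
\item no irreducible $\pi$ of $B$ vanishes on $C$, and more generally $C$ separates pure states in the weak-$*$ closure, including the zero functional.
\end{enumerate}
Each of these is verified through the identification $\pi_{|C}\circ\varphi=\varphi^*\pi$. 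Hypothesis (i) gives (a) and also (c), since an irreducible representation is nonzero. For (b), if $\pi_{1|C}\simeq\pi_{2|C}$, then $\varphi^*\pi_1\simeq\varphi^*\pi_2$, and injectivity in (ii) forces $\pi_1\simeq\pi_2$. Pure states of $B$ are vector states of irreducible representations, so these representation-theoretic conditions translate into the pure-state separation hypotheses of the theorem.

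\emph{Main obstacle.} The delicate point is matching the exact hypotheses of the postliminal Stone--Weierstrass theorem as stated in Dixmier. That statement is phrased as separation of $P(B)\cup\{0\}$ by $C$, up to the equivalence relation induced by unitary equivalence of GNS representations. One must check that two pure states with equal restrictions to $C$ give the same point of $\hat B$, and then come from the same vector class.

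I would first try to cite Dixmier's Theorem 11.1.8 / Corollary 11.1.6 in the form for postliminal (type I) algebras: a subalgebra whose restriction map $\hat B\to\hat C$ is injective and preserves irreducibility is all of $B$. If that form is not directly available, I would fall back on the type I Stone--Weierstrass theorem via Glimm: separation of $\hat B$ together with irreducibility of restrictions suffices, because for a type I algebra each irreducible image contains the compacts. Kaplansky's density argument on each irreducible fiber would then show $\pi(C)=\pi(B)$. Combined with injectivity of $\hat B\to\hat C$, this gives $C=B$.
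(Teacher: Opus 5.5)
Your main argument is correct and matches the paper, which simply combines two results from Dixmier: injectivity holds because irreducible representations separate points and, by (ii), every irreducible representation of $A$ is some $\varphi^*\pi$; then the Stone--Weierstrass theorem for postliminal algebras applies to the C*-subalgebra $\varphi(A)\subset B$, with irreducibility of restrictions coming from (i) and separation of inequivalent irreducibles coming from injectivity of $\varphi^*$. The fallback in your last paragraph is unnecessary and would not work as written: for non-liminal $B$, irreducibility of $\pi_{|C}$ does not force $\pi(C)=\pi(B)$ (take $C=\mathfrak{K}(\ell^2(\mathbb{N}))+\mathbb{C}1$ inside the Toeplitz algebra), and getting from fiberwise information plus injectivity of $\hat B\to\hat C$ to $C=B$ is precisely the nontrivial content of Glimm's theorem, so the citation cannot be bypassed this cheaply.
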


This version is a combination of \cite{Dixmier}*{§2.7.3} and \cite{Dixmier}*{§11.1.6}. In the following, we actually use this theorem only for liminal C*-algebras \cite{Dixmier}*{§4.2.1}. By definition, the latter are the C*-algebras which act on their irreducible representations by compact operators. In particular, the continuous field of C*-algebras $\mathfrak{K}_\mathscr{C}(\mathcal{H})$ above are liminal.

As an application of the above theorem, let us describe the Jacobson topology of the spectrum of $\mathfrak{K}_\mathscr{C}(\mathcal{H})$, for $\mathcal{H}$ a locally trivial continuous field of Hilbert spaces over a locally compact space $X$ and a constraint $\mathscr{C} = (\mathcal{H}_{x,j})_{x \in X, j \in J_x}$ on it. Let us write $J = \sqcup_{x \in X}J_x$. For simplicity, for any $x \in X$ and $j \in J_x$, we write $\mathcal{H}_j = \mathcal{H}_{x,j}$ and we denote by $\pi_j$ the irreducible representation of $\mathfrak{K}_\mathscr{C}(\mathcal{H})$ on $\mathcal{H}_j$ from Lemma \ref{irrepsK}. For any local fixation $(u,V)$ of $\mathscr{C}$, every $x \in V$ and $j \in J_x$, we define
$$J(u,V)_{x,j} = \bigsqcup_{y\in V} \{j' \in J_y : u_x(\mathcal{H}_j)\subset u_y(\mathcal{H}_{j'})\}.$$
\begin{lemma}
    There is a unique topology on $J$ such that for every $x \in X$ and $j \in J_x$, the family of all the subsets of the form $J(u,V)_{x,j}$, for $(u,V)$ a local fixation of $\mathscr{C}$ such that $x \in V$, is a base of neighborhood of $j$.
\end{lemma}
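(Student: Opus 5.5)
We will use the standard criterion for a neighbourhood base: given, for each point $j \in J$, a nonempty family $\mathcal{N}(j)$ of subsets, there is a unique topology in which $\mathcal{N}(j)$ is a neighbourhood base at $j$ if and only if (N1) $j \in N$ for all $N \in \mathcal{N}(j)$; (N2) for $N_1, N_2 \in \mathcal{N}(j)$ there is $N_3 \in \mathcal{N}(j)$ with $N_3 \subset N_1 \cap N_2$; (N3) for every $N \in \mathcal{N}(j)$ there is $N' \in \mathcal{N}(j)$ such that for each $j' \in N'$ some member of $\mathcal{N}(j')$ lies in $N$. Uniqueness is then automatic: the open sets must be exactly those $O$ containing a member of $\mathcal{N}(j)$ for each $j\in O$. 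Take $\mathcal{N}(j)$, for $j \in J_x$, to be the family of sets $J(u,V)_{x,j}$ with $(u,V)$ a local fixation and $x \in V$. It is nonempty because $\mathscr{C}$ is a constraint, so $X$ is covered by domains of local fixations. (N1) holds because $u_x(\mathcal{H}_j)\subset u_x(\mathcal{H}_j)$.

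For (N3), first note that restricting a local fixation $(u,V)$ to an open $W\subset V$ gives a local fixation $(u_{|W},W)$ with $J(u_{|W},W)_{x,j}\subset J(u,V)_{x,j}$. Given $N=J(u,V)_{x,j}$, I will take $N'=J(u_{|V_x},V_x)_{x,j}$, with $V_x$ as in \eqref{semicontinuity}. Let $j'\in N'$ with $j' \in J_y$, $y\in V_x$, so $u_x(\mathcal{H}_j)\subset u_y(\mathcal{H}_{j'})$. Consider $J(u,V)_{y,j'}$ itself. If $j''\in J_z$ lies in it, then $u_y(\mathcal{H}_{j'})\subset u_z(\mathcal{H}_{j''})$, hence $u_x(\mathcal{H}_j)\subset u_z(\mathcal{H}_{j''})$, so $j''\in N$. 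Thus (N3) reduces to transitivity of inclusion, and we do not even need the shrinking here. It is still worth checking that \eqref{semicontinuity} is not required for this step.

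The main obstacle is (N2), since two different trivializations $u$ and $u'$ near $x$ are involved. The plan is as follows. Given $(u,V)$ and $(u',V')$ containing $x$, set $W=V\cap V'$ and $w=u'_{|W}$, and compare with $u_{|W}$ via $g_y=u'_y u_y^{-1}$. This $g$ is a continuous family of unitaries on the constant field, that is, strongly continuous. The natural candidate is $N_3=J(u,W'')_{x,j}$ for a small neighbourhood $W''$ of $x$, so we must show the following. Suppose $y \in W''$ and $u_x(\mathcal{H}_j)\subset u_y(\mathcal{H}_{j'})$. Then $u'_x(\mathcal{H}_j)\subset u'_y(\mathcal{H}_{j'})$, that is, $g_x u_x(\mathcal{H}_j)\subset g_y u_y(\mathcal{H}_{j'})$.

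To get this, I will use the semicontinuity property of both fixations. Shrinking to $W''\subset V_x\cap V'_x$, for each $y$ and $j'\in J_y$ some block of $x$ maps inside $\mathcal{H}_{j'}$ under both $u$ and $u'$. Orthogonality and density of the decomposition at $x$ then force each block $u_y(\mathcal{H}_{j'})$ to be the closed span of the images $u_x(\mathcal{H}_{i})$ it contains. The same holds for $u'$. It follows that the set $S(j')$ of blocks at $x$ contained in $j'$ is the same set for both trivializations. Indeed, both sets partition $J_x$ according to the blocks of $y$: each block at $x$ lies in exactly one block at $y$ under $u$, and likewise under $u'$. The block $\mathcal{H}_{j'}$ is fixed, and I will identify the index sets via the dimensions/positions and the density argument. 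If this identification turns out to be delicate, the fallback is to define $N_3$ using $u$ restricted to $W''$ and verify the inclusion $J(u,W'')_{x,j}\subset J(u',V')_{x,j}$ directly. To do so, I would argue that $j\in S(j')$ for $u$ implies $u_y(\mathcal{H}_{j'})\supset u_x(\mathcal{H}_j)$, which combined with $u'$-semicontinuity and the orthogonality of the blocks at $y$ gives the $u'$ inclusion.
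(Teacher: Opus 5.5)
Your framework is the same as the paper's. Your (N3) is the paper's claim (B), and as you observe it is just transitivity of inclusion. Your (N2) is the paper's claim (A): that $J(u_{|W''},W'')_{x,j}\subset J(u',V')_{x,j}$ for some neighbourhood $W''$ of $x$. Your (N1) and (N3) are fine. Your argument for (N2), however, has a genuine gap.

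\textbf{Continuity is never used.} You record that $g_y=u'_yu_y^{-1}$ is strongly continuous but then rely only on \eqref{semicontinuity}, orthogonality and density, and that cannot work. Even when every block at $x$ lies in some block at $y$ for both fixations, the two fixations may distribute the blocks at $x$ differently. Take $\mathbb{C}^2$ decomposed into the coordinate lines at both $x$ and $y$, with $u=\mathrm{id}$, $u'_x=\mathrm{id}$, and $u'_y$ the swap of the two lines. Then $S(j')$ depends on the fixation, so no argument via ``dimensions/positions and density'' can identify the two groupings. Only the proximity of $y$ to $x$, through continuity, rules this out. Your fallback has the same defect: the condition on $u'$ at best tells you that $u'_x(\mathcal{H}_j)$ lies in some block $u'_y(\mathcal{H}_{j''})$, not that $j''=j'$.

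\textbf{The refinement claim does not follow.} You assert that orthogonality and density force each $u_y(\mathcal{H}_{j'})$ to be the closed span of the $u_x(\mathcal{H}_i)$ it contains. This does not follow from \eqref{semicontinuity} as written, which only says that each block at $y$ contains some block at $x$. Take $u=\mathrm{id}$ on $\mathbb{C}^4$, with the coordinate lines at $x$ and the blocks $\mathrm{span}(e_1,e_3+e_4)$ and $\mathrm{span}(e_2,e_3-e_4)$ at $y$. The condition holds, yet $\mathbb{C}e_3$ lies in no block. What is actually needed is the refinement property: each $u_x(\mathcal{H}_{x,i})$ lies in a single $u_y(\mathcal{H}_{y,j'})$ for $y\in V_x$. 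The paper's step ``not orthogonal, hence contained'' uses this implicitly, and it holds for the constraint used later. You should take it as the meaning of the definition rather than try to derive it.

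\textbf{The missing step} is the paper's $N_\xi$ argument. Fix a nonzero $\xi\in\mathcal{H}_{x,j}$ and put $N(y)=\langle u'_x\xi,\,u'_yu_y^{-1}u_x\xi\rangle$.
\begin{itemize}
\item By strong continuity of $g$, the function $N$ is continuous, and $N(x)=\Vert\xi\Vert^2\neq0$. So $N$ does not vanish on some open $W''\subset V\cap V'\cap V'_x$ containing $x$.
\item Let $y\in W''$ with $u_x(\mathcal{H}_j)\subset u_y(\mathcal{H}_{j'})$. Then $u_y^{-1}u_x\xi\in\mathcal{H}_{j'}$, so $u'_yu_y^{-1}u_x\xi\in u'_y(\mathcal{H}_{j'})$. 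Hence $u'_x(\mathcal{H}_j)$ is not orthogonal to $u'_y(\mathcal{H}_{j'})$.
\item By refinement for $u'$, the space $u'_x(\mathcal{H}_j)$ lies in a single block at $y$. Distinct blocks at $y$ are orthogonal, so $u'_x(\mathcal{H}_j)\subset u'_y(\mathcal{H}_{j'})$.
\end{itemize}
Hence $J(u_{|W''},W'')_{x,j}\subset J(u,V)_{x,j}\cap J(u',V')_{x,j}$, which is (N2). This uses only the refinement property of $u'$ and the continuity of $g$, not the condition on $u$.
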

\begin{proof}
    Let us fix $x \in X$ and $j\in J_x$. It is enough to check that (A) if $(u,V)$ and $(u',V')$ are local fixations such that $x \in V\cap V'$, then there exists an open subset $W \subset V\cap V'$ cotaining $x$ such that $J(u'_{|W},W)_{x,j}\subset J(u,V)_{x,j}$ ca and (B) for every local fixation $(u,V)$ such that $x \in V$, there exists an open subset $W \subset V$ containing $x$ such that for every $y \in W$ and $j'\in J(u,V)_{x,j}$, we have $J(u_{|W},W)_{y,j'} \subset J(u,V)_{x,j}$. The claim (B) is a straightforward consequence of the definition of local fixations. Let us prove the assertion (A). Without loss of generality, we can assume that $V = V'$, that $\mathcal{H}_{|V}$ is constant and that $u'$ is the identity. This way $(u_x)_{y \in V}$ is just a strongly continuous family of unitaries. Let $V_x$ be an open subset of $V$ satisfying condition (\ref{semicontinuity}) for both $u$ and $u'$. If $\xi$ is a non-zero element $\mathcal{H}_j$ then the functions $N_\xi :y \mapsto \langle u_x\xi, u_y\xi\rangle$ is continuous, so there exists an open subset $W \subset V_x$ on which $N_\xi$ does not vanish at all. Next, for every $y \in W$ and $j' \in J_y$ such that $\mathcal{H}_j \subset \mathcal{H}_{j'}$, the spaces $u_x(\mathcal{H}_j)$ and $u_y(\mathcal{H}_{j'})$ are not orthogonal, hence the first one is contained in the second one. We thus have $J(u'_{|W}, W)_{x,j} \subset J(u,V)_{x,j}$.
\end{proof}
Let us endow $J$ with the topology defined by the above lemma.
\begin{proposition} For any closed ideal $I$ of $\mathfrak{K}_\mathscr{C}(\mathcal{H})$, we define
$$Z(I) = \{j \in J : \forall \sigma \in I, \,\pi_j(\sigma) = 0\}.$$
If $F$ is a closed subset of $J$, let us write $$\mathscr{I}(F) = \{\sigma \in \mathfrak{K}_\mathscr{C}(\mathcal{H}) : \forall j \in F,\, \pi_j(\sigma) = 0 \}.$$ Then $I \mapsto Z(I)$ establishes a one-to-one correspondence between closed ideals of $\mathfrak{K}_\mathscr{C}(\mathcal{H})$ and closed subsets of $J$. The inverse is given by $F \mapsto \mathscr{I}(F)$.
\end{proposition}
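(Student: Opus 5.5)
The plan is to identify the spectrum of $A=\mathfrak{K}_\mathscr{C}(\mathcal{H})$ with $J$ via Lemma \ref{irrepsK}, show that the Jacobson topology coincides with the topology on $J$ given by the previous lemma, and then invoke the general fact that closed ideals of a C*-algebra correspond bijectively to closed subsets of its spectrum through $I \mapsto \{\pi : \pi(I)=0\}$ and $F\mapsto \bigcap_{\pi\in F}\ker\pi$. Since $A$ is liminal, hence postliminal, distinct irreducible representations have distinct kernels, so $\hat A=\mathrm{Prim}(A)$. Lemma \ref{irrepsK} makes $j\mapsto \pi_j$ a bijection $J\to\hat A$, and under it $Z$ and $\mathscr{I}$ are exactly the hull and kernel maps. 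It therefore suffices to prove that this bijection is a homeomorphism, i.e.\ that for $F\subset J$ and $j\in J$, we have $\ker\pi_j\supset \mathscr{I}(F)$ if and only if $j\in\overline{F}$.

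For the direction $j\notin \overline F\Rightarrow \ker\pi_j\not\supset\mathscr{I}(F)$: fix $x$ with $j\in J_x$ and a local fixation $(u,V)$ with $x\in V$ and $J(u,V)_{x,j}\cap F=\emptyset$. Shrinking $V$ as in the definition of local fixations and using the trivialization $u$, take a unit vector $\xi\in\mathcal{H}_j$ and consider the section $\sigma_y=\varphi(y)\,u_y^{-1}P\,u_y$, where $P$ is the rank-one projection onto $u_x\xi$ and $\varphi\in C_c(V_x)$ satisfies $\varphi(x)=1$. Condition (\ref{semicontinuity}) gives $\sigma_y\in\bigoplus_{j'}\mathfrak{K}(\mathcal{H}_{y,j'})$, and in fact $\sigma_y$ is supported on the unique $j'\in J_y$ with $u_x(\mathcal{H}_j)\subset u_y(\mathcal{H}_{j'})$, namely an element of $J(u,V)_{x,j}$. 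Hence $\pi_{j'}(\sigma)=0$ for all $j'\in F$, while $\pi_j(\sigma)\neq0$. Continuity of $\sigma$ follows as in the proof of the previous proposition.

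For the converse, $j\in\overline F\Rightarrow \mathscr{I}(F)\subset\ker\pi_j$: let $\sigma\in\mathscr{I}(F)$ and suppose $\pi_j(\sigma)\ne0$. Choose a unit vector $\xi\in\mathcal{H}_j$ with $\|\sigma_x\xi\|>\delta>0$. In a local trivialization, $y\mapsto u_y\sigma_y u_y^{-1}$ is norm continuous, so near $x$ we have $\|\sigma_y u_y^{-1}u_x\xi\|>\delta$. Every neighborhood $J(u,W)_{x,j}$ meets $F$ in some $j'\in J_y$ with $y\in W$, and for such $j'$ the vector $u_y^{-1}u_x\xi$ lies in $\mathcal{H}_{y,j'}$. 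Since $\sigma_y$ preserves each block and $\pi_{j'}(\sigma)=0$, we get $\sigma_y u_y^{-1}u_x\xi=0$, a contradiction.

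The main obstacle is the precise matching of the hull–kernel closure with the neighborhood base $J(u,V)_{x,j}$. In particular, the step where the single block $j'$ containing $u_x(\mathcal{H}_j)$ carries the test vector uses both (\ref{semicontinuity}) and the orthogonality argument from the previous lemma, and these must be handled carefully. The remaining steps are routine.
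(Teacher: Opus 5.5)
Your proof is correct, but it runs in the opposite direction from the paper's. You first show that $j\mapsto\pi_j$ is a homeomorphism $J\to\hat A$ (the paper's Corollary \ref{Jacobson}) by checking that $\mathscr{I}(F)\subset\ker\pi_j$ if and only if $j\in\overline F$. You then read off the proposition from the general hull--kernel correspondence, which rests on the standard fact that every closed ideal of a C*-algebra is the intersection of the primitive ideals containing it. The paper instead proves the three statements directly ($Z(I)$ is closed, $\mathscr{I}(Z(I))=I$, and $Z(\mathscr{I}(F))=F$) and only afterwards deduces the corollary.

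The local arguments are the same in both. Your argument for $j\in\overline F\Rightarrow\mathscr{I}(F)\subset\ker\pi_j$ is the paper's proof that $Z(I)$ is closed: norm continuity of $u_y\sigma_y u_y^{-1}$ forces $\pi_{j'}(\sigma)\neq0$ on a whole neighborhood $J(u,W)_{x,j}$. Your rank-one section $\varphi(y)\,u_y^{-1}Pu_y$ is exactly the $\sigma$ constructed in the paper's last step.

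The real difference is how the equality $\mathscr{I}(Z(I))=I$ is obtained. The paper applies the generalized Stone--Weierstrass theorem (Theorem \ref{StoneW}) to the inclusion $I\subset\mathscr{I}(Z(I))$, which requires liminality. You get this equality from elementary ideal theory of C*-algebras and need only the bijection of Lemma \ref{irrepsK}; your appeal to $\hat A=\mathrm{Prim}(A)$ is harmless but not even needed. Your route is lighter and yields the proposition and the corollary at once. The paper's route keeps everything phrased through the Stone--Weierstrass tool it uses repeatedly elsewhere.

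One small remark: you read (\ref{semicontinuity}) as saying that each block $u_x(\mathcal{H}_{x,j})$ lies inside a (necessarily unique) block $u_y(\mathcal{H}_{y,j'})$. The displayed condition literally quantifies over the blocks at $y$ instead. However, the paper's own arguments use your reading too: the existence of $\mathfrak{K}_\mathscr{C}(\mathcal{H})$, the neighborhood lemma, and the third step of this proof. So this is not a gap relative to the paper.
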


\begin{proof}
    Let us check first that if $I$ is a closed ideal of $\mathfrak{K}_\mathscr{C}(\mathcal{H})$, then $Z(I)$ is closed in~$J$. Let $x \in X$ and $j \in J_x \setminus Z(I)$. Let $\sigma \in I$ such that $\pi_j(\sigma) \neq 0$. By continuity of~$\sigma$, there exists a local fixation $(u,V \ni x)$ such that for any $y \in V$, the restriction of $u_y\sigma_y u_y^*$ to $u_x(\mathcal{H}_j)$ is non-zero. Therefore, for any $j' \in J(u,V)_{x,j}$, we have $\pi_{j'}(\sigma) \neq 0$. So $Z(I)$ is closed.

    Now, let us prove that $\mathscr{I}(Z(I)) = I$. A complete set of representatives of the spectrum of $I$ is given by $\{\pi_j : j \in J \setminus Z(I)\}$. Since every irreducible representation of $I$ has a unique extension into an irreducible representation of $\mathscr{I}(Z(I))$ and the latter vanishes on $Z(I)$, the set $\{\pi_j : j \in J \setminus Z(I)\}$ is also a complete set of representatives for the spectrum of $\mathscr{I}(Z(I))$. Because $\mathfrak{K}_\mathscr{C}(\mathcal{H})$ is liminal, so is $\mathscr{I}(Z(I))$. Then, by applying the generalized Stone-Weierstrass theorem, we get that $\mathscr{I}(Z(I)) = I$.

    Finally, we show that $Z(\mathscr{I}(F)) = F$ for any closed $F \subset J$. Let $x \in X$ and $j \in J_x \setminus F$. Let us consider a neighborhood $J(u,V)_{x,j}$ of $j$ which is contained in $J\setminus F$. By restricting $V$ if necessary, we can assume that $V = V_x$ in the notation of (\ref{semicontinuity}). Then, there exists a continuous vector field $\sigma$ of $\mathfrak{K}_\mathscr{C}(\mathcal{H})$ vanishing on $X\setminus V$, such that $\pi_{j}(\sigma) \neq0$ and
    $$(u_y\sigma_yu_y^*)_{|u_x(\mathcal{H}_{j})^\perp} = 0 \qquad (y \in V).$$
    We have $\pi_{j'}(\sigma) = 0$ for all $j' \in J \setminus J(u,V)_{x,j}$, therefore $\sigma \in \mathscr{I}(F)$. This shows that $Z(\mathscr{I}(F)) \subset F$. The converse inclusion is straightforward.
\end{proof}

\begin{corollary} \label{Jacobson}
    The map $j \mapsto \pi_j$ induces a homeomorphism from $J$ onto the spectrum of $\mathfrak{K}_\mathscr{C}(\mathcal{H})$, the latter being equipped with the Jacobson topology.
\end{corollary}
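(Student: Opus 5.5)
The map $j \mapsto \pi_j$ is already a bijection from $J$ onto the spectrum $\widehat{\mathfrak{K}_\mathscr{C}(\mathcal{H})}$ by Lemma \ref{irrepsK}. So the only thing to prove is that it is a homeomorphism for the Jacobson topology. The plan is to compare closed sets on both sides, using the preceding proposition as the key input.

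Recall that the closed subsets of the spectrum of a C*-algebra $A$ are exactly the hulls of closed ideals. For each closed ideal $I$ of $A$, the associated closed set is $\{\pi \in \hat A : \pi(I) = 0\}$, and every closed set is of this form. Transporting through the bijection $j \mapsto \pi_j$, the closed set attached to $I$ corresponds to $\{j \in J : \pi_j(I) = 0\} = Z(I)$. Therefore the image of the Jacobson topology on $J$ has as closed sets precisely the family $\{Z(I) : I \text{ closed ideal}\}$.

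By the preceding proposition, $Z(I)$ is closed in $J$ for every closed ideal $I$. Hence every Jacobson-closed set pulls back to a closed subset of $J$, which shows that $j \mapsto \pi_j$ is continuous. Conversely, the proposition also gives that every closed $F \subset J$ satisfies $F = Z(\mathscr{I}(F))$. So every closed subset of $J$ is the image of a Jacobson-closed set, which shows that the inverse map is continuous as well. The two families of closed sets therefore coincide, and the bijection is a homeomorphism.

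There is no real obstacle here: all the substantive work, namely the closedness of $Z(I)$ and the identity $Z\circ\mathscr{I} = \mathrm{id}$, is contained in the preceding proposition. The only point needing care is the standard description of the closed sets of the Jacobson topology as hulls of closed ideals, and the observation that $\pi_j(I) = 0$ is exactly the condition defining $Z(I)$. The correspondence $I \leftrightarrow Z(I)$ being bijective, which also uses $\mathscr{I}\circ Z = \mathrm{id}$, is not even needed for the homeomorphism claim itself.
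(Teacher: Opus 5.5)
Your proposal is correct and follows the paper's approach: the corollary is stated without a separate proof, as an immediate consequence of Lemma \ref{irrepsK} (which gives the bijection) and the preceding proposition. As in your argument, continuity of $j \mapsto \pi_j$ comes from the closedness of $Z(I)$, and continuity of the inverse comes from $F = Z(\mathscr{I}(F))$, once the Jacobson-closed sets are identified with hulls of closed ideals.
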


\subsection{Representation theory of crossed products} The full C*-algebras of the motion group $G_0$ and its groupoid analogue $G_0^\star$ (see the introduction) are examples of crossed products of a commutative C*-algebra by a compact group. We conclude this section by a well-known description of the spectrum of such algebras. For a complete exposition on crossed product, see \cite{Dana} for instance.

We fix a commutative C*-algebra $A$, a compact group $K$ and an action $\alpha$ of $K$ on $A$ by automorphisms. Let $K \ltimes A$ be the associated crossed product C*-algebra. We already stress that the latter coincides with the C*-algebra of the transformation groupoid $\mathrm{Sp}A \curvearrowleft K$ consisting of the right action of $K$ induced by $\alpha$ on the spectrum of $A$, see \cite{Renault}*{p.59}.

Consider a point $x \in \mathrm{Sp}A$ and $\pi : \Gamma \to \mathfrak{B}(\mathcal{H})$ a unitary representation of a closed subgroup $\Gamma$ of the stabilizer $K_x$ of $x$. Let us write
$$L(K,\mathcal{H})^{\Gamma} = \{\psi \in L^2(K,\mathcal{H}) : \forall (\gamma,k) \in \Gamma \times K, \; \psi(\gamma k) = \pi(\gamma)\psi(k) \}.$$
We endow $L(K,\mathcal{H})^{\Gamma}$ with the right regular action of $K$ and the action of $A$ defined as follows:
$$\forall (a,\psi) \in A \times L(K,\mathcal{H})^{\Gamma}, \forall k \in K, \quad (a \psi)(k) = a(xk)\psi(k).$$
These actions form a covariant representation of the dynamical system $(A, K,\alpha)$ and thus induce a representation of $K \ltimes A$ on $L(K,\mathcal{H})^{\Gamma}$ that we denote by $\mathrm{Ind}(x,\pi)$, or $\mathrm{Ind}(x,\Gamma , \pi)$ when we want $\Gamma$ to be explicit.

\begin{remark}\label{InductionStages}
    Let us consider the same situation as above and $\Gamma'$ another closed subgroup of $K$ fixing $x$. Assuming that $\Gamma'$ contains $\Gamma$, induction in stages implies that $\mathrm{Ind}(x,\Gamma , \pi)$ and $\mathrm{Ind}(x, \Gamma', \mathrm{Ind}_\Gamma^{\Gamma'}(\pi))$ are canonically equivalent.
\end{remark}

\begin{theorem}\label{CrossedProduct} The following holds.

\begin{enumerate}[label = (\roman*)] 
    \item For every $x \in \mathrm{Sp}A$ and every irreducible unitary representation $\pi$ of $K_x$, the representation $\mathrm{Ind}(x,\pi)$ of $K \ltimes A$ is irreducible.
    \item Two such representations $\mathrm{Ind}(x,\pi)$ and $\mathrm{Ind}(x',\pi')$ are equivalent if and only if there exists $k \in K$ such that $x'=xk$ and $\pi$ is equivalent to $\pi'(k^{-1}\cdot k)$.
    \item Any irreducible representation of $A$ is equivalent to one of the form $\mathrm{Ind}(x,\pi)$ for $x \in \mathrm{Sp}A$ and $\pi$ an irreducible unitary representation of $K_x$.
\end{enumerate}
\end{theorem}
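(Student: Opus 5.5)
The plan is to reduce everything to the Mackey machine for a compact group acting on a locally compact space, using the fact noted above that $K\ltimes A \cong C^*(\mathrm{Sp}A \rtimes K)$. Write $X=\mathrm{Sp}A$. A representation of $K\ltimes A$ is a covariant pair $(\rho,U)$, with $\rho$ a nondegenerate representation of $A=C_0(X)$ and $U$ a unitary representation of $K$. The first step is to restrict an irreducible covariant pair to $A$. By the spectral theorem, $\rho$ comes with a projection-valued measure $P$ on $X$, and covariance makes $P$ $K$-equivariant. Because $K$ is compact, the orbits are closed and $X/K$ is Hausdorff; in particular the orbit space is countably separated. So, by the standard ergodicity argument, irreducibility forces $P$ to be concentrated on a single orbit $xK$, which is homeomorphic to $K_x\backslash K$. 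This is the key step for (iii). To prove it, I would push $P$ forward to $X/K$. Since $X/K$ is Hausdorff, $C_0(X/K)=A^K$ sits in the centre of the multiplier algebra of $K\ltimes A$, so it acts by scalars on any irreducible representation, i.e. by evaluation at one orbit. This route avoids measure-theoretic ergodicity entirely.

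Once the representation lives on the orbit $xK \cong K_x\backslash K$, it factors through $C(K_x\backslash K)\rtimes K$. By Green's imprimitivity theorem, equivalently Mackey's imprimitivity theorem for the transitive system of imprimitivity $(K,K_x\backslash K)$, this algebra is Morita equivalent to $C^*(K_x)$. Concretely, every covariant representation on $K_x\backslash K$ is induced from a unitary representation $\pi$ of $K_x$. The induced space is exactly $L(K,\mathcal H)^{K_x}$ with the actions given above, where $a$ acts through evaluation at $xk$. This identifies the representation with $\mathrm{Ind}(x,\pi)$.

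The imprimitivity correspondence is an equivalence of representation categories, so it preserves irreducibility and unitary equivalence. Hence $\mathrm{Ind}(x,\pi)$ is irreducible if and only if $\pi$ is, which gives (i), and (iii) follows from the previous paragraph. For (ii), note first that by the central-character argument, equivalent representations must have the same orbit, so $x'=xk$ for some $k\in K$. Next, conjugation by $k$ identifies $K_{x'}$ with $k^{-1}K_xk$. The map $\psi\mapsto \psi(k\,\cdot)$ intertwines $\mathrm{Ind}(x,\pi)$ with the representation induced at $xk$ from the conjugated representation. After that, injectivity of the imprimitivity correspondence at a fixed point gives the stated criterion: $\pi$ equivalent to $\pi'(k^{-1}\cdot k)$.

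The main obstacle is the reduction to a single orbit together with a clean invocation of imprimitivity. I would cite Williams' \emph{Crossed products of C*-algebras} (\cite{Dana}) for Green's theorem $C(K/H)\rtimes K\sim_M C^*(H)$. Then I would only verify by hand that the abstract induced representation matches the concrete model $L(K,\mathcal H)^{K_x}$, including the right-translation conventions in (ii).
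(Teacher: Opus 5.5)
Your proposal is correct. The paper gives no argument of its own for this theorem; it only refers to Monk--Voigt (\S 4.1), so there is nothing internal to compare against. What you give is a self-contained version of the standard Mackey--Green machine for a compact group acting on a commutative C*-algebra.

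Choosing the central-character argument over the ergodic-measure argument is the right call. It uses only that $\mathrm{Sp}A/K$ is Hausdorff, which is automatic for compact $K$, and it needs no separability. The projection-valued-measure route would bring in separability hypotheses to make ``countably separated'' do its work.

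When writing it up, state two routine points explicitly:
\begin{enumerate}[label = (\arabic*)]
    \item The scalar character of $C_0(\mathrm{Sp}A/K)\subset ZM(K\ltimes A)$ on an irreducible representation is nonzero. This holds because $C_0(\mathrm{Sp}A/K)\cdot A$ is dense in $A$: apply Urysohn on $\mathrm{Sp}A/K$, using that $SK$ is compact whenever $S$ is.
    \item If this character is evaluation at the orbit $xK$, the representation kills $K\ltimes C_0(\mathrm{Sp}A\setminus xK)$. It therefore factors through $K\ltimes C(xK)$, with $xK\cong K_x\backslash K$ as right $K$-spaces; this uses compactness again, via a continuous bijection from a compact space to a Hausdorff one.
\end{enumerate}

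Item (iii) of the statement should read ``irreducible representation of $K\ltimes A$'', which is how you interpreted it.

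Your treatment of (ii) is correct. We have $K_{xk}=k^{-1}K_xk$. The map $\psi\mapsto\psi(k\,\cdot)$ carries $\mathrm{Ind}(x,\pi)$ onto $\mathrm{Ind}(xk,\pi(k\cdot k^{-1}))$: it respects the $A$-action because $(a\psi)(k g)=a(xkg)\psi(kg)$, and it commutes with right translation. Combined with injectivity of Rieffel induction at the fixed point $xk$, this gives exactly $\pi\simeq\pi'(k^{-1}\cdot k)$.
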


A proof of these statements can be found in \cite{MonkVoigt}*{§4.1}.

\section{Deformed reduced C*-algebras of $\mathrm{SL}(2,\mathbb{R})$ and their duals}

Our goal now is to introduce the convolution algebras involved in our two-parameter deformation of $\mathrm{SL}(2,\mathbb{R})$ and recall their representation theory. In the next section, we will construct a continuous field of C*-algebras $C^*_r(\mathbf{G})$ defined over $\mathbb{R}^*_+ \times \mathbb{R}$ whose fibers will be, depending on the base point, the reduced C*-algebra of $G$, the group C*-algebra of its Cartan motion group $G_0$, one of the $q$-deformed reduced C*-algebras $C^*_{q,r}(G)$ mentioned in the introduction, or the groupoid C*-algebra $C^*(G^\star_0)$. Here, we describe explicitly the spectrum of these C*-algebras. In order to characterize the corresponding irreducible representations, it will be convenient to express how certain finitely generated algebras of unbounded multipliers act on them. For $C^*_r(G)$ and $C^*(G_0)$, these algebras will be the enveloping $\ast$-algebras of the Lie algebras of $G$ and $G_0$ respectively.

\begin{convention}
    Let $A$ be any $\ast$-algebra. An inner product $(\cdot |\cdot)$ on an $A$-module $V$ is said to be $A$-invariant if $(w |av) = (a^*w | v)$ for all $v,w \in V$ and $a \in A$. By a unitary $A$-module, we mean an $A$-module endowed with a fixed $A$-invariant inner product. We call an $A$-module unitarizable if it admits an $A$-invariant inner product.

We will use specific notations for certain subsets of $\mathbb{Z}$. First, the set of even and odd integer are denoted by $\mathbb{Z}^\mathrm{even}$ and $\mathbb{Z}^\mathrm{odd}$ respectively. If $\varepsilon \in \{-1,1\}$ then we write
$\mathbb{Z}^\varepsilon$ for set of integers having the same parity as $(\varepsilon -1)/2$. For any $\lambda \in \mathbb{Z}$, the notation 
$\mathbb{Z}_{>\lambda}$ (respectively $\mathbb{Z}_{\geq\lambda}$, $\mathbb{Z}_{<\lambda}$, $\mathbb{Z}_{\leq\lambda}$) refers to the set of integers $n$ such that $n >\lambda$ (respectively $n \geq \lambda$, $n <\lambda$, $n \leq\lambda$). We will often combine these notations, so that, for example, $\mathbb{Z}^\varepsilon_{>\lambda}$ means $\mathbb{Z}^\varepsilon\cap \mathbb{Z}_{>\lambda}$.

The space of regular functions on $K$ (the $K$-finite functions) is denoted by $\mathcal{O}(K)$. We define the following element of $i\mathfrak{k}$:
\begin{equation}\label{thetadef} \theta = \begin{pmatrix}
    0 & i \\ -i & 0
\end{pmatrix}.\end{equation}
The family $(\zeta_n)_{n \in \mathbb{Z}}$ of characters of $K$ defined by
$$\zeta_n(e^{it\theta}) = e^{int} \qquad (t \in \mathbb{R}),$$
is a basis of $\mathcal{O}(K)$. For each $\varepsilon\in \{-1,1\}$, we denote by $\mathcal{O}^\varepsilon(K)$ the space of regular functions having the same parity as $(\varepsilon -1)/2$, that is, $\mathrm{span}(\zeta_n : n \in \mathbb{Z}^\varepsilon)$.
\end{convention}

\subsection{Representation theory of $\mathrm{SL}(2,\mathbb{R})$} We begin by recalling some aspects of the representation theory of $G$. Since this is discussed in many textbooks, for example \cites{Vogan, Wallach}, we will essentially focus on fixing notation for future use.

The Hecke algebra of the pair $(\mathfrak{g},K)$, defined in \cite{KnappVoganInd}*{§I.3}, is denoted by $R(\mathfrak{g},K)$. It is the convolution algebra of $K$-finite distributions on $G$ supported in $K$. As such, it is naturally endowed with a $\ast$-structure. The category of non-degenerate (or approximately unital) $R(\mathfrak{g},K)$-modules corresponds to the category of $(\mathfrak{g},K)$-modules, see \cite{KnappVoganInd}*{Theorem 1.117}. Accordingly, a $(\mathfrak{g},K)$-module is said to be unitary or unitarizable if the associated $R(\mathfrak{g},K)$-module is.

The unitary representation theory of $G$ essentially reduces to the study of the $(\mathfrak{g},K)$-modules. As an illustration of this principle, there is a one-one correspondence between irreducible unitary representations of $G$ and unitarizable simple $(\mathfrak{g},K)$-modules. It is defined as follows: if $V$ is a simple unitary $(\mathfrak{g},K)$-module then its Hilbert completion $\bar{V}$ is an irreducible representation of $G$ and $V$ corresponds to the space $K$-finite vectors of $\bar{V}$, see \cite{Baldoni}*{§3}.

The main way to produce interesting representations of $G$ is parabolic induction. Let $(E,F,H)$ be the canonical basis of $\mathfrak{g}$. Consider the minimal parabolic subgroup $P$ of $G$ with Lie algebra $\mathfrak{p} = \mathrm{span}(H,E)$. Let $R(\mathfrak{p}, M)$ be the Hecke algebra relative to the pair $(\mathfrak{p},M)$, where $M = K\cap P = \{-1,1\}$. If $V$ is a $(\mathfrak{p},M)$-module then let us denote by $I_{\mathfrak{p},M}^{\mathfrak{g},K}(V)$ the $(\mathfrak{g},K)$-module given by the $K$-finite part of $\mathrm{Hom}_{\mathfrak{p},M}(R(\mathfrak{g},K), V)$, see \cite{KnappVoganInd}*{§II.1}. The one-dimensional $(\mathfrak{p},M)$-modules are given by the family of characters $(\chi_{\varepsilon,\lambda})_{\varepsilon = \pm 1, \lambda \in \mathbb{C}}$ of $R(\mathfrak{p},M)$ characterized by
$$\chi_{\varepsilon,\lambda}(-1 \in M) = \varepsilon, \qquad \chi_{\varepsilon,\lambda}(H) = \lambda.$$ If $V$ is a $(\mathfrak{p},M)$-module then $I_{\mathfrak{p},M}^{\mathfrak{g},K}(V\otimes \chi_{1,1})$ is called the $(\mathfrak{g},K)$-module parabolically induced from $V$, we simply denote it by $\mathrm{ind}(V)$. When $V$ is the $M$-finite part of an admissible unitary representation $\bar{V}$ of $L$, $\mathrm{ind}(V)$ identifies with the $K$-finite part of the induced representation $\mathrm{Ind}_P^G(\bar{V})$.

For every $(\varepsilon, \lambda) \in \{-1,1\}\times \mathbb{C}$, let us write $\mathrm{ind}(\varepsilon,\lambda)$ as a short hand for $\mathrm{ind}(\chi_{\varepsilon,\lambda})$. In compact picture, $\mathrm{ind}(\varepsilon,\lambda)$ identifies with $\mathcal{O}^\varepsilon(K)$, equipped with the right regular action of $K$. The action of $\mathfrak{g}$ on $\mathrm{ind}(\varepsilon,\lambda)$ is given by
\begin{gather}\label{actioninducedclassical}
\begin{gathered}
        \theta \zeta_n = n \zeta_n, \\(H\mp i(E+F))\zeta_n = (\lambda +1 \pm n) \zeta_n,\end{gathered}
\end{gather}
where one can note that $\theta = i(E-F)$. The $(\mathfrak{g},K)$-module $\mathrm{ind}(\varepsilon, \lambda)$ is simple unless $\lambda \in\mathbb{Z}^{-\varepsilon}$. If $\lambda \in \mathbb{Z}_{\geq 0}^{-\varepsilon}$, then $\mathrm{ind}(\varepsilon, \lambda)$ has exactly two simple submodules, that are $D^+(\lambda) = \mathrm{span}(\zeta_n :  n \in \mathbb{Z}^\varepsilon_{>\lambda})$ and $D^-(\lambda) = \mathrm{span}(\zeta_n :  n \in \mathbb{Z}^\varepsilon_{<-\lambda})$.

Let us now describe the spectrum of $C^*_r(G)$, which coincides with the tempered dual of $G$ (this a general fact concerning semisimple Lie groups \cite{L2matrixcoef}).

\begin{proposition}
    The tempered simple $(\mathfrak{g},K)$-modules consist of:
\begin{itemize}
    \item the even unitary principal series $\mathrm{ind}(1,\lambda)$, $\lambda \in i\mathbb{R}_+$,
    \item the irreducible odd principal series $\mathrm{ind}(-1,\lambda)$, $\lambda \in i\mathbb{R}_+\setminus\{0\}$,
    \item the discrete series and their limits, which are the simple submodules of $\mathrm{ind}(\varepsilon, n)$ for $\varepsilon \in \{-1,1\}$ and $n \in \mathbb{Z}_{\geq 0}^\varepsilon$.
\end{itemize}
Each of these modules has, up to a scalar, a unique invariant inner product. The corresponding completions form a complete set of representatives of the tempered dual of $G$.
\end{proposition}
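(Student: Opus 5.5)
The plan is to reduce everything to explicit computations with the formulas (\ref{actioninducedclassical}), using two standard inputs from the literature: the Langlands/Knapp--Zuckerman description of the tempered dual, and Harish-Chandra's classification of the discrete series of $G$ (see \cites{Vogan, Wallach}).

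\textbf{Step 1: reduction to constituents of $\mathrm{ind}(\varepsilon,\lambda)$.} By the general theory, every irreducible tempered representation of $G$ is either square integrable or a constituent of a representation unitarily induced from a tempered representation of $MA$. For $G=\mathrm{SL}(2,\mathbb{R})$ this means a constituent of $\mathrm{Ind}_P^G(\chi_{\varepsilon,\lambda})$ with $\lambda\in i\mathbb{R}$. Hence the tempered simple $(\mathfrak{g},K)$-modules are the discrete series together with the simple subquotients of $\mathrm{ind}(\varepsilon,\lambda)$ for $\lambda\in i\mathbb{R}$.

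\textbf{Step 2: composition structure and discrete series.} Using (\ref{actioninducedclassical}), the operators $H\mp i(E+F)$ shift the $K$-weight $n$ by $\pm2$, with coefficient $\lambda+1\pm n$. Such a coefficient vanishes exactly when $\lambda\in\mathbb{Z}^{-\varepsilon}$. So for $\lambda\in i\mathbb{R}$ the module $\mathrm{ind}(\varepsilon,\lambda)$ is simple, except when $(\varepsilon,\lambda)=(-1,0)$, where it is the direct sum of the limits of discrete series $D^+(0)\oplus D^-(0)$.

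For the discrete series, Harish-Chandra's classification says they are the modules with minimal $K$-type $\pm(n+1)$, $n\geq1$. These are precisely the submodules $D^\pm(n)$ of $\mathrm{ind}(\varepsilon,n)$ with $n\in\mathbb{Z}^{\varepsilon}_{\geq 1}$. One can check directly that their $K$-finite matrix coefficients decay fast enough to lie in $L^2(G)$.

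\textbf{Step 3: equivalences.} Two modules in the list are compared through three invariants:
\begin{itemize}
\item the set of $K$-types, which is $\mathbb{Z}^\varepsilon$ for principal series and a half-line for $D^\pm(n)$;
\item the infinitesimal character, given by the value $\lambda^2-1$ of the Casimir;
\item the parity $\varepsilon$.
\end{itemize}
Together these separate all modules except $\mathrm{ind}(\varepsilon,\lambda)$ and $\mathrm{ind}(\varepsilon,-\lambda)$. Those two are isomorphic via the standard intertwining operator, which can also be written explicitly as a diagonal rescaling of the $\zeta_n$. This explains the restriction $\lambda\in i\mathbb{R}_+$.

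\textbf{Step 4: invariant inner products and completions.} Every weight space is one-dimensional, so an invariant inner product is diagonal in $(\zeta_n)$, say $(\zeta_n|\zeta_n)=c_n>0$. Using $E+F$ skew-adjoint, $\theta$ self-adjoint and $H$ skew-adjoint, invariance forces a relation of the form
$$c_{n+2}\,\overline{(\lambda+1+n)}\ =\ \pm\, c_n\,(\lambda+1+n+2)\quad\text{(up to sign conventions).}$$
This relation determines each $c_n$ from a single one. Uniqueness up to scalar follows, since the module is simple and the weights connect all $K$-types in it.

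Existence also follows from this relation:
\begin{itemize}
\item for $\lambda\in i\mathbb{R}$, the choice $c_n=1$ works, which is just the $L^2(K)$ inner product;
\item for $D^\pm(n)$, the ratios $c_{m+2}/c_m$ are positive, so the $c_m$ can all be chosen positive.
\end{itemize}
Finally, the correspondence between unitarizable simple $(\mathfrak{g},K)$-modules and irreducible unitary representations recalled above (\cite{Baldoni}) turns the list into a complete set of representatives of the tempered dual, i.e.\ of the spectrum of $C^*_r(G)$ by \cite{L2matrixcoef}.

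\textbf{Main obstacle.} The hard part is Step 1, together with the completeness half of Step 2: showing that no other unitary modules are tempered, in particular excluding the complementary series and the trivial representation. I would not reprove this. I would invoke the Knapp--Zuckerman classification, or equivalently Harish-Chandra's Plancherel theorem for $\mathrm{SL}(2,\mathbb{R})$, and keep only the explicit verifications of Steps 2--4.
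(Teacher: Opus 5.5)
Your proposal is correct and follows the same route as the paper. The paper gives no argument of its own for this statement: it defers to the standard classification in the textbooks cited just before it, and to the identification of the tempered dual with the spectrum of $C^*_r(G)$. Your Steps 2--4 only make explicit the routine checks behind that citation. Two slips should be corrected, though neither changes the conclusions you draw.

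First, the invariance relation in Step 4 is misstated. We have $(H-i(E+F))^*=-(H+i(E+F))$, and $H+i(E+F)$ acts on $\zeta_{n+2}$ by the \emph{lowering} coefficient $\lambda+1-(n+2)$. The relation therefore reads
\[
\overline{(\lambda+1+n)}\,c_{n+2}=(n+1-\lambda)\,c_n .
\]
Your right-hand side uses the raising coefficient $\lambda+n+3$ instead, and with your version $c_n=1$ would not be invariant for $\lambda\in i\mathbb{R}$. The correct relation gives what you assert:
\begin{itemize}
\item $c_{n+2}=c_n$ when $\lambda\in i\mathbb{R}$, away from the split point $\lambda=0$, $n=-1$ of $\mathrm{ind}(-1,0)$;
\item $c_{m+2}/c_m=(m+1-n)/(m+1+n)>0$ for consecutive $K$-types $m,m+2$ of $D^\pm(n)$.
\end{itemize}

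Second, $D^\pm(n)$ is a submodule of $\mathrm{ind}(\varepsilon,n)$ for $n\in\mathbb{Z}^{-\varepsilon}$, not $n\in\mathbb{Z}^{\varepsilon}$. Your own reducibility criterion $\lambda\in\mathbb{Z}^{-\varepsilon}$ shows this. The statement as printed carries the same parity typo; compare the sentence introducing $D^\pm(\lambda)$ just before the proposition.
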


\subsection{The unitary dual of the motion group}
Let us continue this introductory section with the explicit description of the unitary irreducible representations of the motion group $G_0$. We recall that the latter is the semidirect product $K \ltimes (\mathfrak{g}/\mathfrak{k})$, where $\mathfrak{g}/\mathfrak{k}$ is considered as an abelian Lie group and is equipped with the adjoint action of $K$. The full C*-algebra of $G_0$, which coincides with its reduced C*-algebra because $K$ is amenable, identifies with the crossed product $K \ltimes C^*(\mathfrak{g}/\mathfrak{k})$. The classification of the unitary irreducible representations of $G_0$ hence derives from Theorem \ref{CrossedProduct}. In order to make it explicit, we just determine the orbits and stabilizers of the spectrum of $C^*(\mathfrak{g}/\mathfrak{k})$ under the action of $K$.

We denote by $\mathfrak{g}_0$ the Lie algebra of $G_0$. Let us consider $\mathfrak{g}/\mathfrak{k}$ as its own Lie algebra, so that $\mathfrak{g}/\mathfrak{k} \subset \mathfrak{g}_0$. We introduce the following elements of $U(\mathfrak{g}/\mathfrak{k})$:
$$\delta X = H/2 \, \mathrm{mod} \,\mathfrak{k}, \qquad \delta Z = i(E \,\mathrm{mod}\, \mathfrak{k}).$$
The reasons for this notation will become clear later. Let us consider $K$ as embedded into $G_0$, so that $\theta \in i\mathfrak{k} \subset U(\mathfrak{g}_0)$. Then one can realize $U(\mathfrak{g}_0)$ as the universal unital $\ast$-algebra generated by $\theta$, $\delta X$, $\delta Z$ and subject to the relations
\begin{equation}\begin{gathered}
\theta^* = \theta, \quad \delta X^*=-\delta X, \quad \delta Z^* = \delta Z,\\
\delta X \delta Z = \delta Z \delta X, \quad \theta \delta X - \delta X\theta = -2\delta Z, \quad \theta\delta Z - \delta Z\theta = -\delta X.
\end{gathered}\end{equation}

Let us identify the spectrum of $C^*(\mathfrak{g}/\mathfrak{k})$ with the Pontryagin dual $(\mathfrak{g}/\mathfrak{k})\,\hat{}$ of $\mathfrak{g}/\mathfrak{k}$. By differentiation, the latter in turn identifies with the set of characters of the $\ast$-algebra $U(\mathfrak{g}/\mathfrak{k})$, which is in one-to-one correspondence with $\mathbb{C}$ as follows: each $\lambda \in \mathbb{C}$ corresponds to the unique character of $U(\mathfrak{g}/\mathfrak{k})$ mapping $\delta X + \delta Z$ to $\lambda$. The right action of $K$ on $(\mathfrak{g}/\mathfrak{k})\,\hat{}$, which derives from the adjoint action on $\mathfrak{g}/\mathfrak{k}$, is then given by
$$\lambda \cdot e^{it\theta} = e^{-2it}\lambda \qquad (t\in \mathbb{R}, \lambda \in \mathbb{C} = (\mathfrak{g}/\mathfrak{k})\,\hat{}\,).$$
For each $\lambda \in i\mathbb{R}_+$, let $p_\lambda$ denote the point of $\lambda /2 \in\mathrm{Sp(\mathfrak{g}/\mathfrak{k})}$. Each orbit of $(\mathfrak{g}/\mathfrak{k})\,\hat{}$ under the action of $K$ is of the form $p_\lambda K$ for a unique $\lambda \in i\mathbb{R}_+$. Moreover, the stabilizer of $p_\lambda$ is $M = \{-1,1\}$ if $\lambda \neq 0$, otherwise it is $K$. As an abelian group, $M$ is self-dual. On the other hand, the Pontryagin dual of $K$ identifies with $\mathbb{Z}$ via the pairing $(k,n) \in K \times \mathbb{Z} \mapsto \zeta_n(k)$. Applying Theorem \ref{CrossedProduct}, we get the following.

\begin{proposition} A complete set of representatives of the unitary dual of $G_0$ is
\begin{itemize}
    \item $\mathrm{Ind}(p_\lambda, M, \varepsilon)$ for $\lambda \in i\mathbb{R}_+^*$ and $\varepsilon \in \hat M=\{-1,1\}$,
    \item $\mathrm{Ind}(p_0, K, n)$ for $n \in \hat{K} = \mathbb{Z}$.
\end{itemize}
\end{proposition}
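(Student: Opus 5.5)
The plan is to apply Theorem \ref{CrossedProduct} to the crossed product $C^*(G_0) \cong K \ltimes C^*(\mathfrak{g}/\mathfrak{k})$. The spectrum of $C^*(\mathfrak{g}/\mathfrak{k})$ is identified with $(\mathfrak{g}/\mathfrak{k})\,\hat{} \cong \mathbb{C}$, with right $K$-action $\lambda \cdot e^{it\theta} = e^{-2it}\lambda$. By parts (i) and (iii) of Theorem \ref{CrossedProduct}, every irreducible representation has the form $\mathrm{Ind}(x,\pi)$, where $x \in \mathbb{C}$ and $\pi$ is an irreducible representation of the stabilizer $K_x$. By part (ii), two such representations are equivalent exactly when $x' = xk$ and $\pi \simeq \pi'(k^{-1}\cdot k)$. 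So it suffices to pick one representative in each $K$-orbit, compute its stabilizer, and classify the irreducible representations of that stabilizer up to the conjugation action of the stabilizer of the chosen point.

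First, describe the orbits. Since $e^{it\theta}$ acts by multiplication by $e^{-2it}$, and $t \mapsto e^{-2it}$ covers the whole circle, the orbit of a nonzero $\mu$ is the circle $\{|z| = |\mu|\}$. Each such circle meets the ray $\tfrac12 i\mathbb{R}_+^*$ in exactly one point, namely $p_\lambda = \lambda/2$ with $\lambda \in i\mathbb{R}_+^*$. The origin $p_0$ is fixed. Hence each orbit contains exactly one point $p_\lambda$ with $\lambda \in i\mathbb{R}_+$.

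Next, compute the stabilizers. For $\lambda \neq 0$, $e^{it\theta}$ fixes $p_\lambda$ if and only if $e^{-2it} = 1$, i.e. $t \in \pi\mathbb{Z}$. This gives $e^{it\theta} = \pm 1$, so the stabilizer is $M = \{-1,1\}$. For $\lambda = 0$ the stabilizer is all of $K$. Both stabilizers are abelian, so their irreducible representations are characters: $\hat M = \{\pm 1\}$, and $\hat K = \mathbb{Z}$ via $n \mapsto \zeta_n$. Since $K$ is abelian, conjugation by $k \in K$ acts trivially on these characters.

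Finally, combine. Taking only $x = p_\lambda$ means the $k$ in part (ii) must map $p_\lambda$ to $p_{\lambda'}$. This forces $\lambda = \lambda'$, and then $\pi \simeq \pi'$ because conjugation is trivial. The listed representations are therefore pairwise inequivalent, and by (iii) together with (ii) they exhaust the dual. No step is a genuine obstacle; the only care needed is the normalization between the character parametrization of $(\mathfrak{g}/\mathfrak{k})\,\hat{}$ and the factor $2$ in the action, which is why $\lambda/2$ appears in $p_\lambda$.
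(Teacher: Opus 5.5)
Your proposal is correct and follows the paper's own argument. Both apply Theorem~\ref{CrossedProduct} to $K \ltimes C^*(\mathfrak{g}/\mathfrak{k})$ after finding the orbit representatives $p_\lambda$, $\lambda \in i\mathbb{R}_+$, and their stabilizers ($M$ for $\lambda \neq 0$, $K$ for $\lambda = 0$), and both use that $K$ is abelian to make the conjugation condition in part (ii) vacuous.
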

Note that representations of the second kind are one-dimensional. Besides, according to Remark \ref{InductionStages}, we have $\mathrm{Ind}(p_0,M,\varepsilon) = \bigoplus_{n \in \mathbb{Z}^\varepsilon} \mathrm{Ind}(p_0, K, n)$ for each $\varepsilon \in \{-1,1\}$, where the underlying space of $\mathrm{Ind}(p_0, K, n)$ is simply $\mathbb{C}\zeta_n$.

\begin{notation}
    For any point $x \in \mathrm{Sp(\mathfrak{g}/\mathfrak{k})}$, any subgroup $\Gamma \subset K_x$ and any irreducible unitary representation $\pi$ of $\Gamma$, let us denote by $\mathrm{ind}(x,\Gamma,\pi)$ the unitary $U(\mathfrak{g}_0)$-module consisting in the space of $K$-finite vectors of $\mathrm{Ind}(x, \Gamma, \pi)$. For more information, see \cite{Warner}*{§4.5.5}.
\end{notation}

For every $\lambda \in i\mathbb{R}_+$ and $\varepsilon\in  \{-1,1\}$, the underlying space of $\mathrm{Ind}(p_\lambda,M,\varepsilon)$ is
$$L^2(K)^\varepsilon = \{ \psi \in L^2(K) : \forall k \in K, \,\psi(-k) = \varepsilon\psi(k)\},$$
so $\mathrm{ind}(p_\lambda,M,\varepsilon)$ is equal to $\mathcal{O}^\varepsilon(K)$ as a space. The explicit action of $U(\mathfrak{g}_0)$ on it is characterized by
\begin{gather}\label{actionG0} \begin{gathered}\delta T^\pm \zeta_n = \lambda\zeta_{n\pm 2} \\\theta \zeta_n = n \zeta_n\end{gathered} \quad\qquad (n \in \mathbb{Z}^\varepsilon),\end{gather}
where $\delta T^{\pm} = 2(\delta X \mp \delta Z) = \delta X - \delta X^* \mp 2\delta Z$.

\subsection{The quantization of $\mathrm{SL}(2,\mathbb{R})$}

The $q$-deformation of $G$ we are interested in was first introduced by De Commer and Dzokou Talla \cite{DCDTsl2R}. Its construction is based on the quantization of $SL(2,\mathbb{C})$, the latter considered as the classical Drinfeld double of the Poisson Lie group $\mathrm{SU}(2)$, and is inspired by Letzter's theory of quantum symmetric pairs \cite{Letzter99}. Let us introduce the associated convolution algebras and recall some aspects of their representation theory. We shall be brief, referring to \cites{DCDTsl2R, DCquantisation, YGE} for more details.

As in our previous work \cite{YGE}, we consider algebras defined over involutive extensions of $\mathbb{C}$. This generality will be used in the next section to define an algebraic version of the quantum Mackey deformation field. In what follows, $\mathbb{F}$ denotes a field extension of $\mathbb{C}$ which is equipped with a $\ast$-algebra structure. An algebra $A$ defined over $\mathbb{F}$ having a $\ast$-algebra structure is called an $(\mathbb{F},\ast)$-algebra if we have $(\lambda a)^* = \lambda^*a^*$ for every $(\lambda,a) \in \mathbb{F}\times A$. If moreover $A$ has a structure of Hopf algebra (over $\mathbb{F}$), it is said to be a Hopf $(\mathbb{F}, \ast)$-algebra if the coproduct and the counit are morphisms of $\ast$-algebras. Dual pairings or skew-pairings of Hopf $(\mathbb{F}, \ast)$-algebras are said to be unitary if they are compatible with the $\ast$-structure, see \citelist{\cite{KlimykSchmudgen}*{§1.2.7} \cite{VoigtYuncken}*{§4.2.4}}. The sumless Sweedler notation will be used freely in the following.

We fix a non-zero element $q \in \mathbb{F}$ which admits a self-adjoint square root $q^{1/2}$ and is not a root of unity. For any integer $n$, we write $[n]_q$ for $(q - q^{-1})^{-1}(q^n-q^{-n})$. Such an element of $\mathbb{F}$ is called a $q$-integer.

Consider the Drinfeld-Jimbo algebra $U_q(\mathfrak{sl_2})$ defined over $\mathbb{F}$, with deformation parameter $q$ \cite{Jantzen}*{Chapter 1}. If $E,F,k$ denote its canonical generators, there is a unique Hopf $(\mathbb{F},\ast)$-algebra structure on $U_q(\mathfrak{sl_2})$ such that $k$ is a self-adjoint grouplike element and
$$E^* = Fk, \qquad \Delta(E) = E \otimes 1 + k \otimes E.$$
With this $\ast$-structure, we denote it by ${}_\mathbb{F}U_{q}(\mathfrak{u})$. This quantized enveloping $\ast$-algebra of $\mathfrak{u}$ is dually paired with the quantized coordinate algebra ${}_\mathbb{F}\mathcal{O}_q(U)$ of $U$. The latter is defined as the unital $(\mathbb{F},\ast)$-algebra freely generated by the entries of a $2\times 2$ matrix $u$ such that
$$u^*u=uu^* = 1, \qquad u^* = \mathrm{diag}(1,q)\mathrm{adj}(u)\mathrm{diag}(1,q^{-1}),$$
where $\mathrm{adj}(u)$ denotes the adjugate matrix of $u$. We endow it with the unique Hopf $(\mathbb{F},\ast)$-algebra structure such that $\Delta(u)$ is equal to the square of $u$ in the algebra of 2$\times$2 matrices with entries in the tensor algebra of ${}_\mathbb{F}U_{q}(\mathfrak{u})$. One can show that there is a unique unitary dual pairing of Hopf $(\mathbb{F},\ast)$-algebra between ${}_\mathbb{F}U_{q}(\mathfrak{u})$ and ${}_\mathbb{F}\mathcal{O}_q(U)$ such that
$$\langle E, u\rangle  = \begin{pmatrix}
    0 & q^{1/2}\\
    0 & 0
\end{pmatrix}, \qquad \langle k, u \rangle = \begin{pmatrix}
    q & 0 \\ 0 & q^{-1}
\end{pmatrix}.$$
This bilinear map can also be seen as a unitary skew-pairing
$${}_\mathbb{F}U_{q}(\mathfrak{u})^{\mathrm{cop}} \otimes {}_\mathbb{F}\mathcal{O}_q(U) \longrightarrow \mathbb{F},$$
where ${}_\mathbb{F}U_{q}(\mathfrak{u})^{\mathrm{cop}}$ is the Hopf $(\mathbb{F},\ast)$-algebra with the same structure as ${}_\mathbb{F}U_{q}(\mathfrak{u})$ except that it is equipped with the opposite coproduct. We denote by ${}_\mathbb{F}U_{q}(\mathfrak{g}_\mathbb{C})$ the associated quantum double \citelist{\cite{KlimykSchmudgen}*{§8.2.1} \cite{VoigtYuncken}*{§2.2.4}}. Roughly, it is an Hopf $(\mathbb{F},\ast)$-algebra containing both ${}_\mathbb{F}\mathcal{O}_q(U)$ and $ {}_\mathbb{F}U_{q}(\mathfrak{u})^{\mathrm{cop}}$ as Hopf $(\mathbb{F},\ast)$-subalgebras such that
$$yx = \langle x_{(1)}, y_{(1)}\rangle x_{(2)}y_{(2)} \langle x_{(3)}, S^{-1}(y_{(3)})\rangle \qquad (x \in {}_\mathbb{F}U_{q}(\mathfrak{u})^{\mathrm{cop}}, y \in {}_\mathbb{F}\mathcal{O}_q(U)),$$
and which is universal for these properties.

Next, we denote by ${}_\mathbb{F}U_{q}(\mathfrak{k})$ the unital subalgebra of ${}_\mathbb{F}U_{q}(\mathfrak{u})$ generated by the element $\theta_q = iq^{-1/2}(E-Fk)$ and we put
$${}_\mathbb{F}\mathcal{O}_q(K\backslash U) = \{ y \in {}_\mathbb{F}\mathcal{O}_q(U) : y \triangleleft \theta_q = 0 \}.$$
where the symbol $\triangleleft$ refers to the right action of ${}_\mathbb{F}U_{q}(\mathfrak{u})$ on ${}_\mathbb{F}\mathcal{O}_q(U)$ defined by $y \triangleleft x  =\langle x, y_{(1)} \rangle y_{(2)}$. Then ${}_\mathbb{F}U_{q}(\mathfrak{k})$ and ${}_\mathbb{F}\mathcal{O}_q(K\backslash U)$ are right coideal $(\mathbb{F},\ast)$-subalgebras of ${}_\mathbb{F}U_{q}(\mathfrak{u})^\mathrm{cop}$ and ${}_\mathbb{F}\mathcal{O}_q(U)$ respectively.

\begin{definition}
    The quantized enveloping $\ast$-algebra of $\mathfrak{g}$, denoted by ${}_\mathbb{F}U_{q}(\mathfrak{g})$, is the subalgebra of ${}_\mathbb{F}U_{q}(\mathfrak{g}_\mathbb{C})$ generated by ${}_\mathbb{F}U_{q}(\mathfrak{k})$ and ${}_\mathbb{F}\mathcal{O}_q(K \backslash U)$.
\end{definition}

It is clear that ${}_\mathbb{F}U_{q}(\mathfrak{g})$ is a right coideal $(\mathbb{F},\ast)$-subalgebra of ${}_\mathbb{F}U_{q}(\mathfrak{g}_\mathbb{C})$. The latter is a quantized version of the enveloping $\ast$-algebra of $\mathfrak{g}_\mathbb{C}$ (seen as a real Lie algebra). The subalgebra ${}_\mathbb{F}U_{q}(\mathfrak{g})$ is somehow the good analogue of $U(\mathfrak{g}) \subset U(\mathfrak{g}_\mathbb{C})$ within it. This is explained with more details in \citelist{\cite{DCquantisation}*{below Definition 6.1} \cite{YGE}*{Section 2}}.

One can describe ${}_\mathbb{F}U_{q}(\mathfrak{g})$ in terms of generators and relations. First, one can show that ${}_\mathbb{F}\mathcal{O}_q(K\backslash U)$ is the universal unital $(\mathbb{F},\ast)$-algebra with generators $X_q$ and $Z_q$ subject to the following relations:
\begin{equation}\label{XZ}\begin{gathered}
    Z_q^* = Z_q, \qquad X_qZ_q = q^2Z_qX_q, \\ X_qX_q^*+q^2Z_q^2=1, \qquad X_q^*X_q+q^{-2}Z_q^2=1,
\end{gathered}
\end{equation}
Then ${}_\mathbb{F}U_{q}(\mathfrak{g})$ identifies with the universal unital $(\mathbb{F},\ast)$-algebra with generators $\theta_q$, $X_q$, $Z_q$ subject to the above relations, $\theta_q^*=\theta_q$ and
\begin{equation}\label{thetaXZ}
    \begin{gathered}
        qX_q\theta_q - q^{-1}\theta_q X_q = [2]_qZ_q, \qquad Z_q \theta_q - \theta_q Z_q = X_q-X_q^*.
    \end{gathered}
\end{equation}

One can also define analogues of the $(\mathfrak{g},K)$-modules.
\begin{definition}
    Let $V$ be a ${}_\mathbb{F}U_{q}(\mathfrak{g})$-module. We call $V$ a ${}_\mathbb{F}(\mathfrak{g},K)_q$-module if the action of $\theta_q \in {}_\mathbb{F}U_{q}(\mathfrak{k})$ is diagonalizable with $q$-integer eigenvalues.
\end{definition}

It turns out \cite{YGE}*{§5} that the ${}_\mathbb{F}(\mathfrak{g},K)_q$-modules correspond to the non-degenerate modules over a certain approximately unital $(\mathbb{F}, \ast)$-algebra ${}_\mathbb{F}R_q(\mathfrak{g},K)$, which we thus view as an analogue of the Hecke algebra $R(\mathfrak{g},K)$. It is defined as follows. Let ${}_\mathbb{F}R_q(K)$ be a $(\mathbb{F}, \ast)$-algebra having an $\mathbb{F}$-basis $(e_{q,n})_{n \in \mathbb{Z}}$ of mutually orthogonal projections. Note that ${}_\mathbb{F}R_q(K)$ acts on any ${}_\mathbb{F}(\mathfrak{g},K)_q$-module, with $e_{q,n}$ acting as the projection onto the $[n]_q$-eigenspace for each $n \in \mathbb{Z}$. Then, the multiplication of elements of ${}_\mathbb{F}R_q(K)$ and ${}_\mathbb{F}\mathcal{O}_q(K\backslash U)$ on any ${}_\mathbb{F}(\mathfrak{g},K)_q$-module obey certain universal relations. As a space, ${}_\mathbb{F}R_q(\mathfrak{g},K)$ is the tensor product ${}_\mathbb{F}R_q(K)\otimes{}_\mathbb{F}\mathcal{O}_q(K\backslash U)$. Its $(\mathbb{F}, \ast)$-algebra structure is defined according to the previous relations. We refer to \cite{YGE}*{§2} for a more rigorous definition.

There is also an analogue ${}_\mathbb{F}R_q(\mathfrak{p}, M)$ of the Hecke algebra of the pair $(\mathfrak{p},M)$ and this leads to an adaptation of the parabolic induction to our quantized context. More precisely, one can construct an induction functor ${}_\mathbb{F}\mathrm{ind}_q$ which associates a ${}_\mathbb{F}(\mathfrak{g},K)_q$-module to any ${}_\mathbb{F}R_q(\mathfrak{p}, M)$-module, see \cite{YGE}*{Definition 6.3}. It can be shown \cite{YGE}*{Proposition 6.9} that the characters of ${}_\mathbb{F}R_q(\mathfrak{p}, M)$ are naturally parametrized by $\{-1,1\} \times \mathbb{F}^*$. For any $(\varepsilon,\lambda) \in \{-1,1\} \times \mathbb{F}^*$, let us denote by ${}_\mathbb{F}\mathrm{ind}_q(\varepsilon, \lambda)$ the ${}_\mathbb{F}(\mathfrak{g},K)_q$-module induced by the one-dimensional ${}_\mathbb{F}R_q(\mathfrak{p}, M)$-module corresponding to the pair $(\varepsilon,\lambda)$. Concretely, ${}_\mathbb{F}\mathrm{ind}_q(\varepsilon, \lambda)$ can be realized as a $\lambda$-independent $\mathbb{F}$-vector space ${}_\mathbb{F}\mathcal{O}_q^\varepsilon(K)$, having a basis $(\zeta_{q,n})_{n \in \mathbb{Z}^\varepsilon}$, on which ${}_\mathbb{F}U_{q}(\mathfrak{g})$ acts as follows:
\begin{equation}\label{actionT}
    \begin{gathered}
        \theta \zeta_{q,n} = [n]_q\zeta_{q,n}, \qquad T_{q,n}\zeta_{q,n} = (\lambda + \lambda^{-1})\zeta_{q,n},\\
         T_{q,n}^\pm\zeta_{q,n}= (\lambda q^{1\pm n}-\lambda^{-1}q^{-1\mp n})\zeta_{q,n\pm 2}.
    \end{gathered}
\end{equation}
where
\begin{equation}\label{TXZ}
    T_{q,n} = q^{-1}X_q + qX_q^* + (q^n-q^{-n})Z_q, \qquad T_{q,n}^\pm = q^{\pm n}X_q - q^{\mp n}X_q^* \mp [2]_qZ_q.
\end{equation}
These induced modules are generically simple. The exceptions occur when $\lambda$ is of the form $\sigma q^n$ for $\sigma = \pm 1$ and $n \in \mathbb{Z}^{-\varepsilon}$. In the latter case, if $n\geq 0$ then ${}_\mathbb{F}\mathrm{ind}_q(\varepsilon, \sigma q^n)$ admits two simple submodules, namely ${}_\mathbb{F}D_q^\pm(\sigma,n) = \mathrm{span}_\mathbb{F}(\zeta_{q,\pm m} : m-n \in \mathbb{Z}^{\mathrm{odd}}_{>0})$, which are analogous to the discrete series representations. We refer to \cite{YGE}*{§6} for more precisions on this matter.

From now, let us restrict to the case $\mathbb{F} = \mathbb{C}$. The assumptions on $q$ amount to $q >0$ and $q \neq 1$. The explicit mention to the field in the notations will be omitted in this context. We denote the unit circle by $\mathbb{U}$ and we write
$$\mathbb{U}_+ = \{z \in \mathbb{U} : \Im z \geq 0 \}.$$

One can show that $R_q(\mathfrak{g},K)$ admits a universal enveloping C*-algebra $C^*_q(G)$ which we interpret as a $q$-analogue of the maximal group C*-algebra of $G$. Let us call a $(\mathfrak{g},K)_q$-module unitarizable or unitary if the associated $R_q(\mathfrak{g},K)$-module is. There is a one-one correspondence between the spectrum of $C^*_q(G)$ and the set of equivalence classes of unitarizable simple $(\mathfrak{g},K)_q$-modules. This correspondence is defined as follows: if $V$ is a simple unitarizable $(\mathfrak{g},K)_q$-module then it has a unique invariant inner product up to a scalar and $R_q(\mathfrak{g},K)$ acts on it through bounded operators, its completion $\bar V$ is then an irreducible representation of $C^*_q(G)$; conversely, if $\mathcal{H}$ is an irreducible representation of $C^*_q(G)$ then $R_q(\mathfrak{g},K)\mathcal{H}$ is a simple $(\mathfrak{g},K)_q$-module. We refer to \cite{YGE}*{§6} for more precisions.

Moreover, one can construct \cite{DCDTinvariant} a representation $L^2_q(G)$ of $C^*_q(G)$ which is analogous to the regular representation of $G$. We define the $q$-deformed group C*-algebra $C^*_{q,r}(G)$ as the quotient of $C^*_q(G)$ by the kernel of this representation. By restating \cite{DCinduction}*{Theorem 5.1} in light of \cite{YGE}*{§7}, we see obtain the following.

\begin{proposition} \label{Irrepsqt}
    A complete set of representatives of the spectrum of $C^*_{q,r}(G)$ is given by the completions of the following unitarizable $(\mathfrak{g},K)_q$-modules:
\begin{itemize}
    \item $\mathrm{ind}_q(\varepsilon, \lambda)$ for $\varepsilon\in \{-1,1\}$ and $\lambda \in \mathbb{U}_+$ such that $(\varepsilon, \lambda) \neq (-1,\pm 1)$,
    \item $D_q^{+}(\sigma,n)$ and $D_q^{-}(\sigma,n)$ for $(\sigma, n) \in \{-1,1\} \times \mathbb{Z}_{\geq 0}$.
\end{itemize}
\end{proposition}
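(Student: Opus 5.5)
The plan is to obtain the statement by translating \cite{DCinduction}*{Theorem 5.1}, which describes the spectrum of $C^*_{q,r}(G)$ in the language of De Commer's induced representations, into the language of $(\mathfrak{g},K)_q$-modules through the dictionary of \cite{YGE}*{§7}.

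\textbf{Step 1: reduce to classifying modules.} Since $C^*_{q,r}(G)$ is a quotient of $C^*_q(G)$, its spectrum is a subset of the spectrum of $C^*_q(G)$. By the correspondence recalled above, points of the latter are in bijection with unitarizable simple $(\mathfrak{g},K)_q$-modules $V$, via $V\mapsto \bar V$. It therefore suffices to list the unitarizable simple $(\mathfrak{g},K)_q$-modules, which \cite{YGE}*{§6--7} classifies as induced modules ${}\mathrm{ind}_q(\varepsilon,\lambda)$, their simple subquotients $D^\pm_q(\sigma,n)$, and complementary-series type modules. We then determine which of these are weakly contained in $L^2_q(G)$.

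\textbf{Step 2: match the two parametrizations.} De Commer's Theorem 5.1 presents the reduced dual as consisting of:
\begin{itemize}
\item unitary principal series, obtained by inducing unitary characters of the quantum Borel/parabolic;
\item the discrete series, which appear as the atoms of the Plancherel measure.
\end{itemize}
For each unitary induction datum, I would compare the $K_q$-types (the $[n]_q$-eigenspaces of $\theta_q$) and the action of $T_{q,n}$ and $T^\pm_{q,n}$ from (\ref{actionT}). This identifies the datum with $\mathrm{ind}_q(\varepsilon,\lambda)$ for $|\lambda|=1$. The invariance of the inner product forces $\lambda^{*}=\lambda^{-1}$, since $T_{q,n}$ is self-adjoint and acts by $\lambda+\lambda^{-1}$. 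The same comparison shows that $(\varepsilon,\lambda)$ and $(\varepsilon,\lambda^{-1})$ give equivalent modules, via the intertwiner; this explains the restriction to $\lambda\in\mathbb{U}_+$.

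\textbf{Step 3: handle the reducible points.} The reducibility criterion says $\mathrm{ind}_q(\varepsilon,\lambda)$ fails to be simple exactly when $\lambda=\sigma q^n$ with $n\in\mathbb{Z}^{-\varepsilon}$. On the unit circle this happens only for $n=0$, which requires $\varepsilon=-1$ and $\lambda=\pm1$. For these two values the module splits as $D^+_q(\pm1,0)\oplus D^-_q(\pm1,0)$; these are the limits of discrete series, which is why $(-1,\pm1)$ is excluded from the first family. The discrete series $D^\pm_q(\sigma,n)$ for $n\ge1$ are then read off from the $K_q$-types of De Commer's square-integrable representations. The minimal $K_q$-type $\pm(n+1)$ together with the sign $\sigma$, which records the eigenvalue of $T_{q,m}$ on the extremal vector, pins them down.

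\textbf{Step 4: check irreducibility and non-redundancy.} The modules in the list are pairwise inequivalent: different families are separated by their $K_q$-type supports, and within the principal series the eigenvalue $\lambda+\lambda^{-1}$ of $T_{q,n}$ distinguishes points of $\mathbb{U}_+$. Finally, complementary series and trivial-type modules do not occur, because De Commer's theorem excludes them from the support of the Plancherel measure.

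\textbf{Main obstacle.} I expect the hardest part to be Step 2, namely aligning conventions precisely: De Commer's induction parameters versus the characters of $R_q(\mathfrak{p},M)$ from \cite{YGE}*{Proposition 6.9}, including the shift by $\chi_{1,1}$ and the sign $\sigma$ distinguishing the two discrete series families. There are no classical analogues of these for $q\neq1$. To settle them I would compute both sides explicitly on the lowest $K_q$-types and compare the eigenvalue of $T_{q,n}$.
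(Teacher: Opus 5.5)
Your plan matches the paper, which obtains this proposition simply by restating \cite{DCinduction}*{Theorem 5.1} through the dictionary of \cite{YGE}*{Section 7}, so, as you propose, the substance is carried by those two citations. One small correction: $T_{q,n}$ itself is not self-adjoint, only its compression $e_{q,n}T_{q,n}e_{q,n}$ is, and that only gives $\lambda+\lambda^{-1}\in\mathbb{R}$ (which real $\lambda$, e.g.\ complementary-series parameters, also satisfy), so $|\lambda|=1$ has to come from the unitarity of the inducing data in De Commer's theorem rather than from that argument.
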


For any $(n,m) \in \mathbb{Z}^2$, let us write
\begin{equation}\label{Tnm}
    T_{q,n}^{m} =  e_{q,m}T_{q,n}e_{q,n}, \qquad T_{q,n}^{m,\pm} =  e_{q,m}T_{q,n}^\pm e_{q,n},
\end{equation}
understood as elements of $R_q(\mathfrak{g},K)$. Later, we will need the following basic result.

\begin{lemma} \label{generatorsR_q} The $\ast$-algebra $R_q(\mathfrak{g},K)$ is generated by the collection of elements $T_{q,n}^{n}$, $T_{q,n}^{n+2,+}$, $T_{q,n}^{n-2,-}$, as $n$ ranges over $\mathbb{Z}$.
\end{lemma}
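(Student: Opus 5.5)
The plan is to use the vector space decomposition $R_q(\mathfrak{g},K) = R_q(K)\otimes \mathcal{O}_q(K\backslash U)$. Under it, $R_q(\mathfrak{g},K)$ is spanned by elements $e_{q,m}\,y\,e_{q,n}$ with $y \in \mathcal{O}_q(K\backslash U)$ and $m,n\in\mathbb{Z}$. Let $\mathcal{A}$ denote the $\ast$-subalgebra generated by the listed elements. It then suffices to prove two things: that $e_{q,n}\in\mathcal{A}$ for every $n$, and that $e_{q,m}\,y\,e_{q,n}\in\mathcal{A}$ for every monomial $y$ in $X_q, X_q^*, Z_q$.

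\textbf{Step 1: the projections $e_{q,n}$ and the degree-one elements.} From \eqref{TXZ} we can invert the linear relations between $X_q, X_q^*, Z_q$ and the combinations $T_{q,n}, T_{q,n}^{+}, T_{q,n}^{-}$. These three combinations are linearly independent, because their coefficient matrix has nonzero determinant for $q\neq 1$; this is a routine computation to check. Consequently $e_{q,m}X_q e_{q,n}$, $e_{q,m}X_q^* e_{q,n}$ and $e_{q,m}Z_q e_{q,n}$ are linear combinations of $T^{m}_{q,n}$, $T^{m,\pm}_{q,n}$.

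We also need the weight rule. Since $\theta_q$ acts on $X_q,Z_q$ through \eqref{thetaXZ}, the elements $e_{q,m}\,y\,e_{q,n}$ with $y$ of degree one vanish unless $|m-n|\in\{0,2\}$. This is the relation already built into $R_q(\mathfrak{g},K)$, as the action \eqref{actionT} illustrates. Moreover, $T^{m}_{q,n}=0$ unless $m=n$, and $T^{m,\pm}_{q,n}=0$ unless $m=n\pm2$. Therefore every nonzero degree-one piece lies in the linear span of the generators and their adjoints. Here $(T^{n+2,+}_{q,n})^*$ is, up to a scalar, $T^{n,-}_{q,n+2}$, and these adjoints also cover the cases $m=n\pm 2$ with the "wrong" sign.

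For the projections, we use the relation $X_qX_q^*+q^2Z_q^2=1$. It gives
$$e_{q,n} = \sum_{m} e_{q,n}X_qe_{q,m}\,e_{q,m}X_q^*e_{q,n} + q^2\sum_m e_{q,n}Z_qe_{q,m}\,e_{q,m}Z_qe_{q,n},$$
where the sums are finite, with $m\in\{n,n\pm2\}$. Each factor lies in $\mathcal{A}$ by the previous paragraph, so $e_{q,n}\in\mathcal{A}$.

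\textbf{Step 2: monomials.} Take a monomial $y=y_1\cdots y_k$ in the generators $X_q, X_q^*, Z_q$. Inserting $1=\sum_l e_{q,l}$ between consecutive factors is legitimate in the Hecke algebra, because only finitely many terms survive by the weight rule. This yields
$$e_{q,m}\,y\,e_{q,n}=\sum e_{q,m}y_1e_{q,l_1}\,y_2\cdots e_{q,l_{k-1}}y_ke_{q,n},$$
a product of degree-one pieces, each of which lies in $\mathcal{A}$. Together with the spanning property, this gives $\mathcal{A}=R_q(\mathfrak{g},K)$.

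The main obstacle is making precise that $e_{q,m}\,y\,e_{q,n}$ really vanishes outside $|m-n|\le 2$ for degree-one $y$, and that this insertion of projections is valid inside $R_q(\mathfrak{g},K)$ rather than only in modules. I would justify this through the universal relations defining $R_q(\mathfrak{g},K)$ in \cite{YGE}*{§2}. Alternatively, one can use the faithfulness of the family of $(\mathfrak{g},K)_q$-modules, where \eqref{thetaXZ} shows that $X_q, X_q^*, Z_q$ map the $[n]_q$-eigenspace into the sum of the $[n]_q$- and $[n\pm2]_q$-eigenspaces.
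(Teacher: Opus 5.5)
Your proposal is correct and follows the paper's proof. As there, you reduce via the approximate unit $\sum_{|l|\le N}e_{q,l}$ to the pieces $e_{q,m}X_qe_{q,n}$ and $e_{q,m}Z_qe_{q,n}$, invert \eqref{TXZ}, and discard the elements $T^{m}_{q,n}$, $T^{m,\pm}_{q,n}$ lying off their diagonals; you also usefully make explicit that $e_{q,n}$ itself is recovered from $X_qX_q^*+q^2Z_q^2=1$, which the paper leaves implicit. The only real difference is how that vanishing is justified. The paper embeds $R_q(\mathfrak{g},K)$ into $C^*_q(G)$ and uses that every unitarizable simple module sits inside some $\mathrm{ind}_q(\varepsilon,\lambda)$, where \eqref{actionT} applies. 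Your route through \eqref{thetaXZ} and the faithfulness of $(\mathfrak{g},K)_q$-modules also works, but you must check the sharper fact that $T_{q,n}$ and $T^{\pm}_{q,n}$ map the $[n]_q$-eigenspace of $\theta_q$ into the $[n]_q$- and $[n\pm 2]_q$-eigenspaces respectively. This is a short computation, and it is needed because the coarse rule $|m-n|\in\{0,2\}$ alone does not kill $T^{n+2}_{q,n}$ or $T^{n+2,-}_{q,n}$.
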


\begin{proof}
    First, note that $X_{q,n}^{m} = e_{q,m}X_qe_{q,n}$ and $Z_{q,n}^{m} = e_{q,m}Z_qe_{q,n}$, as $(n,m)$ ranges over $\mathbb{Z}^2$, generate $R_q(\mathfrak{g},K)$ as a $\ast$-algebra. This is a straightforward consequence of the two following observations: the family $(\sum_{|l|\leq m} e_{q,l})_{m\in \mathbb{N}}$ is an approximate unit of $R_q(\mathfrak{g},K)$ and $\{X_q,Z_q\}$ generates $\mathcal{O}_q(K\backslash U)$ as a $\ast$-algebra. Next, by inverting the formulas (\ref{TXZ}), one can express $X_{q,n}^{m}$ and $Z_{q,n}^{m}$ as linear combinations of $T_{q,n}^{m}$ and $T_{q,n}^{m,\pm}$ for every $(n,m) \in \mathbb{Z}^2$, so the latter generate the $\ast$-algebra $R_q(\mathfrak{g},K)$.
    
    To conclude, just note that $T_{q,n}^{m}$, $T_{q,n}^{m,+}$, $T_{q,n}^{m,-}$ vanish if $m\neq n$, $m\neq n+2$, $m\neq n-2$ respectively. Indeed, $R_q(\mathfrak{g},K)$ embeds into $C^*_q(G)$ and every unitarizable simple $(\mathfrak{g},K)_q$-module can be embedded into an induced module $\mathrm{ind}_q(\varepsilon, \lambda)$ for some $(\varepsilon, \lambda)\in \{-1,1\}\times \mathbb{C}$, on which $T_{q,n}^{m}, \,T_{q,n}^{m,\pm}$ vanish under the above conditions on $n$ and $m$ in view of (\ref{actionT}). \end{proof}

\subsection{The quantization of the motion group}

Let $G_0^\star$ be the transformation Lie groupoid (see \cite{Renault}*{§1.2.a}) of the right action of $K$ on $K \backslash U$ and let $b$ be the base point of $K \backslash U$. It is an amenable groupoid and its C*-algebra $C^*(G_0^\star)$ coincides with the crossed product $K \ltimes C(K\backslash U)$. By comparison, the group C*-algebra of the motion group $G_0$ is naturally isomorphic to $K \ltimes C_0((\mathfrak{g}/\mathfrak{k})\,\hat{})$. As we will see, as the Mackey parameter approaches $0$, the family of quantizations of $G$ converges (at the C*-algebraic level) to the deformation to the normal cone \cite{DebordSkandalis} induced by the embedding $K \hookrightarrow G_0^\star$, where $K$ is identified with the subgroup(oid) of elements of $G_0^\star$ having source and range equal to the base point $b$ of $K\backslash U$. The representation theoretic analogy between $G$ and $G_0$ can then be extended to the $q$-deformations of $G$, provided that we replace $G_0$ by $G_0^\star$.

Due to Theorem \ref{CrossedProduct} and its expression as a crossed product C*-algebra, the description of the irreducible representations of $C^*(G_0^\star)$ follows from the determination of the orbits of $K\backslash U$ under the action of $K$.

Let $\mathcal{O}(K\backslash U) \subset C(K\backslash U)$ be the unital $\ast$-algebra of $U$-finite functions on $K \backslash U$. One can check that $\mathcal{O}(K\backslash U)$ is generated as a unital $\ast$-algebra by the functions $X, \,Z : K\backslash U \to \mathbb{C}$ defined by
$$\left.\begin{pmatrix}
    Z & iX^*\\
    -iX & -Z
\end{pmatrix}\right|_{b\cdot u} = u^*\theta u \qquad (u \in U),$$
where $\theta$ is the matrix defined by (\ref{thetadef}).
They satisfy the relations $Z^* = Z$ and $XX^* + Z^2 = 1$. In fact, $\mathcal{O}(K\backslash U)$ is the universal unital commutative $\ast$-algebra generated by $X$ and $Z$ and subject to these relations. The equality $X^*X + Z^2 = 1$ implies that $\mathcal{O}(K\backslash U)$ admits an enveloping C*-algebra. The latter is $C(K\backslash U)$, since both $\ast$-algebras are commutative and share the same characters.

Let us introduce the following change of variables:
$$T = X+X^*, \qquad T^\pm = X - X^*\mp 2Z.$$
One can check that any $k \in K$ acts on these variables as follows
$$k\cdot T= T, \qquad k\cdot T^\pm = \zeta_{\pm 2}(k)T^\pm.$$
The orbits of $K\backslash U$ under the action of $K$ are thus the level sets of $T$. For every $\lambda$ in the closed upper half-circle $\mathbb{U}_+$, let $p^\star_\lambda$ denote the point of $K\backslash U$ defined by
$X(p^\star_\lambda) = \lambda,\, Z(p^\star_\lambda) = 0$. From the above, it follows that each $K$-orbit is generated by $p^\star_\lambda$ for a unique $\lambda \in \mathbb{U}_+$. The stabilizer of $p^\star_\lambda$ is $K$ if $\lambda \in \{-1, 1\}$, otherwise it is $M = \{-1,1\}$. By Theorem \ref{CrossedProduct}, we get the following.
\begin{proposition}\label{spectrumGstar}
    A list of representatives of the spectrum of $C^*(G_0^\star)$ is
\begin{itemize}
    \item $\mathrm{Ind}(p^\star_\lambda, M, \varepsilon)$ for $\lambda \in \mathbb{U}_+\setminus \{-1,1\}$ and $\varepsilon \in \{-1,1\}$,
    \item the one-dimensional representations $\mathrm{Ind}(p^\star_\sigma, K, n)$ for $\sigma \in \{-1,1\}$ and $n \in \mathbb{Z}$.
\end{itemize}
\end{proposition}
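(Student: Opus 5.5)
The proposition follows from Theorem \ref{CrossedProduct} applied to $A = C(K\backslash U)$ with the right action of $K$, using that $C^*(G_0^\star) = K\ltimes C(K\backslash U)$. By parts (i)--(iii) of that theorem, the spectrum is parametrized by pairs consisting of a $K$-orbit in $K\backslash U$, with a chosen representative $x$, and an irreducible unitary representation of the stabilizer $K_x$ up to equivalence. Since $K_x$ is abelian in every case, conjugation by an element of $K$ acts trivially on $\hat K_x$ when the representative is fixed. Hence it suffices to exhibit a complete set of orbit representatives together with their stabilizers.

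\emph{Orbits.} Since $\mathcal{O}(K\backslash U)$ is dense in $C(K\backslash U)$ and separates points, a point $p$ is determined by the pair $(X(p), Z(p))$, which satisfies $|X(p)|^2 + Z(p)^2 = 1$.
\begin{itemize}
\item The invariant $T = X + X^*$ takes the value $2\Re X(p)$.
\item Writing $X = a + ib$ with $a,b$ real, we get $T^\pm = 2ib \mp 2Z$. These are complex conjugate up to sign, with $|T^\pm|^2 = 4(b^2 + Z^2) = 4(1 - a^2)$.
\item The relation $k\cdot T^\pm = \zeta_{\pm 2}(k) T^\pm$ shows that $K$ acts on the circle $\{(b,Z) : b^2 + Z^2 = 1 - a^2\}$ by rotation through $e^{2it}$, which is transitive.
\end{itemize}
So the level sets of $a = T/2$, for $a \in [-1,1]$, are exactly the orbits. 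Each level set contains the point with $Z = 0$ and $b \geq 0$, namely $p^\star_\lambda$ with $\lambda = a + i\sqrt{1-a^2} \in \mathbb{U}_+$. This representative is unique, since $\lambda \mapsto \Re\lambda$ is a bijection from $\mathbb{U}_+$ onto $[-1,1]$.

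\emph{Stabilizers.} The stabilizer of $p^\star_\lambda$ consists of those $e^{it\theta}$ fixing the value of $T^\pm$, that is, with $e^{2it}T^+(p^\star_\lambda) = T^+(p^\star_\lambda)$, where $T^+(p^\star_\lambda) = 2i\Im\lambda$.
\begin{itemize}
\item If $\lambda = \pm 1$, then $T^+(p^\star_\lambda)$ vanishes and the stabilizer is all of $K$.
\item Otherwise the stabilizer is $\{e^{it\theta} : e^{2it} = 1\} = \{\pm 1\} = M$.
\end{itemize}

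Combining these with $\hat M = \{-1,1\}$ and $\hat K = \mathbb{Z}$ via the characters $\zeta_n$, Theorem \ref{CrossedProduct} gives exactly the stated list. The representations $\mathrm{Ind}(p^\star_\sigma, K, n)$ act on $L(K,\mathbb{C})^{K} \cong \mathbb{C}$ and are therefore one-dimensional.

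The only delicate point is the parametrization of points by $(X, Z)$, which is what makes the transitivity computation rigorous. I would justify it by recalling that $X$ and $Z$ are matrix coefficients of $b\cdot u \mapsto u^*\theta u$, whose image in $i\mathfrak{u}$ is exactly the sphere $\{u^*\theta u\} \cong K\backslash U$. This identifies $K\backslash U$ with the unit sphere in the $(X, Z)$-coordinates, and on it the $K$-action is rotation about the $T$-axis.
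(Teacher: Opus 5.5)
Your proof is correct and takes essentially the same route as the paper. You apply Theorem~\ref{CrossedProduct} to $K\ltimes C(K\backslash U)$, identify the $K$-orbits as the level sets of $T$ with unique representatives $p^\star_\lambda$ for $\lambda\in\mathbb{U}_+$, and read off the stabilizers: $K$ for $\lambda=\pm1$ and $M$ otherwise. Beyond that, you make explicit what the paper leaves implicit: $(X,Z)$ identifies $K\backslash U$ with the unit sphere, on which $K$ acts by rotation about the $T$-axis, and this justifies both the transitivity on level sets and the stabilizer computation.
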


For every $(\lambda,\varepsilon) \in \mathbb{U}_+ \times \{-1,1\}$, the underlying Hilbert space of $\mathrm{Ind}(p^\star_\lambda, M, \varepsilon)$ is $L^2(K)^\varepsilon$. When $\lambda \in \{-1,1\}$ and $n \in \mathbb{Z}^\varepsilon$, the one-dimensional subspace $\mathbb{C}\zeta_n$ is an irreducible subrepresentation of $\mathrm{Ind}(p^\star_\lambda, M, \varepsilon)$ identifying with $\mathrm{Ind}(p^\star_\lambda, K, n)$.

As for the motion group, these representations can be characterized by the action on a dense subspace of an analogue of the enveloping $\ast$-algebra, as follows. By differentiating the action of $K$ on $\mathcal{O}(K\backslash U)$, we get an action $\#$ of $U(\mathfrak{k})$ on $\mathcal{O}(K\backslash U)$. The functor associating to any unital $\ast$-algebra $A$ the set of pairs of unital $*$-algebra morphisms $(\iota : \mathcal{O}(K\backslash U) \to A,\, \kappa: U(\mathfrak{k}) \to A)$ such that for all $f \in \mathcal{O}(K\backslash U)$, $\kappa(\theta)\iota(f) -\iota(f)\kappa(\theta) = \iota(\theta \# f)$, is representable. If $\mathfrak{g}_0^\star$ denotes the Lie algebroid of $G_0^\star$, then we define $U(\mathfrak{g}_0^\star)$ as a universal element of this functor. The motivation for this notation is that $U(\mathfrak{g}_0^\star)$ can be viewed as the enveloping $\mathcal{O}(K\backslash U)$-algebra of the space of "regular" sections of $\mathfrak{g}_0^\star$. One can check that the universal $*$-algebra morphisms from $U(\mathfrak{k})$ and $\mathcal{O}(K\backslash U)$ to $U(\mathfrak{g}_0^\star)$ are injective.

For any representation of $C^*(G_0^\star)$ on a Hilbert space $\mathcal{H}$, the space of $K$-finite vectors of $\mathcal{H}$ inherits a structure of $U(\mathfrak{g}_0^\star)$-module by differentiating the action of $K$ and restricting the action of $C(K\backslash U)$ to $\mathcal{O}(K\backslash U)$. For any point $x \in K\backslash U$, any subgroup $\Gamma \subset K_x$ and any irreducible unitary representation $\pi$ of $\Gamma$, let us denote by $\mathrm{ind}(x,\Gamma,\pi)$ the unitary $U(\mathfrak{g}_0^\star)$-module consisting of the space of $K$-finite vectors of $\mathrm{Ind}(x, \Gamma, \pi)$. If $\varepsilon \in\{-1, 1\}$ and $\lambda \in \mathbb{U}_+$, the underlying space of $\mathrm{ind}(\tilde p_\lambda,M,\varepsilon)$ is $\mathcal{O}^\varepsilon(K)$ and the action of $U(\mathfrak{g}_0^\star)$ on it is characterized by
\begin{gather}\label{actionLT}\theta \zeta_n = n \zeta_n, \qquad T\zeta_n = (\lambda + \lambda^{-1})\zeta_n, \qquad T^\pm \zeta_n = (\lambda-\lambda^{-1})\zeta_{n\pm 2}.\end{gather}

\begin{convention} \label{normalization} In the following, we identify $\mathfrak{g}/\mathfrak{k} \otimes_\mathbb{R} \mathbb{C}$ with the complexified cotangent space $T_b^*(K\backslash U)\otimes_\mathbb{R} \mathbb{C}$ of $K\backslash U$ at $b$ by identifying $\delta X$, $\delta Z$ and $\delta T^\pm$ with the differentials of $X$, $Z$ and $T^\pm$ at $b$. In this picture, $\mathfrak{g}/\mathfrak{k}$ identifies with $i T_b^*(K\backslash U)$. This way, the Pontryagin dual $(\mathfrak{g}/\mathfrak{k})\,\hat{}$ of $\mathfrak{g}/\mathfrak{k}$ identifies with the tangent space $T_b(K\backslash U)$ as follows: to each character $\chi$ of $\mathfrak{g}/\mathfrak{k}$ corresponds the unique vector $v \in T_b(K\backslash U)$ such that $\chi(i\xi) = e^{i\langle \xi,v \rangle}$ for all $\xi \in T_b^*(K\backslash U) = -i\mathfrak{g}/\mathfrak{k}$.
\end{convention}

\section{The continuous field of C*-algebras}

The aim of this third section is to define the continuous field of C*-algebras evoked in the introduction. In order to have a better insight into this construction, we first define an algebraic field for (the analogues of) the universal enveloping $\ast$-algebras. More precisely, we construct an involutive algebra ${}_\mathbb{A}U(\mathfrak{g})$ defined over the ring $\mathbb{A}$ of analytic functions on $\mathbb{R}_+^*\times \mathbb{R}$ and we show that it specializes at each $(q,t) \in \mathbb{R}_+^*\times \mathbb{R}$ according to the following table.

\begin{center}
\begin{tabular}{ |c|c|c| } 
 \cline{2-3}
\multicolumn{1}{c|}{} & $q\neq 1$ & $q = 1$ \\
 \hline
 $t \neq 0$ & $U_{q^t}(\mathfrak{g})$ & $U(\mathfrak{g})$ \\
 \hline
 $t = 0$ & $U(\mathfrak{g}_0^\star)$ & $U(\mathfrak{g}_0)$\\
 \hline
\end{tabular}
\end{center}
Although we do not prove it explicitly, the continuous field of C*-algebras is then constructed so that the elements of ${}_\mathbb{A}U(\mathfrak{g})$ give continuous vector fields of unbounded multipliers.

\subsection{The algebraic quantum Mackey field} In this subsection, we define the algebra ${}_\mathbb{A}U(\mathfrak{g})$ and show how some of its modules specialize at points $(q,t)$ to the various irreducible representations introduced in Section 2. We emphasize that this part is mainly motivational and may thus be skipped on a first reading.

\begin{vocabulary}
    Given a ring morphism $\varphi : A_1 \to A_2$ between commutative rings and $M_j$ an $A_j$-module for each $j \in \{1,2\}$, an additive map $f : M_1 \to M_2$ is said to be $\varphi$-linear if $f(am) = \varphi(a)f(m)$ for every $(a,m) \in A \times M$.

Let $A$ be a commutative $\ast$-algebra, let $\varphi : A \to \mathbb{C}$ be a $\ast$-algebra morphism and let $B$ be a unital $(A,\ast)$-algebra, that is, a unital $\ast$-algebra equipped with a unital morphism of $\ast$-algebras $\iota :A \to Z(B)$, where $Z(B)$ denotes the center of $B$. By a specialization of $B$ at $\varphi$, we mean a universal element of the functor associating to any unital $\ast$-algebra $C$ the set of $\varphi$-linear $\ast$-algebra morphisms $B \to C$. Let $\varphi_B : B \to B_\varphi$ be such a specialization and $M$ be any $B$-module. The functor associating to any $B_\varphi$-module $N$ the set of $\varphi_B$-linear maps $M \to N$ is representable. A universal element of that functor is said to be a specialization of $M$ at $\varphi$.
\end{vocabulary}

Let $\mathbb{A}$ be the $\ast$-algebra of real analytic complex-valued functions on $\mathbb{R}_+^*\times \mathbb{R}$. Its fraction field, denoted by $\mathbb{F}$, is an involutive extension of $\mathbb{C}$. We denote by\footnote{We use the archaic greek letter \textit{koppa} $\cpa$ to denote the coordinate function of the parameter $q$.} $(\cpa,\tau)$ the canonical coordinates of $\mathbb{R}_+^*\times \mathbb{R}$. Note that the element $\cpa^\tau \in \mathbb{F}^\times$ satisfies the conditions needed to define a deformed universal enveloping algebra ${}_\mathbb{F}U_{\cpa^\tau}(\mathfrak{g})$, see Section 2.3, and we recall that we have generators $X_{\cpa^\tau}$, $Z_{\cpa^\tau}$, $\theta_{\cpa^\tau}$ as in (\ref{XZ}), (\ref{thetaXZ}).

\begin{definition}
We define ${}_\mathbb{A}U(\mathfrak{g})$ as the $(\mathbb{F},\ast)$-subalgebra of ${}_\mathbb{F}U_{\cpa^\tau}(\mathfrak{g})$ generated by the elements
$$\theta_\mathbb{A} = \theta_{\cpa^\tau}, \qquad x_\mathbb{A} = \eta^{-1}(X_{\cpa^\tau}-1), \qquad z_\mathbb{A} = \eta^{-1} Z_{\cpa^\tau},$$
where $\eta$ is defined as the element of $\mathbb{A}$ such that $\cpa^\tau - \cpa^{-\tau} = \tau \eta$.
\end{definition}

By applying the exact same techniques as in \cite{YGE}*{§4}, we get the following result.
\begin{proposition}
    ${}_\mathbb{A}U(\mathfrak{g})$ is a universal unital $(\mathbb{F},\ast)$-algebra with generators $\theta_\mathbb{A}$, $x_\mathbb{A}$, $z_\mathbb{A}$ and relations
    \begin{equation}
        \begin{gathered}
            \cpa^\tau x_\mathbb{A}\theta _\mathbb{A} - \cpa^{-\tau} \theta_\mathbb{A} x_\mathbb{A} = [2]_{\cpa^\tau} z_\mathbb{A} - \tau \theta_\mathbb{A}, \qquad
            z_\mathbb{A} \theta_\mathbb{A} - \theta_\mathbb{A} z_\mathbb{A} = x_\mathbb{A} - x_\mathbb{A}^*,\\
            \cpa^\tau z_\mathbb{A} x_\mathbb{A} - \cpa^{-\tau}x_\mathbb{A}z_\mathbb{A} = -\tau z_\mathbb{A},\qquad
            x_\mathbb{A} x_\mathbb{A}^* + \cpa^{2\tau}z_\mathbb{A}^2 = x_\mathbb{A}^*x_\mathbb{A} + \cpa^{-2\tau}z_\mathbb{A}^2,\\
            x_\mathbb{A} +x_\mathbb{A}^*+\eta(x_\mathbb{A} x_\mathbb{A}^* + \cpa^{2\tau}z_\mathbb{A}^2) = 0.
        \end{gathered}
    \end{equation}
\end{proposition}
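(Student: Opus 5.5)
The proof has two halves. First, the listed relations hold in ${}_\mathbb{A}U(\mathfrak{g})$. Second, the universal algebra they define, call it $\tilde U$, maps isomorphically onto ${}_\mathbb{A}U(\mathfrak{g})$.

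\emph{Step 1: the relations hold.} Write $X_{\cpa^\tau}=1+\eta x_\mathbb{A}$ and $Z_{\cpa^\tau}=\eta z_\mathbb{A}$, and substitute into (\ref{XZ}) and (\ref{thetaXZ}).

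\begin{itemize}
\item The relation $qX\theta-q^{-1}\theta X=[2]_qZ$ becomes $\eta(\cpa^\tau x_\mathbb{A}\theta_\mathbb{A}-\cpa^{-\tau}\theta_\mathbb{A}x_\mathbb{A})+(\cpa^\tau-\cpa^{-\tau})\theta_\mathbb{A}=\eta[2]_{\cpa^\tau}z_\mathbb{A}$. Dividing by $\eta$ and using $\cpa^\tau-\cpa^{-\tau}=\tau\eta$ gives the first relation.
\item The relation $XZ=q^2ZX$, after multiplying by $q^{-1}$ and dividing by $\eta^2$, gives the third relation.
\item The relation $Z\theta-\theta Z=X-X^*$ gives the second relation directly.
\item The two quadratic relations, expanded and divided by $\eta$, give $x+x^*+\eta(xx^*+q^2z^2)=0$ and $x+x^*+\eta(x^*x+q^{-2}z^2)=0$. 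Their difference, divided by $\eta$, is the fourth relation; the first of the two is the fifth.
\end{itemize}

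All these divisions take place in ${}_\mathbb{F}U_{\cpa^\tau}(\mathfrak{g})$, which is an $\mathbb{F}$-algebra, so this step is routine. It yields a surjective $\ast$-morphism $\Phi:\tilde U\to{}_\mathbb{A}U(\mathfrak{g})$.

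\emph{Step 2: $\Phi$ is injective.} I would follow \cite{YGE}*{§4} and build a PBW-type spanning set for $\tilde U$, then show its image in ${}_\mathbb{F}U_{\cpa^\tau}(\mathfrak{g})$ is linearly independent.

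For the spanning set, the relations let one reorder words. The third relation moves $z$ past $x$. The first two move $\theta$ to the left of $x$ and $z$; a $\theta$-commutator produces $x^*$, which is harmless. The fourth and fifth relations rewrite $xx^*$ (using $\eta\neq0$ in $\mathbb{F}$) and then $x^*x$ as combinations of $x$, $x^*$ and $z^2$. The resulting normal forms are $\theta^a z^b x^c$ and $\theta^a z^b (x^*)^c$, with $a,b,c\ge0$; these span $\tilde U$ as an $\mathbb{F}$-space.

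Their images under $\Phi$ are $\theta^a(\eta^{-1}Z)^b(\eta^{-1}(X-1))^c$ and the analogous words with $X^*$. By triangularity with respect to the known basis $\theta^aZ^bX^c$, $\theta^aZ^b(X^*)^c$ of ${}_\mathbb{F}U_{\cpa^\tau}(\mathfrak{g})$, they are linearly independent. That basis can be taken from \cite{YGE}*{§4}, or obtained from the standard basis of $\mathcal{O}_q(K\backslash U)$ together with the freeness of ${}_\mathbb{F}U_q(\mathfrak{g})$ over ${}_\mathbb{F}U_q(\mathfrak{k})$.

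\emph{Main obstacle.} The hard part is the spanning argument in Step 2: checking that the reordering rewriting terminates with a filtration or degree argument. The difficulty is that, for instance, rewriting $xx^*$ via the fifth relation introduces lower-degree terms $x^*$ and $z^2$, and one must make sure these do not cycle.

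I would handle this by filtering $\tilde U$ by total degree in $x,x^*,z$. In the associated graded algebra the relations become $q$-commutation relations, because the terms containing $\tau$ or coming from $x+x^*$ have strictly lower degree. The spanning statement then follows by induction on degree.
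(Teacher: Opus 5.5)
There is a genuine gap: your Step 2 works over $\mathbb{F}$, but the proposition has content only over $\mathbb{A}$. The proposition and the definition before it say ``$(\mathbb{F},\ast)$'', but ${}_\mathbb{A}U(\mathfrak{g})$ has to be read as an $(\mathbb{A},\ast)$-algebra, for two reasons. First, the $\mathbb{F}$-subalgebra generated by $\theta_\mathbb{A},x_\mathbb{A},z_\mathbb{A}$ is all of ${}_\mathbb{F}U_{\cpa^\tau}(\mathfrak{g})$, since $X_{\cpa^\tau}=1+\eta x_\mathbb{A}$, $Z_{\cpa^\tau}=\eta z_\mathbb{A}$ and $\eta\in\mathbb{F}^\times$. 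Second, the next proposition reads off the specializations at the characters $\mathrm{ev}_{q,t}$ of $\mathbb{A}$ from this presentation, and the specialization of an $\mathbb{F}$-algebra at $\mathrm{ev}_{q,t}$ is zero.

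Read literally over $\mathbb{F}$, the statement is immediate. The substitution $X=1+\eta x$, $Z=\eta z$ is invertible, and your Step 1 run backwards shows that the new relations are equivalent to (\ref{XZ}) and (\ref{thetaXZ}). This needs $\theta_\mathbb{A}^*=\theta_\mathbb{A}$ and $z_\mathbb{A}^*=z_\mathbb{A}$, which you use tacitly and which the displayed list omits. The known presentation of ${}_\mathbb{F}U_q(\mathfrak{g})$ then finishes the proof, so your Step 2 is superfluous.

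Over $\mathbb{A}$, Step 2 breaks down. Rewriting $x_\mathbb{A}x_\mathbb{A}^*$ through the fifth relation divides by $\eta$. But $\eta$ vanishes on the whole line $\{\cpa=1\}$, so $\eta^{-1}\notin\mathbb{A}$, and over $\mathbb{A}$ you can only shorten $\eta\,x_\mathbb{A}x_\mathbb{A}^*$. Consequently your normal forms do not span over $\mathbb{A}$. In ${}_\mathbb{F}U_{\cpa^\tau}(\mathfrak{g})$ one has $x_\mathbb{A}x_\mathbb{A}^*=-\eta^{-1}(x_\mathbb{A}+x_\mathbb{A}^*)-\cpa^{2\tau}z_\mathbb{A}^2$. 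Since the words $\theta^az^bx^c,\theta^az^b(x^*)^c$ are $\mathbb{F}$-independent, these coefficients are forced, and they are not in $\mathbb{A}$.

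This is structural, not a bookkeeping issue. At $\cpa=1$ the fifth relation specializes to $x^*=-x$, so $x_\mathbb{A}x_\mathbb{A}^*$ becomes $-x^2$, which cannot be reduced to lower degree in $U(\mathfrak{g})$ or $U(\mathfrak{g}_0)$. Your two families of normal forms also specialize onto each other, so no $\mathbb{A}$-basis can be built from them.

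What actually has to be proved is that the universal $(\mathbb{A},\ast)$-algebra $\tilde U_\mathbb{A}$ has no $\mathbb{A}$-torsion. Since $\tilde U_\mathbb{A}\otimes_\mathbb{A}\mathbb{F}\cong{}_\mathbb{F}U_{\cpa^\tau}(\mathfrak{g})$, this torsion is exactly $\ker\Phi$. One way is to produce an $\mathbb{A}$-spanning family of $\tilde U_\mathbb{A}$ whose image in ${}_\mathbb{F}U_{\cpa^\tau}(\mathfrak{g})$ is $\mathbb{F}$-linearly independent. Such a family must specialize at $\cpa=1$ to a basis of $U(\mathfrak{g})$, resp.\ $U(\mathfrak{g}_0)$, so it has to be built compatibly with the degeneration $x^*\to -x$. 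Your degree-filtration argument only ever uses $\mathbb{F}$-coefficients and does not address this point.
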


The characters of the $\ast$-algebra $\mathbb{A}$ are the evaluations at points $(q,t) \in \mathbb{R}^*_+\times \mathbb{R}$. The following proposition describes the corresponding specializations of ${}_\mathbb{A}U(\mathfrak{g})$.

\begin{proposition}
    Let $(q,t) \in \mathbb{R}_+^*\times \mathbb{R}$ and let $h = 2(\log q)$. The specialization of ${}_\mathbb{A}U(\mathfrak{g})$ at $(q,t)$, which we denote by $\mathrm{ev}_{q,t} : {}_\mathbb{A}U(\mathfrak{g}) \to U_{q,t}(\mathfrak{g})$, is given by the following table.
    \begin{center}
        \begin{tabular}{ |c|c|c|c|c| } 
         \hline
        Value of $(q,t)$ & $U_{q,t}(\mathfrak{g})$ & $\mathrm{ev}_{q,t}(x_\mathbb{A})$ & $\mathrm{ev}_{q,t}(z_\mathbb{A})$ & $\mathrm{ev}_{q,t}(\theta_\mathbb{A})$ \\
         \hline\hline
         $q\neq 1, \,t \neq 0$ & $U_{q^t}(\mathfrak{g})$ & $\eta(q,t)^{-1}(X_{q^t}-1)$ & $\eta(q,t)^{-1}Z_{q^t}$ & $\theta_{q^t}$ \\
         \hline\hline
         $q\neq 1, \,t = 0$ & $U(\mathfrak{g}_0^\star)$ & $h^{-1}(X-1)$ & $h^{-1}Z$ & $\theta$ \\
         \hline\hline
         $q=1, \,t \neq 0$ & $U(\mathfrak{g})$ & $tH/2$ & $itE$ & $\theta$ \\
         \hline\hline
         $q=1, \,t = 0$ & $U(\mathfrak{g}_0)$ & $\delta X$ & $\delta Z$ & $\theta$ \\
         \hline
        \end{tabular}
    \end{center}
\end{proposition}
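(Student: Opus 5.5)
The proof will use the presentation of ${}_\mathbb{A}U(\mathfrak{g})$ from the previous proposition. Since that presentation is universal, the specialization at a character $\mathrm{ev}_{q,t}$ of $\mathbb{A}$ is the universal unital $\ast$-algebra on generators $\theta,x,z$ obtained by evaluating the structure constants at $(q,t)$. Recall that $\cpa^\tau \mapsto q^t$, $\tau\mapsto t$, $[2]_{\cpa^\tau}\mapsto q^t+q^{-t}$. Also $\eta\mapsto \eta(q,t)$, which equals $(q^t-q^{-t})/t$ for $t\neq 0$ and $2\log q = h$ for $t=0$. The plan is then to identify this specialized algebra, case by case, with the algebra in the table via the indicated assignment of generators. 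In each case I will build mutually inverse $\ast$-morphisms: check that the images satisfy the specialized relations, and conversely that the target's defining relations follow from the specialized ones.

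For $q\neq 1$, $t\neq 0$, the scalar $\eta(q,t)$ is nonzero. I set $X = 1+\eta x$ and $Z=\eta z$ and substitute into the specialized relations. The first relation, $q^t x\theta - q^{-t}\theta x = [2]z - t\theta$, becomes $qX\theta - q^{-1}\theta X = [2]_{q^t}Z$ after multiplying by $\eta$. Here the $\theta$ terms cancel because $q^t-q^{-t} = t\eta$. The remaining relations similarly become (\ref{XZ}) and (\ref{thetaXZ}) with parameter $q^t$; for instance, the last relation is exactly $XX^*+q^{2t}Z^2 = 1$. Since the change of variables is invertible, the presentations coincide, giving $U_{q^t}(\mathfrak{g})$.

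For $t=0$, $q\neq1$, we have $\cpa^\tau\mapsto 1$, $[2]\mapsto 2$, $\eta\mapsto h\neq 0$. The relations become: $x,z$ commute with each other and with their adjoints, $z$ is self-adjoint, $x+x^*+h(xx^*+z^2)=0$, $[x,\theta]=2z$, and $[z,\theta]=x-x^*$. Setting $X=1+hx$ and $Z=hz$ turns this into the commutative algebra $\mathcal{O}(K\backslash U)$ ($XX^*+Z^2=1$) together with commutation relations with $\theta$. It remains to verify that these are exactly the relations $[\theta,f]=\theta\# f$ for $f=X,Z$. I will compute $\theta\# X$ and $\theta\# Z$ from $u^*\theta u$ by differentiating the action of $e^{is\theta}$; this is a short matrix computation. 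Since $X,Z$ generate $\mathcal{O}(K\backslash U)$ and the derivation condition is multiplicative, this identifies the specialization with $U(\mathfrak{g}_0^\star)$.

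For $q=1$ we have $\cpa^\tau\mapsto 1$, $[2]\mapsto 2$, and $\eta\mapsto 0$. The relations become $x+x^*=0$, $[x,\theta]=2z-t\theta$, $[z,\theta]=2x$, and $zx-xz=-tz$. For $t=0$ these are exactly the defining relations of $U(\mathfrak{g}_0)$ with $x=\delta X$, $z=\delta Z$: $\delta X^*=-\delta X$, $[\theta,\delta X]=-2\delta Z$, $[\theta,\delta Z]=-\delta X\cdot$(normalization to be checked). For $t\neq 0$, I will map $x\mapsto tH/2$, $z\mapsto itE$, $\theta\mapsto i(E-F)$ and check the $\mathfrak{sl}_2$ brackets. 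I also have to check that $\{H,E,\theta\}$ generates $U(\mathfrak{g})$ and that the specialized relations imply the Lie relations of $\mathfrak{g}$, so that the map is an isomorphism. The relation $[z,\theta]=x-x^*$ must then be read with $x^*=-x$.

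I expect the main obstacle to be justifying that specialization commutes with the presentation at points where $\eta$ or $\tau$ vanish. Universality makes the specialization the quotient by the specialized relations only if ${}_\mathbb{A}U(\mathfrak{g})$ really is presented over $\mathbb{A}$, and not merely an $\mathbb{A}$-algebra sitting inside an $\mathbb{F}$-algebra. I would handle this as the previous proposition does, following \cite{YGE}*{§4}: exhibit a PBW-type $\mathbb{A}$-basis (monomials $\theta^a x^b (x^*)^c z^d$ with a suitable restriction), so that ${}_\mathbb{A}U(\mathfrak{g})$ is free over $\mathbb{A}$. Then the evaluated algebra has the corresponding basis over $\mathbb{C}$, which also rules out hidden collapse at $q=1$ or $t=0$.
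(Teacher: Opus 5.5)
Your proposal is correct and follows the paper's argument: specialize the $\mathbb{A}$-presentation from the preceding proposition at $(q,t)$, then match it with the presentations of $U_{q^t}(\mathfrak{g})$, $U(\mathfrak{g}_0^\star)$, $U(\mathfrak{g})$ and $U(\mathfrak{g}_0)$, using $X=1+\eta x$, $Z=\eta z$ (resp.\ $x\mapsto tH/2$, $z\mapsto itE$ at $q=1$). Two small points: the normalization you left open resolves as $\theta\,\delta Z-\delta Z\,\theta=-2\,\delta X$, because $[i(E-F),iE]=-H$ (the relation displayed for $U(\mathfrak{g}_0)$ in the paper drops this factor $2$, which the shifts $\delta T^\pm\zeta_n=\lambda\zeta_{n\pm2}$ also require), and this is exactly the specialization of $z_\mathbb{A}\theta_\mathbb{A}-\theta_\mathbb{A}z_\mathbb{A}=x_\mathbb{A}-x_\mathbb{A}^*=2x_\mathbb{A}$; and your ``main obstacle'' is not one, because once the preceding proposition presents ${}_\mathbb{A}U(\mathfrak{g})$ over $\mathbb{A}$, its specialization is, by the universal property (base change is right exact), the algebra presented by the evaluated relations, so you need no PBW basis or freeness over $\mathbb{A}$.
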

\vspace{0.12cm}
This follows from the characterization of ${}_\mathbb{A}U(\mathfrak{g})$ in the last proposition and the description of the algebras $U_q(\mathfrak{g})$, $U(\mathfrak{g}_0^\star)$, $U(\mathfrak{g})$, $U(\mathfrak{g}_0)$ in terms of generators and relations given in the previous section.

One can also describe explicitly the specializations of the analogues of the parabolically induced modules from Section 2.3. Let $\varepsilon \in \{-1,1\}$ and $\lambda$ be any invertible element of $\mathbb{A}$ such that $\lambda = 1$ wherever $\cpa = 1$. As explained in the last section, the underlying space of the ${}_\mathbb{F}(\mathfrak{g},K)_{\cpa^\tau}$-module ${}_\mathbb{F}\mathrm{ind}_{\cpa^\tau}(\varepsilon, \lambda)$ is an $\mathbb{F}$-vector space $_{\mathbb{F}}\mathcal{O}_{\cpa^\tau}^\varepsilon (K)$, which is independent of $\lambda$ and has a basis $(\zeta_{\cpa^\tau\!,n})_{n \in \mathbb{Z}^\varepsilon}$. Let us denote by ${}_\mathbb{A}\mathcal{O}^{\varepsilon}(K)$ the $\mathbb{A}$-submodule of $_{\mathbb{F}}\mathcal{O}_{\cpa^\tau}^\varepsilon (K)$ spanned by the elements of this basis.

\begin{lemma}
    The action of ${}_\mathbb{A}U(\mathfrak{g})$ on ${}_\mathbb{F}\mathrm{ind}_{\cpa^\tau}(\varepsilon, \lambda)$ stabilizes ${}_\mathbb{A}\mathcal{O}^{\varepsilon}(K)$.
\end{lemma}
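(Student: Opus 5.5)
Since ${}_\mathbb{A}U(\mathfrak{g})$ is generated as an algebra by $\theta_\mathbb{A}$, $x_\mathbb{A}$, $x_\mathbb{A}^*$, $z_\mathbb{A}$ (together with scalars from $\mathbb{A}$ acting diagonally; note that only $\mathbb{A}$-linear combinations of products matter for stability of an $\mathbb{A}$-module), it suffices to check that each of these four generators maps every basis vector $\zeta_{\cpa^\tau\!,n}$, $n\in\mathbb{Z}^\varepsilon$, into ${}_\mathbb{A}\mathcal{O}^{\varepsilon}(K)$. For $\theta_\mathbb{A}$ this is immediate from (\ref{actionT}): $\theta_\mathbb{A}\zeta_{\cpa^\tau\!,n} = [n]_{\cpa^\tau}\zeta_{\cpa^\tau\!,n}$, and $[n]_{\cpa^\tau} = (\cpa^{n\tau}-\cpa^{-n\tau})/(\cpa^\tau-\cpa^{-\tau})$ is real analytic on $\mathbb{R}_+^*\times\mathbb{R}$: writing $\cpa^{s\tau}=e^{s\tau\log\cpa}$, it is $\sinh(n u)/\sinh(u)$ with $u=\tau\log\cpa$, an entire function of $u$.

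For $x_\mathbb{A}$, $x_\mathbb{A}^*$, $z_\mathbb{A}$, I will first invert (\ref{TXZ}) on the $n$-th weight space. With $p=\cpa^\tau$, the three elements $T_{p,n}$, $T^\pm_{p,n}$ are linear combinations of $X_p$, $X_p^*$, $Z_p$, and since $\theta$ shifts weights appropriately, on $\zeta_{p,n}$ the operator $X_p$ (resp. $X_p^*$, $Z_p$) has components only in the $n$, $n\pm2$ directions. Concretely one solves: $X_p\zeta_{p,n}$, $X_p^*\zeta_{p,n}$, $Z_p\zeta_{p,n}$ are combinations of $T_{p,n}\zeta_{p,n}=(\lambda+\lambda^{-1})\zeta_{p,n}$ and $T^\pm_{p,n}\zeta_{p,n}=(\lambda p^{1\pm n}-\lambda^{-1}p^{-1\mp n})\zeta_{p,n\pm2}$ (using the weight decomposition: the $\zeta_{p,n}$-component comes from $T_{p,n}$, the $\zeta_{p,n\pm2}$-components from $T^\pm_{p,n}$, as in the proof of Lemma \ref{generatorsR_q}). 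This yields explicit formulas $X_p\zeta_{p,n}=a_n\zeta_{p,n}+b_n^+\zeta_{p,n+2}+b_n^-\zeta_{p,n-2}$, similarly for $Z_p$, with coefficients rational in $p^{1/2}$, $\lambda$, and with denominators products of $q$-numbers like $[2]_p$ and $p+p^{-1}$-type expressions which are units in $\mathbb{A}$ (nonvanishing analytic functions).

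The remaining and main step is divisibility by $\eta$: I must show that $a_n-1$, $b_n^\pm$ and the coefficients of $Z_p\zeta_{p,n}$ are all multiples of $\eta$ in $\mathbb{A}$. Here $\eta=(\cpa^\tau-\cpa^{-\tau})/\tau = 2\sinh(u)/\tau$ with $u=\tau\log\cpa$, so its zero set is exactly $\{\cpa=1\}$ (vanishing to first order in $\log\cpa$, since $\eta\approx 2\log\cpa$ near there, including at $\tau=0$). Thus it suffices to show each coefficient is an analytic function vanishing on $\{\cpa=1\}$, and then use that $\log\cpa$ divides such functions in $\mathbb{A}$ (Hadamard's lemma in the variable $\log\cpa$), while $\eta/\log\cpa$ is an analytic unit. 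Vanishing on $\cpa=1$ follows from the hypothesis $\lambda=1$ there: then $p=1$, $\lambda+\lambda^{-1}=2$ forces the diagonal part of $X_p$ to be $1$ and the off-diagonal coefficients $\lambda p^{1\pm n}-\lambda^{-1}p^{-1\mp n}$ to vanish, giving $X_p\zeta=\zeta$, $Z_p\zeta=0$. I expect the bookkeeping in inverting (\ref{TXZ}) to be the technical obstacle, specifically confirming the denominators are units on all of $\mathbb{R}_+^*\times\mathbb{R}$ (including $\tau=0$, where $p=1$ and $q$-integers degenerate to ordinary integers, and $[2]_p=2\cosh u$ stays nonzero).
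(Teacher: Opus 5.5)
Your proposal is correct and follows essentially the paper's route. The paper does your inversion of (\ref{TXZ}) at the level of the algebra: it observes that $1,s_{\mathbb{A},n},s^{+}_{\mathbb{A},n},s^{-}_{\mathbb{A},n}$, where $s_{\mathbb{A},n}=\eta^{-1}(T_{\cpa^\tau\!,n}-[2]_{\cpa^\tau})$ and $s^\pm_{\mathbb{A},n}=\eta^{-1}T^\pm_{\cpa^\tau\!,n}$, span the same $\mathbb{A}$-module as $1,x_\mathbb{A},x_\mathbb{A}^*,z_\mathbb{A}$, then divides their one-term coefficients from (\ref{actionT}) by $\eta$, using that these vanish on $\{\cpa=1\}$ and that $\eta/(\cpa-1)\in\mathbb{A}^\times$. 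The unit you flagged as the obstacle is the determinant $-(p^n+p^{-n})\big((p+p^{-1})^2+(p^n-p^{-n})^2\big)$ of the coefficient matrix in (\ref{TXZ}); it is strictly negative for $p>0$, so that step goes through.
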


\begin{proof}
    For every $n \in \mathbb{Z}$, let us write
    \begin{gather}\label{actionsA} \begin{gathered} s_{\mathbb{A},n} = \cpa^{-\tau}x_\mathbb{A} + \cpa^\tau x_\mathbb{A}^* + (\cpa^{n\tau} - \cpa^{-n\tau})z_\mathbb{A} = \eta^{-1}(T_{\cpa^\tau\!,n} - [2]_{\cpa^\tau}),\\
    s_{\mathbb{A},n}^\pm = \cpa^{\pm n \tau}x_\mathbb{A} - \cpa^{\mp n \tau}x_\mathbb{A}^* \mp[2]_{\cpa^\tau}z_\mathbb{A} \pm \tau [n]_{\cpa^\tau} = \eta^{-1}T_{\cpa^\tau\!,n}^\pm.
    \end{gathered}\end{gather}
    Let us denote by ${}_\mathbb{A}V$ the $\mathbb{A}$-submodule of ${}_\mathbb{A}U(\mathfrak{g})$ spanned by $1, x_\mathbb{A}, x^*_\mathbb{A}, z_\mathbb{A}$. One can check that for each $n \in \mathbb{Z}$, we have ${}_\mathbb{A}V = \mathrm{span}_\mathbb{A}(1,s_{\mathbb{A},n}, s_{\mathbb{A},n}^+, s_{\mathbb{A},n}^-)$. Now, using the formulas (\ref{actionT}), we get
    \begin{equation}\label{s+-}\begin{gathered}
        s_{\mathbb{A},n} \zeta_{\cpa^\tau\!,n} = \eta^{-1}\left(\lambda + \lambda^{-1}-[2]_{\cpa^\tau}\right)\zeta_{\cpa^\tau\!,n}, \\ s_{\mathbb{A},n}^\pm \zeta_{\cpa^\tau\!,n} = \eta^{-1}\left(\lambda \cpa^{\tau(1\pm n)} - \lambda^{-1} \cpa^{-\tau(1\pm n)}\right)\zeta_{\cpa^\tau\!,n \pm 2}.
    \end{gathered}
    \end{equation}
    From the very expression of $\eta$, it follows that $\eta/(\cpa -1) \in \mathbb{A}^\times$. Hence, for any $f \in \mathbb{A}$, if $f = 1$ wherever $\cpa = 1$, then $\eta^{-1}f \in \mathbb{A}$. Since $\lambda = 1$ on $\cpa^{-1}(1)$, the above expressions (\ref{s+-}) show that
    $${}_\mathbb{A} V \zeta_{\cpa^\tau\!,n} \subset \mathrm{span}_\mathbb{A}\!\left(\zeta_{\cpa^\tau\!,m} : m \in \{n-2, n, n+2\}\right).$$
    Since ${}_\mathbb{A}U(\mathfrak{g})$ is generated as an $\mathbb{A}$-algebra by the elements of ${}_\mathbb{A} V$ and by $\theta_\mathbb{A}$, which clearly preserves ${}_\mathbb{A}\mathcal{O}^{\varepsilon}(K)$, this concludes the proof.
\end{proof}

Let us denote by ${}_\mathbb{A}\mathrm{ind}(\varepsilon, \lambda)$ the ${}_\mathbb{A}U(\mathfrak{g})$-module ${}_\mathbb{A}\mathcal{O}^{\varepsilon}(K) \subset {}_\mathbb{F}\mathrm{ind}_{\cpa^\tau}(\varepsilon, \lambda)$. The next proposition summarizes how this module specializes at each point of $\mathbb{R}_+^*\times \mathbb{R}$.

\begin{notation}
    We write $\theta_1 = \theta$, $\mathcal{O}_1^\varepsilon(K) = \mathcal{O}^\varepsilon(K)$ and $\zeta_{1,n} = \zeta_n$ for every $n \in \mathbb{Z}$.
\end{notation}

\begin{proposition}
    We fix $(q,t) \in \mathbb{R}_+^*\times \mathbb{R}$. Let $\mathrm{ev}_{q,t}^\varepsilon : {}_\mathbb{A}\mathcal{O}^{\varepsilon}(K) \to \mathcal{O}_{q^t}^\varepsilon(K)$ be the unique map which is linear with respect to the evaluation of $\mathbb{A}$ at $(q,t)$ and such that $\mathrm{ev}_{q,t}^\varepsilon(\zeta_{\cpa^\tau\!,n}) = \zeta_{q^t\!,n}$ for all $n \in \mathbb{Z}^\varepsilon$. Let $\mathrm{ind}_{q,t}(\varepsilon,\lambda)$ be the unique $U_{q,t}(\mathfrak{g})$-module whose underlying space is $\mathcal{O}_{q^t}^\varepsilon(K)$ and such that $\mathrm{ev}_{q,t}^\varepsilon : {}_\mathbb{A}\mathrm{ind}(\varepsilon,\lambda) \to \mathrm{ind}_{q,t}(\varepsilon,\lambda)$ is a specialization of ${}_\mathbb{A}\mathrm{ind}(\varepsilon,\lambda)$ at $\mathrm{ev}_{q,t}$. The nature of $\mathrm{ind}_{q,t}(\varepsilon,\lambda)$ is given by the following table.
        \begin{center}
        \begin{tabular}{ |c|c|c| } 
         \cline{2-3}
        \multicolumn{1}{c|}{} & $q\neq 1$ & $q = 1$ \\
         \hline
         $t \neq 0$ & $\mathrm{ind}_{q^t}(\varepsilon, \lambda(q,t))$ & $\mathrm{ind}(\varepsilon, t^{-1} \partial_\cpa\lambda(1,t))$ \\
         \hline
         $t = 0$ & $\mathrm{ind}(p^\star_{\lambda(q,0)}, M, \varepsilon)$ & $\mathrm{ind}(p_{\partial_\cpa\lambda(1,0)}, M, \varepsilon)$\\
         \hline
        \end{tabular}
        \end{center}
\end{proposition}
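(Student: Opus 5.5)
Since ${}_\mathbb{A}\mathrm{ind}(\varepsilon,\lambda)$ is a free $\mathbb{A}$-module with basis $(\zeta_{\cpa^\tau\!,n})_{n\in\mathbb{Z}^\varepsilon}$, its specialization at $(q,t)$ is the base change $\mathbb{C}\otimes_{\mathbb{A}}{}_\mathbb{A}\mathcal{O}^\varepsilon(K)$ along $\mathrm{ev}_{q,t}$, with basis the images $\zeta_{q^t\!,n}$. The $U_{q,t}(\mathfrak{g})$-action is obtained by evaluating at $(q,t)$ the $\mathbb{A}$-valued matrix coefficients of the generators of ${}_\mathbb{A}U(\mathfrak{g})$. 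Existence and uniqueness of $\mathrm{ind}_{q,t}(\varepsilon,\lambda)$ are then formal, and the real content is to identify the resulting module in each case of the table. A module over any of the four algebras of the previous proposition is determined by the action of $\theta$ together with a spanning set of the degree-one part ${}_\mathbb{A}V$. For this I will use the basis $1,s_{\mathbb{A},n},s^+_{\mathbb{A},n},s^-_{\mathbb{A},n}$, whose action on $\zeta_{\cpa^\tau\!,n}$ is given explicitly by (\ref{s+-}), with $\theta_\mathbb{A}\zeta_{\cpa^\tau\!,n}=[n]_{\cpa^\tau}\zeta_{\cpa^\tau\!,n}$. So the plan is to evaluate the three scalar functions
$$\eta^{-1}(\lambda+\lambda^{-1}-[2]_{\cpa^\tau}),\qquad \eta^{-1}\bigl(\lambda\cpa^{\tau(1\pm n)}-\lambda^{-1}\cpa^{-\tau(1\pm n)}\bigr)$$
together with $[n]_{\cpa^\tau}$ at each point, and to compare them with (\ref{actionT}), (\ref{actioninducedclassical}), (\ref{actionG0}) and (\ref{actionLT}).

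In the case $q\neq1$, $t\neq0$, the map $\mathrm{ev}_{q,t}$ simply substitutes $\cpa^\tau\mapsto q^t$. The elements $s$, $s^\pm$ evaluate to $\eta(q,t)^{-1}(T_{q^t,n}-[2]_{q^t})$ and $\eta(q,t)^{-1}T^\pm_{q^t,n}$, so comparison with (\ref{actionT}) gives $\mathrm{ind}_{q^t}(\varepsilon,\lambda(q,t))$ immediately. In the case $q\neq1$, $t=0$, we have $\cpa^\tau\mapsto1$, $[n]\mapsto n$ and $\eta(q,0)=h$. The images of $s_{\mathbb{A},n}$ and $s^\pm_{\mathbb{A},n}$ are $h^{-1}(T-2)$ and $h^{-1}T^\pm$, using $\mathrm{ev}(x_\mathbb{A})=h^{-1}(X-1)$ and $\mathrm{ev}(z_\mathbb{A})=h^{-1}Z$. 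The scalars become $h^{-1}(\lambda_0+\lambda_0^{-1}-2)$ and $h^{-1}(\lambda_0-\lambda_0^{-1})$, where $\lambda_0=\lambda(q,0)$, which matches (\ref{actionLT}) for $\mathrm{ind}(p^\star_{\lambda_0},M,\varepsilon)$.

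The cases with $q=1$ are the main obstacle, because numerator and denominator both vanish and one has to take a derivative. I will use $\lambda(1,\tau)=1$ and $\eta(1,\tau)=0$ and expand to first order in $\cpa-1$. Writing $\cpa^\tau=1+\tau(\cpa-1)+O((\cpa-1)^2)$, we get $\eta\sim 2(\cpa-1)$ and $\lambda\sim1+(\cpa-1)\partial_\cpa\lambda(1,\tau)$. Then $\lambda+\lambda^{-1}-[2]_{\cpa^\tau}$ vanishes to second order, so $s$ acts by $0$. For $s^\pm$ the limit is
$$\tfrac12\bigl(2\partial_\cpa\lambda(1,t)+2t(1\pm n)\bigr)=\partial_\cpa\lambda(1,t)+t(1\pm n).$$
At $t\neq0$ I will translate $s^\pm_{\mathbb{A},n}$ through $x\mapsto tH/2$, $z\mapsto itE$: the image of $s^\pm_{\mathbb{A},n}$ is $t(H\mp i(E+F))$ plus a multiple of $\theta$ and scalar corrections depending on $n$. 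Dividing by $t$ and comparing with (\ref{actioninducedclassical}) should yield the parameter $t^{-1}\partial_\cpa\lambda(1,t)$. This bookkeeping of the $\pm\tau[n]$ term and the exact images of $s$, $s^\pm$ is the step I expect to require the most care, and I would check it on a single $n$ first. At $t=0$ the $t(1\pm n)$ term disappears and $s^\pm$ maps to $\delta T^\pm/2$. Comparison with (\ref{actionG0}) then gives $\mathrm{ind}(p_{\partial_\cpa\lambda(1,0)},M,\varepsilon)$, after checking the normalization of $p_\mu$ (the point $\mu/2$) and the factor $2$ in $\delta T^\pm$.

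Finally, because each of these modules is determined by the action of $\theta$ and the degree-one generators on the basis $\zeta_n$, matching these coefficients identifies the module in each of the four cases, which proves the table.
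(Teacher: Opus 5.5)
Your proposal is essentially the paper's proof. You reduce to the action of $\theta_\mathbb{A},1,s_{\mathbb{A},n},s^\pm_{\mathbb{A},n}$ on $\zeta_{\cpa^\tau\!,n}$, evaluate these elements and the scalars in (\ref{s+-}) at each $(q,t)$, and compare with the explicit action formulas. Your first-order expansion correctly reproduces the paper's constants $\kappa_n(1,t)=0$ and $\kappa_n^\pm(1,t)=\partial_{\cpa}\lambda(1,t)+t(1\pm n)$.

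One slip needs fixing. At $(1,0)$ you have $\mathrm{ev}_{1,0}(s^\pm_{\mathbb{A},n})=\delta X-\delta X^*\mp 2\delta Z=\delta T^\pm$ exactly, because $[2]_1=2$ and $\tau[n]_{\cpa^\tau}$ vanishes; it is not $\delta T^\pm/2$. So by (\ref{actionG0}) the parameter is $\partial_{\cpa}\lambda(1,0)$ with no renormalization. Your factor $1/2$ would instead give $p_{2\partial_{\cpa}\lambda(1,0)}$, and the convention $p_\mu=\mu/2$ cannot absorb it, since it is already built into (\ref{actionG0}).

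The step you left vague at $(1,t)$ is also exact. There $\mathrm{ev}_{1,t}(s^\pm_{\mathbb{A},n})=tH\mp 2itE\pm tn=t(H\mp i(E+F))\pm t(n-\theta)$, and the last term annihilates $\zeta_n$. Matching $t(\mu+1\pm n)\zeta_{n\pm 2}$ with $\kappa_n^\pm(1,t)\zeta_{n\pm2}$ then gives $\mu=t^{-1}\partial_{\cpa}\lambda(1,t)$.
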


\begin{proof} As an $\mathbb{A}$-algebra, ${}_\mathbb{A}U(\mathfrak{g})$ is generated by $\theta_\mathbb{A}, 1, x_\mathbb{A}, x_\mathbb{A}^*, z_\mathbb{A}$. For every $n \in \mathbb{Z}$, the latter can be expressed as $\mathbb{A}$-linear combinations of $\theta_\mathbb{A},1,s_{\mathbb{A},n}, s_{\mathbb{A},n}^+, s_{\mathbb{A},n}^-$, defined by (\ref{actionsA}). Thus, to prove that $\mathrm{ind}_{q,t}(\varepsilon,\lambda)$ coincides with the $U_{q,t}(\mathfrak{g})$-module of the above table, it is enough to check that the images of $\theta_\mathbb{A},1,s_{\mathbb{A},n}, s_{\mathbb{A},n}^+, s_{\mathbb{A},n}^-$ through $\mathrm{ev}_{q,t}$ act in the same way on every $\zeta_{q^t\!,n}$ for both modules. In order to check that, we will need the following table, where $h = \eta(q,0) = 2 \log q$.
\begin{center}
        \begin{tabular}{ |c|c|c|c|c| } 
         \hline
        Value of $(q,t)$ & $\mathrm{ev}_{q,t}(s_{\mathbb{A},n})$ & $\mathrm{ev}_{q,t}(s^\pm_{\mathbb{A},n})$ \\
         \hline\hline
         $q\neq 1, \,t \neq 0$ & $\eta(q,t)^{-1}(T_{q^t} - [2]_{q^t})$ & $\eta(q,t)^{-1}T_{q^t}^\pm$ \\
         \hline\hline
         $q\neq 1, \,t = 0$ & $h^{-1}(T-2)$ & $h^{-1}T^\pm$ \\
         \hline\hline
         $q=1, \,t \neq 0$ & $0$ & $t(H\mp i(E+F) + n-\theta)$ \\
         \hline\hline
         $q=1, \,t = 0$ & $0$ & $\delta T^\pm$ \\
         \hline
        \end{tabular}
    \end{center}
Moreover, the action of these elements on every $\zeta_{q^t\!,n} \in \mathrm{ind}_{q,t}(\varepsilon,\lambda)$ is obtained by evaluating (\ref{s+-}) at $(q,t)$:
\begin{gather*}
    \mathrm{ev}_{q,t}(s_{\mathbb{A},n})\zeta_{q^t\!,n} = \kappa_{n}(q,t) \zeta_{q^t\!,n},\\
    \mathrm{ev}_{q,t}(s^\pm_{\mathbb{A},n})\zeta_{q^t\!,n} = \kappa_{n}^\pm(q,t) \zeta_{q^t\!,n\pm2},
\end{gather*}
where the value of the constants $\kappa_{n}(q,t)$, $\kappa_{n}^\pm(q,t)$ are given by
\begin{center}
        \begin{tabular}{ |c|c|c|c|c| } 
         \hline
        Value of $(q,t)$ & $\kappa_{n}(q,t)$ & $\kappa_{n}^\pm(q,t)$ \\
         \hline\hline
         $q\neq 1, \,t = 0$ & $h^{-1}(\lambda(q,0) + \lambda(q,0)^{-1}-2)$ & $h^{-1}(\lambda(q,0) - \lambda(q,0)^{-1})$ \\
         \hline\hline
         $q=1, \,t \neq 0$ & $0$ & $\partial_\cpa \lambda(1,t) + t(1\pm n)$ \\
         \hline\hline
         $q=1, \,t = 0$ & $0$ & $\partial_\cpa \lambda(1,0)$ \\
         \hline
        \end{tabular}
    \end{center}
if $q= 1$ or $t= 0$; otherwise they can be red from (\ref{s+-}) directly. To conclude the proof, just compare these tables with the formulas (\ref{actioninducedclassical}), (\ref{actionG0}), (\ref{actionT}), (\ref{actionLT}).
\end{proof}

A similar statement can be made for the analogues of the discrete series. Let us fix $n \in \mathbb{Z}_{>0}$ and write $\varepsilon = (-1)^{n+1}$. One can check that $${}_\mathbb{A}\mathcal{O}^{\varepsilon}(K) \cap {}_\mathbb{F}D_{\cpa^\tau}^+(1,n)\qquad \text{and}\qquad {}_\mathbb{A}\mathcal{O}^{\varepsilon}(K) \cap {}_\mathbb{F}D_{\cpa^\tau}^-(1,n)$$ are stable by the action of ${}_\mathbb{A}U(\mathfrak{g})$. Let us denote by ${}_\mathbb{A}D^+(n)$ and ${}_\mathbb{A}D^-(n)$ the resulting ${}_\mathbb{A}U(\mathfrak{g})$-modules. Next, for every $q\in \mathbb{R}_+^*$, we write
$$D_{q,t}^\pm(n) = \begin{cases}
    D_{q^t}^\pm(1,n) & (q\neq 1)\\
    D^\pm (n) & (q= 1)
\end{cases}$$
when $t\neq 0$ and
$$D_{q,0}^\pm(n) = \begin{cases}
    \bigoplus_{m \in \pm\mathbb{Z}^\varepsilon_{>n}} \mathrm{ind}(p^\star_1,K,n) \subset \mathrm{ind}(p^\star_1, M, \varepsilon) & (q\neq 1)\\
    \bigoplus_{m \in \pm \mathbb{Z}^\varepsilon_{>n}} \mathrm{ind}(p_0,K,n) \subset \mathrm{ind}(p_0, M, \varepsilon) & (q=1)
\end{cases}$$

Applying the previous proposition to the case $\lambda = \cpa^{n\tau}$, we obtain the following.

\begin{proposition}
    For every $(q,t) \in \mathbb{R}_+^* \times \mathbb{R}$, the restrictions of $\mathrm{ev}_{q,t}^\varepsilon$ to the $\mathbb{A}$-span of $\{\zeta_{\cpa^\tau\!,m} : m \in \mathbb{Z}^\varepsilon_{\pm m >n}\}$, viewed as $\mathrm{ev}_{q,t}$-linear maps
    $${}_\mathbb{A}D^\pm(n) \to D_{q,t}^\pm(n),$$
    define specializations of ${}_\mathbb{A}D^\pm(n)$ at $(q,t)$.
\end{proposition}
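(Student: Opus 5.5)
The plan is to deduce the statement from the preceding proposition, applied with $\lambda = \cpa^{n\tau}$. This $\lambda$ is an invertible element of $\mathbb{A}$ equal to $1$ wherever $\cpa = 1$, so the proposition applies. The key steps are:
\begin{enumerate}
\item identify ${}_\mathbb{A}D^\pm(n)$ as an $\mathbb{A}$-submodule of ${}_\mathbb{A}\mathrm{ind}(\varepsilon,\cpa^{n\tau})$;
\item compute the image of its $\mathbb{A}$-basis under $\mathrm{ev}_{q,t}^\varepsilon$;
\item check that this image is a $U_{q,t}(\mathfrak{g})$-submodule of $\mathrm{ind}_{q,t}(\varepsilon,\cpa^{n\tau})$ and coincides with $D^\pm_{q,t}(n)$;
\item verify the universal property of specialization.
\end{enumerate}

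\textbf{Step 1.} By the description of ${}_\mathbb{F}D^\pm_{\cpa^\tau}(1,n)$ (with $\sigma = 1$), the intersection ${}_\mathbb{A}\mathcal{O}^\varepsilon(K)\cap {}_\mathbb{F}D^\pm_{\cpa^\tau}(1,n)$ is exactly the $\mathbb{A}$-span of $\{\zeta_{\cpa^\tau\!,m} : m\in\mathbb{Z}^\varepsilon,\ \pm m>n\}$. This is a free direct summand of ${}_\mathbb{A}\mathcal{O}^\varepsilon(K)$.

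\textbf{Step 2.} The restriction of $\mathrm{ev}^\varepsilon_{q,t}$ to this summand maps it onto $\mathrm{span}(\zeta_{q^t\!,m}: \pm m>n)$. This is the underlying space of $D^\pm_{q,t}(n)$ in each case of the table:
\begin{itemize}
\item for $q^t \neq 1$, it is the span defining $D^\pm_{q^t}(1,n)$;
\item for $q=1$, $t\neq 0$, it is the classical $D^\pm(n)$, since $t^{-1}\partial_\cpa\lambda(1,t)=n$;
\item for $t=0$, $q\neq1$, one has $\lambda(q,0)=1$, so the module is $\mathrm{ind}(p^\star_1,M,\varepsilon)$ and the subspace is the stated direct sum of one-dimensional pieces;
\item for $q=1$, $t=0$, one has $\partial_\cpa\lambda(1,0)=0$, giving $p_0$.
\end{itemize}

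\textbf{Step 3.} By the previous proposition, $\mathrm{ind}_{q,t}(\varepsilon,\cpa^{n\tau})$ is the module listed in its table. Since $\mathrm{ev}_{q,t}^\varepsilon$ is $\mathrm{ev}_{q,t}$-linear, the image of the ${}_\mathbb{A}U(\mathfrak{g})$-stable submodule is stable under $\mathrm{ev}_{q,t}({}_\mathbb{A}U(\mathfrak{g}))=U_{q,t}(\mathfrak{g})$. Hence the restriction is an $\mathrm{ev}_{q,t}$-linear module map ${}_\mathbb{A}D^\pm(n)\to D^\pm_{q,t}(n)$, where $D^\pm_{q,t}(n)$ carries the submodule structure. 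Its actions agree with the intrinsic ones described in Section 2.

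\textbf{Step 4.} This is the only nontrivial point. Let $N$ be a $U_{q,t}(\mathfrak{g})$-module and $f:{}_\mathbb{A}D^\pm(n)\to N$ an $\mathrm{ev}_{q,t}$-linear module map. Since ${}_\mathbb{A}D^\pm(n)$ is free over $\mathbb{A}$ on the $\zeta_{\cpa^\tau\!,m}$, define $\bar f(\zeta_{q^t\!,m}) = f(\zeta_{\cpa^\tau\!,m})$. This $\bar f$ is the unique $\mathbb{C}$-linear map with $\bar f\circ \mathrm{ev} = f$, and it is surjectively determined because $\ker\mathrm{ev}^\varepsilon_{q,t} = \mathfrak{m}_{q,t}\cdot{}_\mathbb{A}D^\pm(n)$. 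It remains to show that $\bar f$ is $U_{q,t}(\mathfrak{g})$-linear. For $a\in {}_\mathbb{A}U(\mathfrak{g})$ we have
$$\bar f(\mathrm{ev}(a)\mathrm{ev}(v)) = \bar f(\mathrm{ev}(av)) = f(av) = \mathrm{ev}(a)f(v).$$
Since $\mathrm{ev}_{q,t}$ is surjective onto $U_{q,t}(\mathfrak{g})$, this gives the required linearity.

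The main obstacle is the boundary case $t=0$ (or $q=1$). There the specialized actions of $s^\pm_{\mathbb{A},m}$ at the edge $m=\pm(n+1)$ must vanish, so that the span is genuinely a submodule. This amounts to checking that the constant $\kappa^\mp_{\pm(n+1)}(q,t)$, which is $\eta^{-1}(\lambda\cpa^{\tau(1\mp m)}-\lambda^{-1}\cpa^{-\tau(1\mp m)})$ evaluated at $(q,t)$, is zero. For $\lambda=\cpa^{n\tau}$ and $m=\pm(n+1)$, this numerator already vanishes identically in $\mathbb{A}$, which settles the issue.
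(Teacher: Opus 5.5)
Your proposal is correct and follows the paper's route. The paper simply applies the preceding proposition with $\lambda = \cpa^{n\tau}$. Your Steps 1--4 spell out what the paper leaves implicit: the free direct-summand observation (so the kernel of the restricted $\mathrm{ev}^\varepsilon_{q,t}$ is $\mathfrak{m}_{q,t}\cdot{}_\mathbb{A}D^\pm(n)$), the identification of the image in all four cases, and the check of the universal property. Your closing ``obstacle'' paragraph is already settled by Step 3, since ${}_\mathbb{A}U(\mathfrak{g})$-stability of ${}_\mathbb{A}D^\pm(n)$ passes automatically to its image under the specialization.
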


\begin{remark}
    The other analogues of the discrete series ${}_\mathbb{F}D^\pm_{\cpa^\tau}(-1,n)$ do not fit in the above picture, because they do not admit classical limits as $q \to 1$. We could however develop the same ideas by restricting to analytic functions defined on $(0,1) \times \mathbb{R}$.
\end{remark}

\subsection{Construction of the continuous field} The previous results suggest that the Hilbert completions of the modules $\mathrm{ind}_{q,t}\!\left(\varepsilon, \cpa^\lambda\right)$, for fixed $(\varepsilon, \lambda) \in\{-1,1\}\times i\mathbb{R}$, and $D^\pm_{q,t}(n)$, for a given $n \in \mathbb{Z}_{>0}$, should form continuous families of representations of a certain continuous field of C*-algebras $C^*_r(\mathbf{G})$ over $\mathbb{R}_+^* \times \mathbb{R}$ with fibers
\begin{equation}\label{fibers} C^*_r(\mathbf{G})_{q,t} = \begin{cases}
    C^*_{q^t,r}(G) &\text{($q\neq 1$, $t\neq 0$)}\\
    C^*(G_0^\star) & \text{($q\neq 1$, $t= 0$)}\\
    C^*_r(G) & \text{($q =1$, $t \neq 0$)}\\
    C^*(G_0) & \text{($q =1$, $t =0$)}.
\end{cases}\end{equation}
The aim of the present subsection is to construct this continuous field.

Let us outline our strategy. First, we define a certain locally compact Hausdorff space $S$ and equip it with an open continuous map $\mathcal{L} : S \to \mathbb{R}_+^*\times \mathbb{R}$, called the location map. Roughly, the fiber $S_{q,t}$ of $\mathcal{L}$ over a base point $(q,t)$ parametrizes the (analogues) of the principal and discrete series associated to the deformation of $G$ with parameter $(q,t)$. Then, we define a locally constant continuous field of Hilbert spaces $\mathcal{H}$ over $S$ and a constraint $\mathscr{C}$ on it. The Fourier transform will give rise to an embedding of C*-algebras $C^*_r(\mathbf{G})_{q,t} \hookrightarrow \mathfrak{K}_\mathscr{C}(\mathcal{H})_{|S_{q,t}}$ for each $(q,t)\in \mathbb{R}_+^*\times \mathbb{R}$. Finally, the continuous field of reduced C*-algebras $C^*_r(\mathbf{G})$ is defined so that these embeddings identify continuous vector fields of the latter with continuous vector fields of $\mathcal{L}_\ast\mathfrak{K}_\mathscr{C}(\mathcal{H})$ (the pushforward of $\mathfrak{K}_\mathscr{C}(\mathcal{H})$ by $\mathcal{L}$, see Proposition \ref{pushforward}).

Let us begin with the definitions of $S$ and $\mathcal{L}$. We define $S$ as the topological disjoint union
$$S = \left(\,\bigcup_{\varepsilon \in\{-1,1\}}\!\!\!\mathrm{Pri}_{\varepsilon}\right)\cup\left(\bigcup_{\substack{\sigma \in \{-1,1\}\\n\in \mathbb{Z}_{\geq 1}}}\!\!\!\mathrm{Dis}_{\sigma, n}^+ \cup \mathrm{Dis}_{\sigma,n}^-\right),$$
where
\begin{itemize}
    \item for every $n\in \mathbb{Z}$, the spaces $\mathrm{Dis}_{\sigma,n}^\pm$ are copies of $\mathbb{R}_+^* \times \mathbb{R}$ if $\sigma = 1$, copies of $(\mathbb{R}_+^*\setminus\{1\})\times \mathbb{R}$ if $\sigma = -1$,
    \item for each $\varepsilon \in \{-1,1\}$, the space $\mathrm{Pri_{\varepsilon}}$ is a copy of the deformation to the normal cone relative to the inclusion $\{1\}\times \mathbb{R} \times \mathbb{U}_+ \subset \mathbb{R}_+^* \times \mathbb{R} \times \mathbb{U}_+$, that is, the     set $(\mathbb{R}_+^*\setminus\{1\})\times \mathbb{R} \times \mathbb{U}_+ \bigsqcup\, \{1\} \times \mathbb{R} \times i\mathbb{R}_+$ equipped with the unique topology satisfying:
        \begin{enumerate}[label = (\arabic*)]
            \item the subspace $(\mathbb{R}_+^*\setminus\{1\})\times \mathbb{R} \times \mathbb{U}_+$ is open and inherits the usual topology,
            \item the map $$(q,t,\lambda) \mapsto \begin{cases}
        (q,t, q^\lambda) & \text{if $q\neq 1$}\\ (1,t,\lambda) &\text{if $q= 1$}
    \end{cases}$$ is a homeomorphism of $\{(q,t,\lambda) \in \mathbb{R}_+^*\times \mathbb{R} \times i\mathbb{R}_+ : |\lambda \log q|<\pi\}$ onto the subspace $(\mathbb{R}_+^*\setminus\{1\})\times \mathbb{R} \times (\mathbb{U}_+ \setminus\{-1\}) \bigsqcup\, \{1\} \times \mathbb{R} \times i\mathbb{R}_+$.
        \end{enumerate}
\end{itemize}
Each of these spaces is naturally equipped with an open continuous map with values in $\mathbb{R}_+^*\times \mathbb{R}$, namely the natural inclusion for $\mathrm{Dis}_{\sigma, n}^\pm$ and the projection onto the first two coordinates for $\mathrm{Pri_{\varepsilon}}$. Assembling these maps yields a continuous open surjection $\mathcal{L} : S \to \mathbb{R}_+^*\times \mathbb{R}$. If $s \in S$, we call $\mathcal{L}(s)$ the location of $s$. For every $(q,t) \in \mathbb{R}_+^*\times \mathbb{R}$, we denote by $S_{q,t}$ the preimage of $(q,t)$ by $\mathcal{L}$. More generally, if $D$ is a subspace of $\mathbb{R}_+^*\times \mathbb{R}$, we write $S_D = \mathcal{L}^{-1}(D)$. Moreover, by a slight abuse of notation, we will consider $\cpa$ and $\tau$ as coordinates on $S$ through $\mathcal{L}$.

\begin{definition}
    An element of $S$ is said to be a continuous parameter if it belongs to $\cup_{\varepsilon \in \{-1,1\}}\mathrm{Pri}_\varepsilon$, otherwise we call it a discrete parameter. If $s \in \mathrm{Dis}^\pm_{\sigma,n}$ for some $(\sigma, n) \in \{-1,1\}\times \mathbb{Z}_{>0}$, then $(n,\pm)$ is called the order of $s$.
\end{definition}

We define a functions $\Lambda$ and $e$ on $S$ as follows.
\begin{gather*}\Lambda(s) = \begin{cases}
    \lambda & \text{($s = (q,t,\lambda) \in \mathrm{Pri}_{\varepsilon},\, q \neq 1$)}\\
    \lambda/t &\text{($s = (1,t,\lambda)\in \mathrm{Pri}_{\varepsilon}, \,t\neq 0$)}\\
    \lambda/t &\text{($s = (1,0,\lambda)\in \mathrm{Pri}_{\varepsilon})$}\\
    \sigma q^{nt} & \text{($s = (q,t) \in \mathrm{Dis}_{\sigma,n}^\pm, \, q\neq 1$)}\\
    n & \text{($s = (1,t) \in \mathrm{Dis}_{1,n}^\pm, \,t\neq 0$})\\
    0 & \text{($s = (1,0) \in \mathrm{Dis}_{1,n}^\pm$})
\end{cases},\qquad
e(s) = \begin{cases}
    \varepsilon & \text{($s \in \mathrm{Pri}_\varepsilon$)}\\
    (-1)^{n+1} & \text{($s \in \mathrm{Dis}_{\sigma,n}^\pm$).}
\end{cases}\end{gather*}
As explained before, $S$ can be viewed as a parameter space for irreducible representations. The function $\Lambda$ then plays the role of the infinitesimal character. For a given $s \in S$, we call $e(s)$ the parity of $s$ and we say that $s$ is even (respectively odd) if $e(s) = 1$ (respectively $e(s) = -1$). Moreover, for any $s \in S$, we write
$$\mathbb{Z}(s) = \begin{cases}
    \mathbb{Z}^\varepsilon &\text{($s \in \mathrm{Pri}_\varepsilon$)}\\
    \pm \mathbb{Z}^{e(s)}_{>n} &\text{($s \in \mathrm{Dis}_{\sigma,n}^\pm$)},
\end{cases}$$
which we somehow interpret as the set of $K$-types of $s$.

Now let us define the continuous field $\mathcal{H}$. For each $q \in \mathbb{R}_+^*$ and $\varepsilon \in \{-1,1\}$, let us denote by $L^2_q(K)^\varepsilon$ the completion of $\mathcal{O}_q^\varepsilon(K)$ with respect to the unique inner product $(\cdot|\cdot)_{0,q}$ such that
$$(\zeta_{q,n} | \zeta_{q,m})_{0,q} = \frac{2\delta_{n,m}}{q^n+q^{-n}} \qquad (n,m \in \mathbb{Z}^\varepsilon).$$
For any $(q,n) \in \mathbb{R}_+^* \times\mathbb{Z}_{\geq 0}$, if $\varepsilon = (-1)^{n+1}$, then let us denote by $\mathcal{D}_{q,n,\pm}$ the Hilbert completions of the subspaces
$\mathrm{span}(\zeta_{q,m} : m \in \pm \mathbb{Z}_{>n}^{\varepsilon})$
of $\mathcal{O}_q^\varepsilon(K)$, equipped with the inner product $(\cdot |\cdot)_{n,q}$ characterized by
\begin{gather*}
(\zeta_{q,m}| \zeta_{q,m'})_{n,q} = \frac{2\delta_{m,m'}}{q^{m}+q^{-m}}\prod_{\substack{l = 1 \\l \text{ odd}}}^{|m|-n} \frac{[l-1]_q}{[l-1+2n]_q} \quad (m \in \mathbb{Z}_{>0}^{\mathrm{odd}}).\end{gather*}
We define $\mathcal{H}$ as the unique continuous field of Hilbert spaces over $S$ satisfying:
\begin{itemize}
    \item $\mathcal{H}_s = L^2_{\cpa^\tau(s)}(K)^{e(s)}$ for every continuous parameter $s$,
    \item $\mathcal{H}_s = \mathcal{D}_{\cpa^\tau(s),n,\pm}$ for every discrete parameter $s$ of order $(n,\pm)$,
    \item for every $n \in \mathbb{Z}$, the vector field $s \in S\mapsto \mathbf{1}_{\mathbb{Z}(s)}(n) \zeta_{\cpa^\tau(s),n}$ is continuous.
\end{itemize}
Since it is trivial on each connected component, $\mathcal{H}$ is locally trivial. In order to simplify the notations, for every $s \in S$, we denote by $(\cdot |\cdot)_s$ and $\Vert \cdot\Vert_s$ the inner product and norm on $\mathcal{H}_s$.

We define the constraint $\mathscr{C}$ on $\mathcal{H}$ as follows. The decomposition of the fibers of $\mathcal{H}$ with respect to $\mathscr{C}$ is given by:
\begin{itemize}
    \item $\mathcal{H}_s = \mathcal{D}_{\cpa^\tau(s),0,+} \bigoplus \mathcal{D}_{\cpa^\tau(s),0,-}$ if $s$ is an odd continuous parameter such that $\tau(s) \neq 0$ and $\Lambda(s) \in \mathbb{R}$,
    \item $\mathcal{H}_s = \bigoplus_{n \in \mathbb{Z}(s)} \mathbb{C}\zeta_{n}$ if $\tau(s) = 0$ and $\Lambda(s) \in \mathbb{R}$,
    \item for all other $s$, the decomposition is trivial.\end{itemize}

The embeddings $C^*_r(\mathbf{G})_{q,t} \hookrightarrow \mathfrak{K}_\mathscr{C}(\mathcal{H})_{|S_{q,t}}$ will be isomorphisms for $t\neq 0$, but not at $t=0$, due to the degeneracy of the principal series as $t\to 0$ (see the previous subsection and the remarks after Proposition \ref{spectrumGstar}). In order to prove that the continuous field $C^*_r(\mathbf{G})$ is well-defined near the edge $t=0$, we will need to characterize the image of these embeddings. For that purpose, we introduce some extra material. For every $(\varepsilon,n) \in \{-1, 1\}\times\mathbb{Z}^{-\varepsilon}_{> 0}$ and $m\in \pm\mathbb{Z}_{>n}^\varepsilon$, let $J_{n,\pm}$ denote the surjective linear operator $L^2(K)^\varepsilon \to \mathcal{D}_{1,n,\pm}$ defined by
    $$J_{n,\pm}(\zeta_m) = \mathbf{1}_{\mathbb{Z}^\varepsilon_{>n}}(\pm m) \zeta_m \qquad (m \in \mathbb{Z}^\varepsilon).$$
For any $(q,n) \in \mathbb{R}_+^*\times\mathbb{Z}_{>0}$, for every continuous parameter $s \in S_{q,0}$ and every discrete parameter $s' \in S_{q,0}$ of order $(n,\pm)$ such that $\Lambda(s) = \Lambda(s')$ and $e(s) = e(s')$, we write $J(s',s)$ for the operator $J_{n,\pm} : \mathcal{H}_s \to \mathcal{H}_{s'}$. If $V$ is any locally closed subspace of $\mathbb{R}^*_+\times\mathbb{R}$, a vector field $\sigma$ of $\mathfrak{K}_\mathscr{C}(\mathcal{H})_{|S_V}$ is said to be $J$-equivariant if $J(s',s) \sigma_s = \sigma_{s'}J(s',s)$ for every pair $(s,s') \in S_V\times_VS_V$ as above. We denote by $\mathfrak{K}_\mathscr{C}(\mathcal{H})_{|S_V, J}$ the C*-algebra of continuous $J$-equivariant vector fields of $\mathfrak{K}_\mathscr{C}(\mathcal{H})_{|S_V}$ vanishing at infinity. Assuming that $\tau(V) = \{0\}$, denoting by $S_{V}^c$ the set of continuous parameters of $S_V$, one can check that the morphism $\mathfrak{K}_\mathscr{C}(\mathcal{H})_{|S_{V}} \to \mathfrak{K}_\mathscr{C}(\mathcal{H})_{|S_{V}^c}$ of restriction to $S_{V}^c$ induces a C*-algebra isomorphism
\begin{equation}\label{Jequ}\mathfrak{K}_\mathscr{C}(\mathcal{H})_{|S_{V}, J} \longrightarrow \mathfrak{K}_\mathscr{C}(\mathcal{H})_{|S_{V}^c}.\end{equation}
Moreover, we have the following.

\begin{lemma}\label{subfieldJ}
    There exists a unique continuous subfield of C*-algebras of $\mathcal{L}_*\mathfrak{K}_\mathscr{C}(\mathcal{H})$ whose fiber at each $(q,t) \in \mathbb{R}_+^* \times \mathbb{R}$ is $\mathfrak{K}_\mathscr{C}(\mathcal{H})_{|S_{q,t}, J}$.
\end{lemma}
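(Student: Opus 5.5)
Uniqueness is immediate from the first of the two facts recalled in Section 1.1 (via \cite{Dixmier}*{§10.2.4}), so the work is existence. By that same fact, it suffices to show the following: for every $(q_0,t_0)$ and every $a \in \mathfrak{K}_\mathscr{C}(\mathcal{H})_{|S_{q_0,t_0},J}$, there is $\sigma \in \mathfrak{K}_\mathscr{C}(\mathcal{H})$ with $\sigma_{|S_{q_0,t_0}} = a$ and $\sigma_{|S_{q,t}}$ $J$-equivariant for all $(q,t)$. The $\ast$-algebra condition is automatic, since $J$-equivariance is preserved by products and adjoints. For adjoints this requires that the relation $J\sigma_s=\sigma_{s'}J$ also gives $J\sigma_s^*=\sigma_{s'}^*J$. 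I will check this using that $J(s',s)$ is, up to the diagonal weights, an orthogonal projection onto a span of basis vectors. In that case $\sigma_s$ commutes with the corresponding projection, because both are block-diagonal for the constraint decomposition at $\tau=0$, $\Lambda\in\mathbb{R}$.

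I will split into two cases. If $t_0\neq 0$, no pair $(s,s')$ lies over points with $\tau\neq 0$. So $J$-equivariance is a vacuous condition on a neighbourhood $W$ of $(q_0,t_0)$ in $\mathbb{R}_+^*\times\mathbb{R}$ avoiding $\tau=0$. I will lift $a$ to some element of $\mathfrak{K}_\mathscr{C}(\mathcal{H})_{|S_{q_0,t_0}}$ and extend it to $\mathfrak{K}_\mathscr{C}(\mathcal{H})$ using the surjectivity of restriction to the closed set $S_{q_0,t_0}$ (second fact of §1.1). Multiplying by a cutoff $\varphi\circ\mathcal{L}$ supported in $W$ then gives the required section.

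If $t_0=0$, I will use the isomorphism (\ref{Jequ}) to reduce to the continuous parameters. On the closed set $S_{\tau=0}$, a $J$-equivariant section is determined by its restriction to $S^c_{\tau=0}$, and the discrete parameters of order $(n,\pm)$ receive $J_{n,\pm}\sigma_s J_{n,\pm}^{*}$ (suitably normalized), evaluated at the continuous $s$ with $\Lambda(s)=\Lambda(s')=\sigma q^{0}=\sigma$. Taking $V=\{\tau=0\}$ (or a closed neighbourhood of $(q_0,0)$ inside it) in (\ref{Jequ}), I will first extend $a$ from $S^c_{q_0,0}$ to a continuous section $b$ of $\mathfrak{K}_\mathscr{C}(\mathcal{H})_{|S^c_{\tau=0}}$. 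This uses the second fact of §1.1 applied on $S^c_{\tau=0}$, which is closed in $S_{\tau=0}$. Via (\ref{Jequ}), $b$ then defines a continuous $J$-equivariant section of $\mathfrak{K}_\mathscr{C}(\mathcal{H})_{|S_{\tau=0}}$ whose value on $S_{q_0,0}$ is $a$. The continuity of this glued section at the discrete parameters is exactly the content of (\ref{Jequ}) being a well-defined isomorphism. Next, I will extend this section from the closed set $S_{\tau=0}$ to all of $S$, again by the second fact. Over points with $\tau\neq 0$ the $J$ condition is vacuous, so the extension is automatically $J$-equivariant everywhere. Note that the extension, not any cutoff, is what keeps the section $J$-equivariant on the fibres $\tau=0$. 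Finally, I will multiply by a compactly supported function of the location to ensure vanishing at infinity, if this is needed for elements of $\Gamma_0$.

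The main obstacle is the continuity step hidden in (\ref{Jequ}). One must check that, for fixed $\sigma=\pm1$, as $s'$ ranges over discrete parameters with $\tau=0$ and continuous $s$ converge to $p^\star_{\sigma}$, the compressions $J\sigma_sJ^*$ vary continuously in the local fixations of $\mathscr{C}$. At such points the constraint splits $\mathcal{H}_s$ into lines $\mathbb{C}\zeta_n$, and $J_{n,\pm}$ is a coordinate projection onto some of these lines. Since $\mathcal{H}$ is trivialized by the $\zeta_n$ on each component, continuity will reduce to continuity of the matrix coefficients $(\sigma_s\zeta_m|\zeta_{m'})$. I will verify this directly, and will also check that the normalization constants of $(\cdot|\cdot)_{n,q}$ at $\tau=0$ (where $q^\tau=1$ and the weights become $1$) are compatible. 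If this step resists, I would instead prove the lemma by exhibiting generating continuous sections, namely rank-one fields built from $\mathbf{1}_{\mathbb{Z}(s)}(n)\zeta_{\cpa^\tau(s),n}$ times functions of $\Lambda$ that are $J$-compatible, and show they are dense in each fibre.
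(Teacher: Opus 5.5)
Your proposal is correct and is essentially the paper's proof: for $t_0\neq 0$ the $J$-condition is vacuous away from $\tau=0$, and your cutoff $\varphi\circ\mathcal{L}$ does the job of the paper's extension by zero from $S\setminus\tau^{-1}(0)$. For $t_0=0$ you extend $a$ over the continuous parameters of $\tau^{-1}(0)$, lift through the isomorphism (\ref{Jequ}) (whose injectivity on $S_{q_0,0}$ gives back $a$), and extend to $S$, exactly as the paper does. Your extra check that $J$-equivariance is stable under adjoints is correct and is left implicit in the paper; it works because all the operators involved are diagonal in the $\zeta_m$ at $\tau=0$, $\Lambda\in\mathbb{R}$. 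The one slip is that the weights of $(\cdot|\cdot)_{n,1}$ are not $1$ at $\tau=0$, but this is harmless for the same diagonality reason.
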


\begin{proof}
    Let $(q,t) \in \mathbb{R}_+^*\times \mathbb{R}$ and let $a \in \mathfrak{K}_\mathscr{C}(\mathcal{H})_{|S_{q,t}, J}$. Assume first that $t \neq 0$. Then $\mathfrak{K}_\mathscr{C}(\mathcal{H})_{|S_{q,t}, J} = \mathfrak{K}_\mathscr{C}(\mathcal{H})_{|S_{q,t}}$ and there exists $\tilde a \in \mathfrak{K}_\mathscr{C}(\mathcal{H})_{|S\setminus\tau^{-1}(0)} = \mathfrak{K}_\mathscr{C}(\mathcal{H})_{|S\setminus\tau^{-1}(0), J}$ such that $\tilde a_{|S_{q,t}} = a$. Extending $\tilde a$ by zero on $\tau^{-1}(0)$, we get an element $b \in \mathfrak{K}_\mathscr{C}(\mathcal{H})_J$ whose restriction to $S_{q,t}$ is $a$. Now, assume that $t= 0$. Let $a'$ be an element of $\mathfrak{K}_\mathscr{C}(\mathcal{H})_{|\tau^{-1}(0)^c}$ whose restriction to $S_{q,0}^c$ is $a_{|S_{q,0}^c}$. Because of the isomorphism (\ref{Jequ}), there is exists an element $\tilde a$ of $\mathfrak{K}_\mathscr{C}(\mathcal{H})_{|\tau^{-1}(0), J}$ such that $\tilde a_{|\tau^{-1}(0)^c} = a'$. Moreover, by the injectivity of (\ref{Jequ}) for $S_V = S_{q,0}$, we have $\tilde a_{|S_{q,0}} = a$. If $b \in \mathfrak{K}_\mathscr{C}(\mathcal{H})$ is any extension of $\tilde{a}$, then $b$ is $J$-equivariant and its restriction to $S_{q,0}$ is $a$.

    In each case, there exists $b \in \mathfrak{K}_\mathscr{C}(\mathcal{H})_J$ such that $(\mathcal{L}_*b)_{q,t} = a$, which concludes the proof.
\end{proof}

We now come to the definition of the embeddings $C^*_r(\mathbf{G})_{q,t} \hookrightarrow \mathfrak{K}_\mathscr{C}(\mathcal{H})_{|S_{q,t}}$. We adopt here Notation (\ref{fibers}), even if the continuous field of C*-algebras is not yet defined. The construction of these embeddings will depend on the values of $q$ and $t$ so we treat each case separately.
\vspace{6 pt}

\fbox{Case $q \neq 1$, $t \neq 0$.} Consider $q\in \mathbb{R}_+^* \setminus\{1\}$. If $s \in S_{q,1}$ is a continuous parameter, let $\pi_s$ denote the representation of $C^*_{q,r}(G)$ on $\mathcal{H}_s$ obtained by completion of the $(\mathfrak{g},K)_q$-module $\mathrm{ind}_q(e(s),\Lambda(s))$. We recall that the inner product $(\cdot|\cdot)_{0,q}$ on $\mathcal{O}^{e(s)}(K)$ is $U_q(\mathfrak{g})$-invariant, see \cite{YGE}*{§6}. On the other hand, for every $(\sigma,n) \in \{-1,1\}\times \mathbb{Z}_{>0}$, the inner product $(\cdot|\cdot)_{n,q}$ turns $D^+_q(\sigma,n)$ and $D^-_q(\sigma,n)$ into unitary $(\mathfrak{g},K)_q$-modules (this can be obtained from \cite{DCDTsl2R}*{Proposition 3.14} by explicit computations). If $s \in S_{q,1}\cap \mathrm{Dis}_{\sigma,n}^\pm$ then we denote by $\pi_s$ the representation of $C^*_{q,r}(G)$ on $\mathcal{H}_s$ corresponding to the completion of $D^\pm_q(\sigma,n)$.

Each of these representations of $C^*_{q,r}(G)$ is irreducible except when $s$ is an odd continuous parameter such that $\Lambda(s) \in \{-1,1\}$. In that case, $\pi_s$ is the sum of two irreducible subrepresentations, namely $\mathcal{D}_{q,0,+}$ and $\mathcal{D}_{q,0,-}$. Thus, assembling everything and using that $C^*_{q,r}(G)$ is liminal \cite{YGE}*{Corollary 5.10}, we get a C*-algebra morphism

$$\pi_{q,1} :\quad C^*_{q,r}(G) \to \prod_{s \in S_{q,1}}\mathfrak{K}(\mathcal{H}_s), \quad x \mapsto (\pi_s( x))_{s \in S_{q,1}}.$$
\begin{theorem} \label{piq1}
    The map $\pi_{q,1}$ induces an isomorphism $C^*_{q,r}(G) \to \mathfrak{K}_\mathscr{C}(\mathcal{H})_{|S_{q,1}}$.
\end{theorem}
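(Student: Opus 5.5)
The plan is to show three things: $\pi_{q,1}$ lands in $\mathfrak{K}_\mathscr{C}(\mathcal{H})_{|S_{q,1}}$, it is injective, and its image is everything. Surjectivity will come from the generalized Stone--Weierstrass theorem (Theorem \ref{StoneW}), applied to the corestriction $\varphi : C^*_{q,r}(G) \to \mathfrak{K}_\mathscr{C}(\mathcal{H})_{|S_{q,1}}$. The target is liminal, hence postliminal.

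\textbf{Step 1: the image lies in the field.} Note that $S_{q,1}$ is a disjoint union of copies of $\mathbb{U}_+$ (the sets $\mathrm{Pri}_\varepsilon \cap S_{q,1}$) and of points (the discrete parameters). On each piece $\mathcal{H}$ is the constant field, with basis vectors $\zeta_{q,n}$ giving continuous sections.
\begin{enumerate}[label = (\alph*)]
\item \emph{Fiberwise.} By construction, $\pi_s(x)$ commutes with the decomposition prescribed by $\mathscr{C}$. The only nontrivial case is odd $s$ with $\Lambda(s) = \pm 1$, where $\mathcal{D}_{q,0,\pm}$ are subrepresentations. Each $\pi_s(x)$ is compact because $C^*_{q,r}(G)$ is liminal.
\item \emph{Continuity in $s$ on the circle pieces.} By Lemma \ref{generatorsR_q}, it suffices to check continuity for the generators $T_{q,n}^n$ and $T_{q,n}^{n\pm2,\pm}$. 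By (\ref{actionT}) these act by matrix units $\zeta_{q,n} \mapsto \zeta_{q,n}$ or $\zeta_{q,n\pm2}$, with coefficients $\lambda+\lambda^{-1}$ and $\lambda q^{1\pm n} - \lambda^{-1}q^{-1\mp n}$. These coefficients are continuous in $\lambda \in \mathbb{U}_+$, while the norms $\Vert\zeta_{q,n}\Vert_s$ do not depend on $\lambda$.
\item \emph{From generators to everything.} Hence $\pi_{q,1}(R_q(\mathfrak{g},K))$ consists of finite-rank continuous sections. Density of $R_q(\mathfrak{g},K)$ in $C^*_{q,r}(G)$ and closedness of $\Gamma_0$ then give $\pi_{q,1}(x) \in \Gamma$ for all $x$.
\item \emph{Vanishing at infinity.} Only needed over the discrete parameters, where it follows from $x$ being a norm limit of elements of $R_q(\mathfrak{g},K)$. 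Each such element is killed by all but finitely many $\pi_s$, because it is supported on finitely many $K$-types and $\mathbb{Z}(s)$ moves off to infinity as $n \to \infty$.
\end{enumerate}

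\textbf{Step 2: injectivity.} By Proposition \ref{Irrepsqt}, every irreducible representation of $C^*_{q,r}(G)$ occurs as a subrepresentation of some $\pi_s$. So the kernel of $\pi_{q,1}$ is contained in the intersection of all primitive ideals, which is zero.

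\textbf{Step 3: the hypotheses of Theorem \ref{StoneW}.} Lemma \ref{irrepsK} describes the spectrum of $\mathfrak{K}_\mathscr{C}(\mathcal{H})_{|S_{q,1}}$ as the pairs $(s,j)$ with $j$ a summand of the $\mathscr{C}$-decomposition of $\mathcal{H}_s$. The pullbacks $\varphi^*\pi_{(s,j)}$ are:
\begin{itemize}
\item the completions of $\mathrm{ind}_q(\varepsilon,\lambda)$ with $\lambda \in \mathbb{U}_+$ and $(\varepsilon,\lambda) \neq (-1,\pm1)$;
\item the two pieces $\mathcal{D}_{q,0,\pm}$ of $\mathrm{ind}_q(-1,\pm1)$;
\item the completions of $D_q^\pm(\sigma,n)$ with $n \geq 1$.
\end{itemize}
Since $\mathcal{D}_{q,0,\pm}$ are exactly $D_q^\pm(\pm1,0)$, this list is precisely that of Proposition \ref{Irrepsqt}. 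That proposition says these are irreducible and pairwise inequivalent, which gives conditions (i) and injectivity in (ii). Surjectivity in (ii) is also the completeness statement of Proposition \ref{Irrepsqt}. Theorem \ref{StoneW} then shows that $\varphi$ is an isomorphism.

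\textbf{Main obstacle.} I expect the delicate point to be the careful identification in Step 3. One must check that the parametrization by $S_{q,1}$ matches Proposition \ref{Irrepsqt} exactly: $\lambda$ ranges over the closed half-circle $\mathbb{U}_+$, both even endpoints $\lambda = \pm1$ are included, and the odd endpoints split. One must also check that the inner products $(\cdot|\cdot)_{n,q}$ used in $\mathcal{H}$ are the invariant ones, so that the $\pi_s$ really are $\ast$-representations of $C^*_{q,r}(G)$ and not merely of $C^*_q(G)$. The second point rests on the spectrum description in \cite{DCinduction} showing that these representations are weakly contained in $L^2_q(G)$; I would take that as given through Proposition \ref{Irrepsqt}.
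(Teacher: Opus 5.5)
Your proof is correct and follows the paper's own route. You check continuity and vanishing at infinity on the generators of Lemma \ref{generatorsR_q} using (\ref{actionT}), handle the constraint through the splitting of $\mathrm{ind}_q(-1,\pm1)$ into $\mathcal{D}_{q,0,\pm}$, and conclude with Theorem \ref{StoneW} by matching Lemma \ref{irrepsK} against Proposition \ref{Irrepsqt}; your separate injectivity step is harmless but redundant, since Theorem \ref{StoneW} already yields an isomorphism.
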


    \begin{proof}
        First, let us check that the image of $C^*_{q,r}(G)$ through $\pi_{q,1}$ lies in $\mathfrak{K}(\mathcal{H})_{|S_{q,1}}$. By Lemma \ref{generatorsR_q} and because the image of $R_q(\mathfrak{g},K)$ in $C^*_{q,r}(G)$ is dense, this reduces to proving that for every $n \in \mathbb{Z}$, the images of $T^{n}_{q,n}$, $T^{n\pm2,\pm}_{q,n}$ via $\pi_{q,1}$ are continuous on $S_{q,1}$ and vanish at infinity. For any $s \in S_{q,1}$ and $n \in \mathbb{Z}$, we have
        \begin{gather*}
            \pi_s(T^{n}_{q,n})  = (\Lambda(s)+\Lambda(s)^{-1})E_n^n(s),\\
            \pi_s(T^{n\pm 2,\pm}_{q,n})  = (q^{1\pm n} \Lambda(s) - q^{-1\mp n}\Lambda(s)^{-1})E^{n\pm 2}_n(s),
        \end{gather*}
        where for every $m \in \mathbb{Z}$, the operator $E_n^m(s)$ is defined by
        \begin{equation}\label{ntom} E_n^m(s) : \psi \longmapsto \begin{cases}
            \Vert\zeta_{q,n}\Vert_s^{-2} (\zeta_{q,n}| \psi)_s \zeta_{q,m}&\text{if $n,m\in \mathbb{Z}(s)$}\\
            0 &\text{otherwise.}
        \end{cases}
        \end{equation}
        These formulas follow from (\ref{actionT}), they show that $\pi_{q,1}(T^{n}_{q,n})$ and $\pi_{q,1}(T^{n\pm 2,\pm}_{q,n})$ are continuous vector fields for every $n \in \mathbb{Z}$. Moreover, we have $\pi_s(T^{n}_{q,n}) = \pi_s(T^{n\pm 2,\pm}_{q,n}) = 0$ if $n \notin \mathbb{Z}(s)$, which proves that $\pi_{q,1}(T^{n}_{q,n})$ and $\pi_{q,1}(T^{n\pm 2,\pm}_{q,n})$ also vanish at infinity for all $n \in \mathbb{Z}$. This concludes the proof of the inclusion $\pi_{q,1}(C^*_{q,r}(G) )\subset \mathfrak{K}(\mathcal{H})_{|S_{q,1}}$.

        Since $\mathcal{D}_{q,0,+}$ and $\mathcal{D}_{q,0,-}$ are subrepresentations of $\pi_s$ for every odd continuous parameter $s \in S_{q,1}$ such that $\Lambda(s) \in \{-1,1\}$, we actually have the inclusion $\pi_{q,1}(C^*_{q,r}(G)) \subset\mathfrak{K}_\mathscr{C}(\mathcal{H})_{|S_{q,1}}$. Then one can check that the conditions of the generalized Stone-Weierstrass theorem (Theorem \ref{StoneW}) are satisfied: use the description of the spectrum of $C^*_{q,r}(G)$ and $\mathfrak{K}_\mathscr{C}(\mathcal{H})_{|S_{q,1}}$ given by Proposition \ref{Irrepsqt} and Lemma \ref{irrepsK} respectively. We conclude that $\pi_{q,1} : C^*_{q,r}(G) \to \mathfrak{K}_\mathscr{C}(\mathcal{H})_{|S_{q,1}}$ is an isomorphism.
    \end{proof}

Let us now fix any pair $(q,t) \in \mathbb{R}^*_+\times \mathbb{R}$ such that $q\neq 1$ and $t \neq 0$. We denote by $\varphi_{q,t}$ the unique bijection $S_{q,t} \to S_{q^t,1}$ which preserves the parity $\varepsilon$, the map $\Lambda$ and the connected components of $S$ (that is, $\varphi_{q,t}^{-1}(\mathrm{Pri}_\varepsilon) \subset \mathrm{Pri}_\varepsilon$ and $\varphi_{q,t}^{-1}(\mathrm{Dis}_{\sigma,n}^\pm)\subset \mathrm{Dis}_{\sigma,n}^\pm$ for every $\varepsilon,\sigma,n$). The pullback by $\varphi_{q,t}$ defines an isomorphism of C*-algebras $\varphi_{q,t}^* : \mathfrak{K}_\mathscr{C}(\mathcal{H})_{|S_{q^t,1}} \to  \mathfrak{K}_\mathscr{C}(\mathcal{H})_{|S_{q,t}}$. The map $\pi_{q,t} = \varphi_{q,t}^* \circ \pi_{q^t,1}$ is then an isomorphism of C*-algebras $C^*_r(\mathbf{G})_{q,t} \to \mathfrak{K}_\mathscr{C}(\mathcal{H})_{|S_{q,t}}$.
\vspace{6pt}

\fbox{Case $q \neq 1$, $t = 0$.} Let us fix $q \neq 1$. We recall that $C^*_r(\mathbf{G})_{q,0} = C^*(G_0^\star)$. For every continuous parameter $s \in S_{q,0}$, let us write $\pi_s$ for the representation of $ C^*(G_0^\star)$ on $\mathcal{H}_s$ defined by $\mathrm{Ind}(p^\star_{\Lambda(s)}, M,e(s))$, see Section 2.4. If $s \in S_{q,1}$ is a discrete parameter of order $(n,\pm)$, we denote by $\pi_s$ the representation of $ C^*(G_0^\star)$ on $\mathcal{H}_s$ given by the direct sum $\bigoplus_{n \in \mathbb{Z}(s)} \mathrm{Ind}(p^\star_{\Lambda(s)},K,n)$, where $\mathrm{Ind}(p^\star_{\Lambda(s)},K,n)$ is identified with the subspace $\mathbb{C}\zeta_{n}$ of $\mathcal{H}_s$ for every $n \in \mathbb{Z}(s)$. Each of the representations $\pi_s$ is a Hilbert direct sum of irreducible representations of $C^*(G_0^\star)$. Since the latter is liminal, see \cite{Dana}*{Section 7.5}, we can define the following C*-algebra morphism
$$\pi_{q,0} :\quad  C^*(G_0^\star)  \to \prod_{s \in S_{q,0}} \mathfrak{K}(\mathcal{H}_s), \quad
       x \mapsto (\pi_s(x))_{s \in S_{q,0}}.$$

\begin{proposition}\label{piq0} The morphism $\pi_{q,0}$ is injective and its image is
    $\mathfrak{K}_\mathscr{C}(\mathcal{H})_{|S_{q,0}, J}$.
\end{proposition}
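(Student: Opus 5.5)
The plan is to run the same scheme as in the proof of Theorem \ref{piq1}, except that at $t=0$ the discrete parameters produce no new irreducible representations. We therefore reduce to the continuous parameters through the isomorphism (\ref{Jequ}).

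\textbf{Step 1: the image lands in $\mathfrak{K}_\mathscr{C}(\mathcal{H})_{|S_{q,0}}$.} Since $C^*(G_0^\star)=K\ltimes C(K\backslash U)$, it is densely spanned by products $e_m f e_n$. Here $e_n$ is the isotypic projection of $K$, and $f$ ranges over $1,T,T^+,T^-$ (these generate $\mathcal{O}(K\backslash U)$, which is dense in $C(K\backslash U)$). By the formulas (\ref{actionLT}), $\pi_s(e_nTe_n)=(\Lambda(s)+\Lambda(s)^{-1})E^n_n(s)$ and $\pi_s(e_{n\pm2}T^\pm e_n)=(\Lambda(s)-\Lambda(s)^{-1})E^{n\pm2}_n(s)$, with $E^m_n$ defined as in (\ref{ntom}), and all other combinations vanish. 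On a discrete parameter $s$ one has $\Lambda(s)=\sigma$, so the $T^\pm$-terms vanish there, consistent with the one-dimensional decomposition. These fields are continuous on $S_{q,0}$ and vanish at infinity, because each lives on a single $K$-type, the norms $\Vert\zeta_n\Vert_s$ are continuous, and $\Lambda$ is continuous on $\mathrm{Pri}_\varepsilon\cap S_{q,0}\cong\mathbb{U}_+$. Each $\pi_s$ decomposes along the constraint: when $\Lambda(s)=\pm1$ (and $\tau(s)=0$), the operators are diagonal in the $\zeta_n$. Hence the image lies in $\mathfrak{K}_\mathscr{C}(\mathcal{H})_{|S_{q,0}}$.

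\textbf{Step 2: $J$-equivariance.} For a continuous parameter $s$ and a discrete parameter $s'$ with the same $\Lambda=\sigma$ and parity, $\pi_{s'}$ is by definition the restriction of $\mathrm{Ind}(p^\star_\sigma,M,\varepsilon)=\bigoplus_n\mathrm{Ind}(p^\star_\sigma,K,n)$ to the $K$-types in $\mathbb{Z}(s')$. Consequently $J(s',s)=J_{n,\pm}$, the coordinate projection onto these lines, intertwines $\pi_s$ and $\pi_{s'}$. This is an identity of representations, so it holds on all of $C^*(G_0^\star)$ and not only on generators.

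\textbf{Step 3: injectivity and surjectivity via (\ref{Jequ}).} Compose with the isomorphism (\ref{Jequ}), taking $V=\{(q,0)\}$. It then suffices to show that $x\mapsto(\pi_s(x))_{s\in S^c_{q,0}}$ is an isomorphism $C^*(G_0^\star)\to\mathfrak{K}_\mathscr{C}(\mathcal{H})_{|S^c_{q,0}}$, which we prove with Theorem \ref{StoneW}. By Lemma \ref{irrepsK}, the irreducible representations of the target are:
\begin{itemize}
\item the full spaces $\mathcal{H}_s$ for $\Lambda(s)\in\mathbb{U}_+\setminus\{\pm1\}$, which pull back to $\mathrm{Ind}(p^\star_{\Lambda(s)},M,e(s))$;
\item the lines $\mathbb{C}\zeta_n$ for $\Lambda(s)=\sigma\in\{\pm1\}$ and $n\in\mathbb{Z}^{e(s)}$, which pull back to $\mathrm{Ind}(p^\star_\sigma,K,n)$.
\end{itemize}
Each integer $n$ appears for exactly one parity, so Proposition \ref{spectrumGstar} shows that the pullback is a bijection onto the spectrum and that every pulled-back representation is irreducible. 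Both algebras are liminal, so Theorem \ref{StoneW} gives the isomorphism. Injectivity of $\pi_{q,0}$ follows immediately, and so does the identification of its image with $\mathfrak{K}_\mathscr{C}(\mathcal{H})_{|S_{q,0},J}$.

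\textbf{Main obstacle.} The delicate point is Step 1 near $\Lambda=\pm1$. We need the pieces of the constraint to be compatible with the local fixations, so that the image really consists of continuous fields of $\mathfrak{K}_\mathscr{C}$. We also need the claimed isomorphism (\ref{Jequ}), which we use as given. To handle the first issue, I would check continuity directly on the generators $E^m_n$, which are manifestly compatible with every splitting into $K$-types.
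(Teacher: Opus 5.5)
Your proposal is correct and follows the paper's proof essentially step by step. You show that the image of a dense generating set consists of continuous, vanishing-at-infinity, $J$-equivariant fields lying fiberwise in the constraint, and then apply Theorem \ref{StoneW} through the isomorphism (\ref{Jequ}) and Proposition \ref{spectrumGstar}, as the paper does. The differences are cosmetic: the paper tests all $fe_n$ with $f\in C(K\backslash U)$, using continuity of $s\mapsto p^\star_{\Lambda(s)}$. Your polynomial generators do not have dense span; they only generate a dense $*$-subalgebra, which still suffices because the target is a C*-algebra. Your ``main obstacle'' is not really one, since membership in $\mathfrak{K}_\mathscr{C}(\mathcal{H})$ just means continuity in $\mathfrak{K}(\mathcal{H})$ plus pointwise membership in the constrained fibers.
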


\begin{proof} First, let us prove that the image of $\pi_{q,0}$ is contained in the space of continuous vector fields of $\mathfrak{K}_\mathscr{C}(\mathcal{H})_{|S_{q,0}}$ vanishing at infinity. For every $n \in \mathbb{Z}$ let us denote by $e_n$ the unique measure on $K$ such that $\langle e_n, \zeta_m\rangle = \delta_{n,m}$ for all $m\in\mathbb{Z}$.

Let $f \in C(K\backslash U)$, $n \in \mathbb{Z}$ and let us show that the image of $fe_n \in C^*(G_0^\star)$ through $\pi_{q,0}$ lies in $\mathfrak{K}_\mathscr{C}(\mathcal{H})_{|S_{q,0}}$. For every $s \in S_{q,0}$, the operator $\pi_s(fe_n)$ has rank at most $1$, so it belongs to $\mathfrak{K}(\mathcal{H}_s)$. Moreover, if $\Lambda(s)\in \{-1,1\}$, then $f$ acts on $\mathcal{H}_s$ as the scalar $f({p}^\star_{\Lambda(s)})$, where ${p}^\star_\sigma \,(\sigma \in \{-1,1\})$ denote the $K$-fixed points of $K\backslash U$, see Section 2.4. Therefore, in that case, $\pi_s(fe_n)$ is proportional to the orthogonal projection onto $\mathbb{C}\zeta_n$ (or vanishes if $n \notin\mathbb{Z}(s)$), which implies that $\pi_s(fe_n) \in\mathfrak{K}_\mathscr{C}(\mathcal{H})_s$. Thus, the continuity of $\pi_{q,0}(fe_n)$ as a vector field of $\mathfrak{K}_\mathscr{C}(\mathcal{H})_{|S_{q,0}}$ only needs to be checked on the connected components of $S_{q,1}$ that are not reduced to a single point, which are the subsets of even and odd continuous parameters of $S_{q,1}$. The field $\mathcal{H}$ is trivial over each of these connected components.  If $s \in S_{q,1}$ is a continuous parameter of parity $(-1)^n$ then $\pi_s(fe_n)$ is the composition of the multiplication by $k \in K \mapsto f({p}^\star_{\Lambda(s)}k)$ and the orthogonal projection onto $\mathbb{C}\zeta_n$; if the parity of $s$ differs from $(-1)^n$, we have $\pi_s(fe_n) = 0$. Since $s \in S_{q,0} \mapsto {p}^\star_{\Lambda(s)} \in K\backslash U$ is continuous, $\pi_{q,0}(fe_n)$ is indeed a continuous vector field of $\mathfrak{K}_\mathscr{C}(\mathcal{H})_{|S_{q,0}}$. Finally, $\pi_s(fe_n) = 0$ if $n \notin \mathbb{Z}(s)$, hence $\pi_{q,0}(fe_n)$ vanishes at infinity.

The linear span of elements of the form $fe_n$ for $f\in C(K\backslash U)$ and $n \in \mathbb{Z}$ is dense in $C^*(G_0^\star)$. We conclude that $\pi_{q,0}(C^*(G_0^\star)) \subset \mathfrak{K}_\mathscr{C}(\mathcal{H})_{|S_{q,0}}$.

By construction, for every continuous parameter $s \in S_{q,0}$ and every discrete parameter $s' \in S_{q,0}$ satisfying $\Lambda(s) = \Lambda(s')$ and $e(s) = e(s')$, the operator $J(s',s)$ intertwines $\pi_s$ and $\pi_{s'}$, so that $\pi_{q,0}(C^*(G_0^\star)) \subset \mathfrak{K}_\mathscr{C}(\mathcal{H})_{|S_{q,0}, J}$. To show that $\pi_{q,0}$ induces an isomorphism of $C^*(G_0^\star)$ onto $\mathfrak{K}_\mathscr{C}(\mathcal{H})_{|S_{q,0}, J}$, we apply the generalized Stone Weierstrass theorem (Theorem \ref{StoneW}). Being isomorphic to $\mathfrak{K}_\mathscr{C}(\mathcal{H})_{|S_{q,0}^c} = \mathfrak{K}_\mathscr{C}(\mathcal{H}_{|S_{q,0}^c})$ via (\ref{Jequ}), the C*-algebra $\mathfrak{K}_\mathscr{C}(\mathcal{H})_{|S_{q,0}, J}$ is liminal and its spectrum, given by Lemma \ref{irrepsK}, is in bijection with that of $C^*(G_0^\star)$, see Proposition \ref{spectrumGstar}.
\end{proof}

\fbox{Case $q=1, \,t\neq 0$.} Let us fix $t\neq 0$. If $s \in S_{1,t}$ is a continuous parameter, let $\pi_s$ be the representation of $C^*_r(G)=C^*_r(\mathbf{G})_{1,t}$ on $\mathcal{H}_s$ corresponding to the completion of the $(\mathfrak{g},K)$-module $\mathrm{ind}(e(s),\Lambda(s))$. For any discrete parameter $s \in S_{1,t}$ of order $(n,\pm)$, let us denote by $\pi_s$ the representation of $C^*_r(G)$ on $\mathcal{H}_s$ given by the completion of $D^\pm(n)$ (note that the inner product on $\mathcal{H}_s$ is well chosen for that purpose). Applying \cite{ClareHigsonCrisp}*{Theorem 6.8} in the particular case of $\mathrm{SL}(2,\mathbb{R})$, we obtain the following.

\begin{proposition}
    The map
    $$\pi_{1,t}: \quad C^*_r(G) \to \prod_{s \in S_{1,t}} \mathfrak{B}(\mathcal{H}_s), \quad x\mapsto (\pi_s(x))_{s \in S_{1,t}}.$$
    induces a C*-algebra isomorphism $C^*_r(G) \to \mathfrak{K}_\mathscr{C}(\mathcal{H})_{|S_{1,t}}$.
\end{proposition}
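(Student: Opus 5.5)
The proof will follow the pattern of Theorem \ref{piq1}, with the Clare--Higson--Crisp description (\cite{ClareHigsonCrisp}*{Theorem 6.8}) supplying the classical input. It has three steps.

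\textbf{Step 1: the image lies in the continuous fields.} I first show that $\pi_{1,t}$ maps $C^*_r(G)$ into $\mathfrak{K}(\mathcal{H})_{|S_{1,t}}$. The dense subalgebra $R(\mathfrak{g},K)$ is generated by the elements $e_m X e_n$, where $X$ ranges over $U(\mathfrak{g})$ and $e_n$ is the $K$-type projector (the classical analogue of Lemma \ref{generatorsR_q}). Using the elements $H \mp i(E+F)$ and $\theta$, it suffices to treat the operators $e_n$ and $e_{n\pm 2}(H\mp i(E+F))e_n$.

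On each $\mathcal{H}_s$, formula (\ref{actioninducedclassical}) gives
\[
\pi_s\bigl(e_{n\pm2}(H\mp i(E+F))e_n\bigr)=(\Lambda(s)+1\pm n)\,E_n^{n\pm2}(s),
\]
with $E_n^m(s)$ defined as in (\ref{ntom}).

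\begin{itemize}
\item On the continuous components, $\Lambda(s)=\lambda/t$ depends continuously on $s$, and $\mathcal{H}$ is trivial there, so these fields are continuous.
\item The discrete components of $S_{1,t}$ are single points, so no continuity check is needed there.
\item Vanishing at infinity follows because the operators vanish unless $n\in\mathbb{Z}(s)$, and $|\Lambda(s)|\to\infty$ is harmless after composing with suitable elements. In practice I would use elements $f\ast e_n$ with $f\in C_c^\infty(G)$, which are killed at large $|\Lambda|$ by the Riemann--Lebesgue behaviour of matrix coefficients, or equivalently by the Paley--Wiener description.
\end{itemize}

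\textbf{Step 2: compatibility with the constraint.} At an odd continuous parameter with $\Lambda(s)=0$, the representation $\mathrm{ind}(-1,0)$ splits as $D^+(0)\oplus D^-(0)$, the limits of discrete series. Hence $\pi_s(x)$ lies in $\mathfrak{K}(\mathcal{D}_{1,0,+})\oplus\mathfrak{K}(\mathcal{D}_{1,0,-})$. This matches the constraint, since $\tau(s)\neq0$ and $\Lambda(s)\in\mathbb{R}$ forces $\Lambda(s)=0$ on $i\mathbb{R}_+$. Compactness of each $\pi_s(x)$ follows from the liminality of $C^*_r(G)$. Together these give $\pi_{1,t}(C^*_r(G))\subset\mathfrak{K}_\mathscr{C}(\mathcal{H})_{|S_{1,t}}$.

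\textbf{Step 3: Stone--Weierstrass.} I apply Theorem \ref{StoneW} to the inclusion $C^*_r(G)\to\mathfrak{K}_\mathscr{C}(\mathcal{H})_{|S_{1,t}}$, whose target is liminal. Lemma \ref{irrepsK} lists the irreducible representations of the target; they pull back to the tempered irreducibles:
\begin{itemize}
\item $\mathrm{ind}(\varepsilon,\Lambda)$ for $\Lambda\in i\mathbb{R}_+$, excluding $(-1,0)$;
\item $D^\pm(0)$;
\item $D^\pm(n)$ for $n\geq1$.
\end{itemize}
Each pullback is irreducible, and by the proposition on tempered simple $(\mathfrak{g},K)$-modules this gives a bijection onto the tempered dual. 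Injectivity of $\pi_{1,t}$ holds because every irreducible representation of $C^*_r(G)$ appears among the $\pi_s$.

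\textbf{Main obstacle.} I expect the hardest part to be the vanishing-at-infinity (surjectivity-type) control in Step 1, that is, showing the continuous fields decay as $|\Lambda|\to\infty$. This is exactly the content of \cite{ClareHigsonCrisp}*{Theorem 6.8}, which identifies $C^*_r(G)$ with the $C_0$-fields in the Fourier picture. My plan is therefore to cite that theorem for $\mathrm{SL}(2,\mathbb{R})$ and only translate its normalizations of inner products into those of $\mathcal{H}_s$.
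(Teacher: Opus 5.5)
Your Steps 2 and 3 are sound. The target is liminal, the pullbacks of its irreducible representations are exactly the tempered irreducibles (each occurring once), and the reducibility of the odd series at $\Lambda(s)=0$ is absorbed by the constraint.

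\textbf{Step 1 fails as written.} This is the step that makes Theorem \ref{StoneW} applicable, and it rests on a false premise. At $q=1$ the Hecke algebra $R(\mathfrak{g},K)$ is \emph{not} a subalgebra of $C^*_r(G)$: its elements are distributions supported on $K$, and they act only as multipliers, in general unbounded ones. Your own formula shows this, since $\pi_s\bigl(e_{n\pm2}(H\mp i(E+F))e_n\bigr)$ has norm $|\Lambda(s)+1\pm n|$, which is unbounded along the continuous components of $S_{1,t}$. Even $e_n$ is not in $C^*_r(G)$: $\pi_s(e_n)=E^n_n(s)$ is a rank-one projection at every continuous parameter of the right parity, so its norm does not vanish at infinity on the non-compact line of continuous parameters. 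The analogy with Lemma \ref{generatorsR_q} and Theorem \ref{piq1} breaks exactly here. $R_q(\mathfrak{g},K)$ lies in $C^*_{q,r}(G)$ because $\mathcal{O}_q(K\backslash U)$ acts by bounded operators and the continuous parameters of $S_{q,1}$ form compact half-circles; neither feature survives at $q=1$. So checking those generators says nothing about whether $\pi_{1,t}(C^*_r(G))\subset\mathfrak{K}_\mathscr{C}(\mathcal{H})_{|S_{1,t}}$.

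\textbf{Your fallback is the paper's whole proof.} Citing \cite{ClareHigsonCrisp}*{Theorem 6.8} for the decay is in fact everything the paper does. That theorem identifies $C^*_r(G)$ with the algebra of $C_0$-families of compact operators over the tempered parameters satisfying the intertwining-operator conditions. For $\mathrm{SL}(2,\mathbb{R})$ this means commuting with the splitting $\mathcal{D}_{1,0,+}\oplus\mathcal{D}_{1,0,-}$ at the odd point $\Lambda=0$, which gives $\mathfrak{K}_\mathscr{C}(\mathcal{H})_{|S_{1,t}}$ once normalizations are matched. Once you invoke it, your Steps 1--3 are redundant.

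\textbf{How to make your route independent.} If you want a genuine alternative through Theorem \ref{StoneW}, repair Step 1 using honest elements of $C^*_r(G)$. The span of the $e_m\ast f\ast e_n$ with $f\in C_c^\infty(G)$ is dense, since Fej\'er means of the $e_n$ give an approximate identity. For such an element:
\begin{enumerate}
\item $\pi_s(e_m\ast f\ast e_n)$ is a multiple of $E^m_n(s)$, and its coefficient is a matrix coefficient depending continuously on $\Lambda(s)$ (by dominated convergence in the compact picture);
\item it vanishes at all but finitely many discrete parameters;
\item it decays as $|\Lambda(s)|\to\infty$ by the Casimir element. $\Omega=H^2+2EF+2FE$ acts on $\mathrm{ind}(\varepsilon,\Lambda)$ by $\Lambda^2-1$, whence $\Vert\pi_s(f)\Vert\leq(1+|\Lambda(s)|^2)^{-1}\Vert\Omega\ast f\Vert_{L^1(G)}$ for $\Lambda(s)\in i\mathbb{R}$.
\end{enumerate}
Passing to the closure then gives the inclusion. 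Your argument would then use only the classification of the tempered dual and the liminality of $C^*_r(G)$, rather than the Plancherel-type structure theorem the paper quotes.
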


\fbox{Case $q=1,\,t= 0$.} For every continuous parameter $s \in S_{1,0}$, let $\pi_s$ be the representation of $C^*(G_0)$ on $\mathcal{H}_s$ defined by $\mathrm{Ind}( p_{\Lambda(s)}, M,e(s))$. For any discrete parameter $s \in S_{q,1}$ of order $(n,\pm)$, we denote by $\pi_s$ the representation of $C^*(G_0)$ on $\mathcal{H}_s$ given by the Hilbert direct sum $\bigoplus_{n \in \mathbb{Z}(s)} \mathrm{Ind}(p_{\Lambda(s)},K,n)$, where $\mathrm{Ind}(p_{\Lambda(s)},K,n)$ is identified with the subspace $\mathbb{C}\zeta_{n}$ of $\mathcal{H}_s$ for every $n \in \mathbb{Z}(s)$. Since $C^*(G_0)$ is liminal and each representation $\pi_s$ is a Hilbert direct sum of irreducible representations of $C^*(G_0)$, we can define the following C*-algebra morphism
$$\pi_{1,0}: \quad C^*(G_0) \to \prod_{s \in S_{1,0}} \mathfrak{K}(\mathcal{H}_s), \quad
       x \to (\pi_s(x))_{s \in S_{1,0}}.$$

\begin{proposition} $\pi_{q,0}$ induces an isomorphism
    $C^*(G_0) \to \mathfrak{K}_\mathscr{C}(\mathcal{H})_{|S_{1,0}, J}$.
\end{proposition}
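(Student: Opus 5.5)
The plan is to mirror the proof of Proposition \ref{piq0}, with $K\ltimes C_0((\mathfrak{g}/\mathfrak{k})\,\hat{}\,)$ in place of $K\ltimes C(K\backslash U)$ and the points $p_{\Lambda(s)}$ in place of $p^\star_{\Lambda(s)}$. We identify $C^*(G_0)$ with $K\ltimes C_0((\mathfrak{g}/\mathfrak{k})\,\hat{}\,)$ and use the dense span of the elements $fe_n$, where $f\in C_0((\mathfrak{g}/\mathfrak{k})\,\hat{}\,)$, $n\in\mathbb{Z}$, and $e_n$ is the measure on $K$ dual to $\zeta_n$.

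The first step is to show that each $\pi_{1,0}(fe_n)$ is a continuous vector field of $\mathfrak{K}_\mathscr{C}(\mathcal{H})_{|S_{1,0}}$ vanishing at infinity.
\begin{itemize}
\item For every $s$, the operator $\pi_s(fe_n)$ is the multiplication by $k\mapsto f(p_{\Lambda(s)}k)$ composed with the projection onto $\mathbb{C}\zeta_n$, or $0$ if $n\notin\mathbb{Z}(s)$. It therefore has rank at most one.
\item When $\Lambda(s)=0$, which holds for all discrete parameters and for the continuous parameters with $\lambda=0$, the point $p_0$ is $K$-fixed. So $f$ acts by the scalar $f(p_0)$, and $\pi_s(fe_n)$ is a multiple of the projection onto $\mathbb{C}\zeta_n$. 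This places it in the constrained fiber $\mathfrak{K}_\mathscr{C}(\mathcal{H})_s$.
\item Continuity only needs checking on the two components of continuous parameters, where $\mathcal{H}$ is trivial. There it follows from the continuity of $s\mapsto p_{\Lambda(s)}=\lambda/2$ on $i\mathbb{R}_+$ and the uniform continuity of $f$ on $K$-orbits.
\end{itemize}

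Vanishing at infinity is the new point compared with the compact case $K\backslash U$, and I expect it to be the main place where care is needed. Along the continuous parameters, $|\Lambda(s)|\to\infty$ means $p_{\Lambda(s)}K$ leaves every compact set, so $\Vert\pi_s(fe_n)\Vert\leq \sup_{k\in K}|f(p_{\Lambda(s)}k)|\to 0$. Moreover, $\pi_s(fe_n)=0$ unless $n\in\mathbb{Z}(s)$, and only finitely many discrete components have $n$ among their $K$-types. This handles the discrete components and finishes this step. By density, $\pi_{1,0}(C^*(G_0))\subset\mathfrak{K}_\mathscr{C}(\mathcal{H})_{|S_{1,0}}$.

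Next, $J$-equivariance is immediate. For a continuous parameter $s$ and a discrete parameter $s'$ with $\Lambda(s)=\Lambda(s')=0$ and equal parity, $\pi_s=\mathrm{Ind}(p_0,M,e(s))$ decomposes as $\bigoplus_{m\in\mathbb{Z}^{e(s)}}\mathrm{Ind}(p_0,K,m)$ by Remark \ref{InductionStages}. The operator $J(s',s)=J_{n,\pm}$ is just the projection onto the summands indexed by $\mathbb{Z}(s')$, so it intertwines $\pi_s$ and $\pi_{s'}$.

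The last step applies the generalized Stone–Weierstrass theorem (Theorem \ref{StoneW}) to $\pi_{1,0}:C^*(G_0)\to\mathfrak{K}_\mathscr{C}(\mathcal{H})_{|S_{1,0},J}$. By the isomorphism (\ref{Jequ}), the target is liminal. Lemma \ref{irrepsK}, applied over $S^c_{1,0}$, gives its spectrum:
\begin{itemize}
\item one point for each continuous parameter with $\lambda\neq0$ and each parity;
\item one point $\mathbb{C}\zeta_m$, $m\in\mathbb{Z}$, coming from the two $\lambda=0$ continuous parameters.
\end{itemize}
Pulling these back gives exactly the representations $\mathrm{Ind}(p_\lambda,M,\varepsilon)$ for $\lambda\in i\mathbb{R}_+^*$ and $\mathrm{Ind}(p_0,K,m)$. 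These are irreducible and, by the classification of the unitary dual of $G_0$ in Section 2.2, pairwise inequivalent and exhaustive. Hence both conditions of Theorem \ref{StoneW} hold, and $\pi_{1,0}$ is an isomorphism onto $\mathfrak{K}_\mathscr{C}(\mathcal{H})_{|S_{1,0},J}$. Injectivity is included in this conclusion.
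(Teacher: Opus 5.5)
Your proposal is correct and follows the route the paper intends: the paper omits this proof as a repetition of that of Proposition \ref{piq0}, and you carry that argument over (density of the $fe_n$, scalar action at the $K$-fixed point $p_0$, $J$-equivariance, then Theorem \ref{StoneW} via the isomorphism (\ref{Jequ}) and Lemma \ref{irrepsK}). You also correctly supply the one genuinely new ingredient, the decay $\sup_{k\in K}|f(p_{\Lambda(s)}k)|\to 0$ along the non-compact half-lines of continuous parameters, which is needed here because $(\mathfrak{g}/\mathfrak{k})\,\hat{}$, unlike $K\backslash U$, is not compact.
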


We omit the proof since it is very similar to that of Proposition \ref{piq0}.

\vspace{6pt}
\fbox{Conclusion.} For each pair $(q,t)\in \mathbb{R}_+^*\times \mathbb{R}$, we defined a C*-algebra embedding
$$\pi_{q,t} : C^*_r(\mathbf{G})_{q,t} \hookrightarrow \mathfrak{K}_\mathscr{C}(\mathcal{H})_{|S_{q,t}, J}.$$
The final step of the construction consists is the following theorem, which follows from Lemma \ref{subfieldJ}.
\begin{theorem} \label{construction}
    There exists a unique continuous field of C*-algebras $C^*_r(\mathbf{G})$ over $\mathbb{R}_+^* \times \mathbb{R}$ whose fibers are given by (\ref{fibers}) and such that $(\pi_{q,t})_{q,t} : C^*_r(\mathbf{G}) \to \mathcal{L}_\ast\mathfrak{K}_\mathscr{C}(\mathcal{H})$ defines a morphism of continuous fields of C*-algebras over $\mathbb{R}_+^* \times \mathbb{R}$.
\end{theorem}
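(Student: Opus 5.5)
The plan is to transport the continuous subfield of Lemma \ref{subfieldJ} along the embeddings $\pi_{q,t}$. Let $\mathcal{J}$ be the continuous subfield of $\mathcal{L}_\ast\mathfrak{K}_\mathscr{C}(\mathcal{H})$ given by Lemma \ref{subfieldJ}, whose fiber over $(q,t)$ is $\mathfrak{K}_\mathscr{C}(\mathcal{H})_{|S_{q,t},J}$. The first step is to check that each $\pi_{q,t}$ is an isomorphism of $C^*_r(\mathbf{G})_{q,t}$ onto $\mathcal{J}_{q,t}$.
\begin{itemize}
\item For $t\neq 0$ there are no $J$-relations to impose, since the operators $J(s',s)$ are only defined for parameters with $\tau=0$. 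Hence $\mathcal{J}_{q,t}=\mathfrak{K}_\mathscr{C}(\mathcal{H})_{|S_{q,t}}$. The claim then follows from Theorem \ref{piq1} together with the pullback by $\varphi_{q,t}$ when $q\neq 1$, and from the proposition relying on \cite{ClareHigsonCrisp} when $q=1$.
\item For $t=0$ it is exactly Proposition \ref{piq0} when $q\neq1$, and its analogue for $C^*(G_0)$ when $q=1$.
\end{itemize}

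The second step is to define the field. We declare a vector field $x=(x_{q,t})$ with $x_{q,t}\in C^*_r(\mathbf{G})_{q,t}$ to be continuous if and only if $(\pi_{q,t}(x_{q,t}))_{q,t}$ is a continuous vector field of $\mathcal{J}$. Since each $\pi_{q,t}$ is an isometric $\ast$-isomorphism onto $\mathcal{J}_{q,t}$, the axioms of a continuous field of C*-algebras transfer verbatim from $\mathcal{J}$:
\begin{itemize}
\item continuity of norms;
\item stability under products and involution;
\item totality of the fibers;
\item local uniform closure.
\end{itemize}
In other words, $C^*_r(\mathbf{G})$ is defined as the pullback of $\mathcal{J}$ under the fiberwise isomorphisms. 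By construction, $(\pi_{q,t})$ maps continuous fields into $\Gamma(\mathcal{J})\subset\Gamma(\mathcal{L}_\ast\mathfrak{K}_\mathscr{C}(\mathcal{H}))$. Since $\mathcal{J}$ is a continuous subfield of C*-algebras, $(\pi_{q,t})$ is therefore a morphism (indeed an embedding) of continuous fields of C*-algebras. This gives existence.

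The third step is uniqueness. Suppose $B$ is another continuous field with the same fibers such that $(\pi_{q,t})$ is a morphism into $\mathcal{L}_\ast\mathfrak{K}_\mathscr{C}(\mathcal{H})$. Then $(\pi_{q,t})$ maps $\Gamma(B)$ into continuous vector fields of $\mathcal{L}_\ast\mathfrak{K}_\mathscr{C}(\mathcal{H})$ taking values in the fibers $\mathcal{J}_{q,t}$. By the first bullet fact of Section 1.1, these are exactly the elements of $\Gamma(\mathcal{J})$. Hence $\pi$ is an embedding of continuous fields $B\to\mathcal{J}$ that is fiberwise surjective, i.e. an isomorphism. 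By \cite{Dixmier}*{§10.2.4}, $\Gamma(B)$ is then mapped onto $\Gamma(\mathcal{J})$, and so $\Gamma(B)$ coincides with the continuous fields defined in the second step.

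The only genuinely delicate point is the first step at $t\neq 0$: one must check that the $J$-equivariance condition is vacuous there. This holds because the pairs $(s,s')$ are required to lie in $S_{q,0}$. The remaining work is bookkeeping with Lemma \ref{subfieldJ}, which already carries the analytic content near $t=0$.
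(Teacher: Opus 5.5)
Your proposal is correct and follows the paper's route. The paper derives the theorem directly from Lemma \ref{subfieldJ}: it transports the subfield with fibers $\mathfrak{K}_\mathscr{C}(\mathcal{H})_{|S_{q,t},J}$ back along the fiberwise isomorphisms $\pi_{q,t}$ supplied by Theorem \ref{piq1}, Proposition \ref{piq0} and their $q=1$ counterparts, and you simply make explicit the details the paper leaves implicit (that $J$-equivariance is vacuous for $t\neq 0$, that the field axioms transfer, and uniqueness via \cite{Dixmier}*{\S 10.2.4}).
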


\subsection{Geometric characterizations of the continuous field at $t = 0$ and $q= 1$}

Our construction of $C^*_r(\mathbf{G})$ explicitly involves the representation theory of its fibers. We mainly chose this point of view because it will make the Mackey analogy easier to derive in the setting of $q$-deformations. We now show how to characterize pieces of this continuous field, without any reference to specific representations, using deformations to the normal cone.

If $M$ is any smooth manifold and $V\subset M$ is an embedded submanifold, let us denote by $\mathrm{D}_M(V)$ the associated deformation to the normal cone \cite{DebordSkandalis}*{§1.1}. We recall that the latter is naturally equipped with a canonical submersion $\mathrm{p}_M^V : \mathrm{D}_M(V) \to \mathbb{R}$ whose fiber over every $h \in \mathbb{R}^*$ is $M$, while the fiber over $0$ is the normal bundle $\mathrm{N}_M(V)$ of the inclusion $V \subset M$. As explained in \cite{DebordSkandalis2}, if $V \subset M$ is a Lie groupoid inclusion, then both $\mathrm{N}_M(V)$ and $\mathrm{D}_M(V)$ inherit a natural Lie groupoid structure such that each of the fibers of $\mathrm{p}_M^V$ is a Lie subgroupoid of $\mathrm{D}_M(V)$. Moreover, for every $h \in \mathbb{R}$, the restriction to the fiber $\mathrm{D}_M(V)_h = (\mathrm{p}_M^V)^{-1}(h)$ induces a surjective C*-algebra morphism $$\mathrm{Res}_h  : C^*_r(\mathrm{D}_M(V)) \to C^*_r(\mathrm{D}_M(V)_h).$$
If $\mathrm{N}_M(V)$ is amenable, there is a unique continuous field of C*-algebras over $\mathbb{R}$ whose fiber at each $h \in \mathbb{R}$ is $C^*_r(\mathrm{D}_M(V)_h)$ and whose C*-algebra of continuous vector fields vanishing at infinity is $C^*_r(\mathrm{D}_M(V))$, the evaluation at any $h \in \mathbb{R}$ identifying with $\mathrm{Res}_h$. This is true for the same reasons as \cite{MAhigson}*{Lemma 6.13}.

Let us consider the restriction of $C^*_r(\mathbf{G})$ to $\cpa^{-1}(1)$. An application of \cite{MEmbeddingReal} in the particular case of $\mathrm{SL}(2,\mathbb{R})$ shows that $C^*_r(\mathbf{G})_{|\cpa^{-1}(1)}$ coincides with the classical Mackey deformation field \cite{MAhigson}*{§6.2}. This means that the continuous field of C*-algebras $\tau_*\left(C^*_r(\mathbf{G})_{|\cpa^{-1}(1)}\right)$ coincides with $C^*_r(\mathrm{D}_G(K))$.

A similar statement holds if we instead restrict the field to $\tau^{-1}(0)$. Let us identify $K$ with the group of elements of $G_0^\star$ having both their source and range equal to the base point $b$ of $K\backslash U$. The Lie groupoid $\mathrm{D}_{G_0^\star}(K)$ coincides with the transformation groupoid relative to the fiberwise action of $K$ on $\mathrm{D}_{K\backslash U}(b)$. It is thus amenable and its C*-algebra identifies canonically with the crossed product $C_0(\mathrm{D}_{K\backslash U}(b)) \rtimes K$. Each fiber $\mathrm{D}_{G_0^\star}(K)_h$ is the transformation groupoid relative to the action of $K$ on $\mathrm{D}_{K\backslash U}(b)_h$, hence is amenable. Finally, the restriction morphisms
$$\mathrm{Res}_h : C_0(\mathrm{D}_{K\backslash U}(b)) \rtimes K \longrightarrow C^*(\mathrm{D}_{G_0^\star}(K)_h) = C_0(\mathrm{D}_{K\backslash U}(b)_h) \rtimes K $$
are induced from the restriction maps $C_0(\mathrm{D}_{K\backslash U}(b)) \to C_0(\mathrm{D}_{K\backslash U}(b)_h)$ by crossed product functoriality.

Recall Convention \ref{normalization}, by which the Pontryagin dual $(\mathfrak{g}/\mathfrak{k})\,\hat{}$ of $\mathfrak{g}/\mathfrak{k}$ is identified with $T_b(K\backslash U)$. By this means, the group C*-algebra of $G_0$, which is canonically isomorphic to $C_0((\mathfrak{g}/\mathfrak{k})\,\hat{}\,) \rtimes K$, is from now identified with $$C_0(T_b(K\backslash U)) \rtimes K = C^*(\mathrm{D}_{G_0^\star}(K)_0) = C^*(\mathrm{N}_{G_0^\star}(K)).$$

\begin{proposition}
    Let $\eta : q \in \mathbb{R}_+^* \mapsto 2 \log q \in \mathbb{R}$. The continuous field of C*-algebras over $\mathbb{R}$ defined by $C_r^*(\mathrm{D}_{G_0^\star}(K))$ coincides with $(\eta\circ\cpa)_*\left(C^*_r(\mathbf{G})_{|\tau^{-1}(0)}\right)$.
\end{proposition}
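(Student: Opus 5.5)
The strategy is to compare both continuous fields through a common dense subalgebra and then use uniqueness of continuous fields generated by a total family of continuous sections. Both sides live over $\mathbb{R}$, with the same fibers at every $h$. At $h\neq 0$, $q=e^{h/2}\neq 1$ and the fiber of $(\eta\circ\cpa)_*(C^*_r(\mathbf{G})_{|\tau^{-1}(0)})$ is $C^*_r(\mathbf{G})_{q,0}=C^*(G_0^\star)$, which equals $C^*(\mathrm{D}_{G_0^\star}(K)_h)$. At $h=0$ the fiber is $C^*(G_0)=C^*(\mathrm{N}_{G_0^\star}(K))$ via Convention \ref{normalization}. (Here $\cpa$ restricted to $\tau^{-1}(0)$ is a homeomorphism onto $\mathbb{R}_+^*$, so the pushforward is just a reparametrization.) By the uniqueness in \cite{Dixmier}*{§10.2.4}, it therefore suffices to find a family of sections of the field $C^*_r(\mathrm{D}_{G_0^\star}(K))$ with the following properties: it is dense in each fiber, and its image under the fiberwise identifications is continuous for the field $C^*_r(\mathbf{G})_{|\tau^{-1}(0)}$.

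For the sections I would take elements $fe_n$, where $n\in\mathbb{Z}$ and $f\in C_c(\mathrm{D}_{K\backslash U}(b))$. These span a dense subspace of $C_0(\mathrm{D}_{K\backslash U}(b))\rtimes K$, and their restrictions $f_h e_n$ are dense in each fiber. Continuity for $C^*_r(\mathbf{G})$ is tested via Theorem \ref{construction}. One has to show that $h\mapsto \pi_{e^{h/2},0}(f_he_n)$ is a continuous vector field of $\mathcal{L}_*\mathfrak{K}_\mathscr{C}(\mathcal{H})$, where $\pi_{1,0}$ is used at $h=0$. The natural route is to exhibit a continuous section of $\mathfrak{K}_\mathscr{C}(\mathcal{H})_{|\tau^{-1}(0)}$ over $S_{\tau^{-1}(0)}$ whose restriction to each $S_{q,0}$ is $\pi_{q,0}(f_he_n)$.

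As in the proof of Proposition \ref{piq0}, the operator $\pi_s(f_he_n)$ is the composite of multiplication by $k\mapsto f_h(x(s)\cdot k)$ with the projection onto $\mathbb{C}\zeta_n$. Here $x(s)\in \mathrm{D}_{K\backslash U}(b)_h$ is the point attached to $s$: for $q\neq 1$ it is $p^\star_{\Lambda(s)}$, and for $q=1$ it is the point $p_{\Lambda(s)}$ of $T_b(K\backslash U)$. On discrete parameters one takes the $K$-fixed point. The key step is then to check that the map $s\mapsto (\eta(\cpa(s)),x(s))\in \mathrm{D}_{K\backslash U}(b)$ is continuous on $S_{\tau^{-1}(0)}$. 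Granting this, continuity of $f$ on $\mathrm{D}_{K\backslash U}(b)$, together with the local triviality of $\mathcal{H}$ (the $\zeta_n$ being continuous), gives continuity of the rank-one-type operators in norm. Compact support of $f$ gives vanishing at infinity.

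I expect the main obstacle to be this continuity check near $q=1$. The topology of $\mathrm{Pri}_\varepsilon$ there is described through $(q,t,\lambda)\mapsto(q,t,q^\lambda)$, so one must verify that the normalization of Convention \ref{normalization} matches it. Concretely, the point $p^\star_{q^{\lambda}}$, with coordinates $X=q^\lambda$ and $Z=0$, should converge in the deformation to the normal cone, after rescaling by $h=2\log q$, to the tangent vector $p_\lambda$ (namely $\lambda/2$ in the $\delta X+\delta Z$ coordinate). I would check this by expanding $h^{-1}(X-1)=h^{-1}(q^{\lambda}-1)\to\lambda/2$ and $h^{-1}Z=0$; these are exactly the evaluations of $x_\mathbb{A}$ and $z_\mathbb{A}$ from the specialization table. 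One should also confirm that the discrete parameters go to the fixed points $p^\star_1$ and $p_0$ consistently. Once this is done, both fields contain a common total family of continuous sections with matching fibers, and they coincide.
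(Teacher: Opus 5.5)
Your proposal is correct and is essentially the paper's argument. You reduce via \cite{Dixmier}*{Proposition 10.2.4} to the total family $fe_n$, write $\pi_s(f_he_n)$ as multiplication by $k\mapsto f(x(s)k)$ composed with the projection onto $\mathbb{C}\zeta_n$, and reduce everything to continuity of $s\mapsto x(s)$ into $\mathrm{D}_{K\backslash U}(b)$, which your expansion $h^{-1}(q^{\lambda}-1)\to\lambda/2$ settles near $q=1$. The only difference is that the paper uses the isomorphism (\ref{Jequ}) to test continuity on continuous parameters alone. You instead also treat the discrete components directly: they map to the fixed points $p^\star_{\pm 1}$ or $p_0$, and for $\sigma=-1$ the compact support of $f$ forces decay as $q\to 1$. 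This works equally well.
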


\begin{proof}

    In view of \cite{Dixmier}*{Proposition 10.2.4}, it is enough to check that for any $x$ in $C_0(\mathrm{D}_{K\backslash U}(b)) \rtimes K$, the vector field $\tilde{x} = (\mathrm{Res}_{\eta(q)}(x))_{q >0}$ is an element of $\cpa_*\left(C^*_r(\mathbf{G})_{|\tau^{-1}(0)}\right)$. Recall from the proof of Proposition \ref{piq0} the definition of the family $(e_n)_{n \in \mathbb{Z}}$ of measures on $K$. Let us fix $f \in C_0(\mathrm{D}_{K\backslash U}(b))$, $n \in \mathbb{Z}$, and let us write $x = fe_n \in C_0(\mathrm{D}_{K\backslash U}(b)) \rtimes K$. By construction (Theorem \ref{construction}), the vector field $\tilde x$ is continuous and vanishes at infinity if and only if $x' = (\pi_s(\tilde{x}_{\cpa(s)}))_{s \in \tau^{-1}(0)}$ belongs to $\mathfrak{K}_\mathscr{C}(\mathcal{H})_{|\tau^{-1}(0),J}$, or equivalently, because of  the isomorphism (\ref{Jequ}), if and only if $x'' = (\tilde{x}_{\cpa(s)})_{s \in \tau^{-1}(0)^c}$ belongs to $\mathfrak{K}_\mathscr{C}(\mathcal{H})_{|\tau^{-1}(0)^c}$.

    For any $s \in \tau^{-1}(0) \subset S$, let us write
    $$\mathbf{p}_s = \begin{cases}
        p^\star_{\Lambda(s)} \in \mathrm{D}_{K\backslash U}(b)_{\eta(q)} = K\backslash U &\text{if $\cpa(s) = q \neq 1$}\\
        p_{\Lambda(s)} \in \mathrm{D}_{K\backslash U}(b)_{0} = (\mathfrak{g}/\mathfrak{k})\,\hat{} &\text{if $\cpa(s) = 1$.}
    \end{cases}$$
    For every $s \in \tau^{-1}(0)^c$, the operator $x''_s$ is the composition of the multiplication by the continuous function $k \mapsto f(\mathbf{p}_sk)$ with the operator $E^n_n(s)$ defined by (\ref{ntom}). Now, from the material of Subsections 2.2 and 2.4, one can check that, under the identifications we made, $s \in \tau^{-1}(0)^c \mapsto \mathbf{p}_s \in \mathrm{D}_{K\backslash U}(b)$ is continuous. This show that $x''$ is an element of $\mathfrak{K}_\mathscr{C}(\mathcal{H})_{|\tau^{-1}(0)^c}$, hence that $\tilde{x}$ belongs to $\cpa_*\left(C^*_r(\mathbf{G})_{|\tau^{-1}(0)}\right)$. The fact that $C_0(\mathrm{D}_{K\backslash U}(b)) \rtimes K$ is densely spanned by elements of the same form as $x$ concludes the proof.
\end{proof}

\section{The Mackey analogy for $q$-deformed $\mathrm{SL}(2,\mathbb{R})$}

In this last section, we give a collection of results concerning the $q$-deformations of $\mathrm{SL}(2,\mathbb{R})$ which are natural analogues of the various statements covered by the Mackey analogy \cites{MEmbeddingReal, AfgoustidisAubert, Afgoustidis3, Afgoustidis1}. We show that for fixed $q\neq 1$, the restriction of the continuous field of C*-algebras $C^*_r(\mathbf{G})$ to $\{q\}\times[0,1]$ can be characterized as the mapping cone of a certain embedding $C^*(G_0^\star) \hookrightarrow C^*_{q,r}(G)$. Then we prove that the latter induces a continuous bijection between the spectra of both algebras and an isomorphism in K-theory.

For the rest of the paper, we fix $q\neq 1$ and we write $S_q = \cpa^{-1}(q)$. We also denote by $C^*_{q,r}(\mathbf{G})$ the continuous field of C*-algebras over $\mathbb{R}$ obtained from $C^*_r(\mathbf{G})_{|\{q\}\times\mathbb{R}}$ by pushforward by $\tau$.

\subsection{The Mackey embedding and bijection}
 We start by the construction of the embedding.
 For any $t\in \mathbb{R}$, let us denote by $\gamma_t$ the unique map $S_{q,1} \to S_{q,t}$ which preserves $\Lambda$, $\varepsilon$ and the connected components of $S_q$. Note that the map $(t,s) \in \mathbb{R} \times S_{q,1} \mapsto \gamma_t(s) \in S_q$ is a homeomorphism. Moreover, for any $s \in S_{q}$, let $v_s : \mathcal{H}_s \to \mathcal{H}_{\gamma_{\tau(s)}^{-1}(s)}$ be the isometry defined by
 $$v_s(\zeta_{q^{\tau(s)},n}) = \frac{\Vert\zeta_{q^{\tau(s)},n}\Vert}{\Vert\zeta_{q,n}\Vert} \zeta_{q,n} \qquad (n \in \mathbb{Z}(s)).$$
 Then, we define an isomorphism of C*-algebras $\beta_t : \mathfrak{K}(\mathcal{H})_{|S_{q,t}} \to \mathfrak{K}(\mathcal{H})_{|S_{q,1}}$ for any $t \in \mathbb{R}$ as follows
 $$\beta_t(a)_s = v_{\gamma_t(s)} a_{\gamma_t(s)} v_{\gamma_t(s)}^* \qquad(a \in \mathfrak{K}(\mathcal{H})_{|S_{q,t}}, s \in S_{q,1}).$$
By construction, $\beta = (\beta_t)_{ t \in \mathbb{R}}$ defines a trivialization of the continuous field of C*-algebras $\tau_*(\mathfrak{K}(\mathcal{H})_{|S_{q}})$. For any $t \in \mathbb{R}$, let $\alpha_t : C^*_{q,r}(\mathbf{G})_t \hookrightarrow C^*_{q,r}(G)$ be the unique morphism of C*-algebras such that the following square commutes:
$$\begin{tikzcd}
C^*_{q,r}(\mathbf{G})_t \ar[r, "\pi_{q,t}"]\ar[d,"\alpha_t"]& \mathfrak{K}_\mathscr{C}(\mathcal{H})_{|S_{q,t}} \ar[d, "\beta_t"]\\
C^*_{q,r}(G) \ar[r,"\pi_{q,1}", "\sim"'] & \mathfrak{K}_\mathscr{C}(\mathcal{H})_{|S_{q,1}}
\end{tikzcd}.$$
A reformulation of the above properties with Theorem \ref{construction} leads to the following.

\begin{proposition}
    For every $t \neq 0$, the morphism $\alpha_t$ is bijective, while for $t=0$, it is only injective. Moreover, the collection $(\alpha_t)_{t \in \mathbb{R}}$ is an embedding of the continuous field of C*-algebras $C^*_{q,r}(\mathbf{G})$ into the constant field over $\mathbb{R}$ with fiber $C_{q,r}^*(G)$.
\end{proposition}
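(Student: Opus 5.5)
We unwind the commutative square defining $\alpha_t$, namely $\alpha_t = \pi_{q,1}^{-1}\circ\beta_t\circ\pi_{q,t}$. Each $\alpha_t$ is then a composition of three maps whose injectivity and surjectivity are already known. The continuity statement follows by transporting Theorem~\ref{construction} through the trivialization $\beta$.

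First, bijectivity for $t\neq 0$ and injectivity for $t=0$. By Theorem~\ref{piq1}, $\pi_{q,1}$ is an isomorphism onto $\mathfrak{K}_\mathscr{C}(\mathcal{H})_{|S_{q,1}}$. For $t\neq 0$, $\pi_{q,t}=\varphi_{q,t}^*\circ\pi_{q^t,1}$ is an isomorphism onto $\mathfrak{K}_\mathscr{C}(\mathcal{H})_{|S_{q,t}}$. For $t=0$, Proposition~\ref{piq0} shows that $\pi_{q,0}$ is injective with image $\mathfrak{K}_\mathscr{C}(\mathcal{H})_{|S_{q,0},J}$.

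Next we check that $\beta_t$ maps $\mathfrak{K}_\mathscr{C}(\mathcal{H})_{|S_{q,t}}$ isomorphically onto $\mathfrak{K}_\mathscr{C}(\mathcal{H})_{|S_{q,1}}$. The isometry $v_s$ is diagonal in the bases $\zeta_{\cdot,n}$, and $\gamma_t$ preserves $\Lambda$, the parity and the connected components. Hence $\gamma_t$ preserves whether $\Lambda(s)$ is real, and for $t\neq0$ it matches the constraint decompositions. Here $\mathcal{D}_{\cdot,0,\pm}$ corresponds to the span of $\zeta_n$ with $\pm n>0$, and $v_s$ carries it to the corresponding subspace. Thus $\beta_t$ restricts to an isomorphism between the constrained algebras.

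For $t=0$ the constraint on $S_{q,0}$ is finer: it splits into lines $\mathbb{C}\zeta_n$ when $\Lambda(s)\in\mathbb{R}$. So $\beta_0$ maps $\mathfrak{K}_\mathscr{C}(\mathcal{H})_{|S_{q,0}}$ injectively into $\mathfrak{K}_\mathscr{C}(\mathcal{H})_{|S_{q,1}}$, because an operator diagonal on the lines $\mathbb{C}\zeta_n$ preserves each $\mathcal{D}_{\cdot,0,\pm}$. This inclusion is what makes $\alpha_0$ well defined, and we need no more than injectivity at $t=0$. Composing gives the first assertion. For $t=0$, surjectivity fails since, for example, the image consists of operators diagonal at real $\Lambda$.

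Second, the field statement. Every $\alpha_t$ is isometric, being an injective $*$-morphism of C*-algebras, so it remains to show that continuous sections go to continuous sections. Let $x$ be a continuous vector field of $C^*_{q,r}(\mathbf{G})$. By Theorem~\ref{construction}, $(\pi_{q,t}(x_t))_t$ is a continuous vector field of $\tau_*(\mathfrak{K}_\mathscr{C}(\mathcal{H})_{|S_q})$, hence of $\tau_*(\mathfrak{K}(\mathcal{H})_{|S_q})$. Since $\beta$ is a trivialization of the latter field, $(\beta_t\pi_{q,t}(x_t))_t$ is a continuous map $\mathbb{R}\to\mathfrak{K}_\mathscr{C}(\mathcal{H})_{|S_{q,1}}$. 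Applying the fixed isomorphism $\pi_{q,1}^{-1}$ shows that $t\mapsto\alpha_t(x_t)$ is norm continuous into $C^*_{q,r}(G)$, as required.

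The main obstacle is justifying that $\beta$ really is a trivialization of $\tau_*(\mathfrak{K}(\mathcal{H})_{|S_q})$, which the paper states \emph{by construction}. To verify it we use two facts. First, $(t,s)\mapsto\gamma_t(s)$ is a homeomorphism $\mathbb{R}\times S_{q,1}\to S_q$. Second, $v_s$ sends the continuous fields $\mathbf{1}_{\mathbb{Z}(s)}(n)\zeta_{\cpa^\tau(s),n}$ to continuous multiples of a fixed vector, with the norm ratios continuous in $\tau(s)$. It follows that $v$ is a unitary isomorphism of Hilbert fields between $\mathcal{H}_{|S_q}$ and the pullback of $\mathcal{H}_{|S_{q,1}}$. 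The pushforward construction of Proposition~\ref{pushforward} then transports this to the elementary C*-fields.
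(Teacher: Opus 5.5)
Your proposal is correct and follows the paper's argument, which is only the one-line observation that $\alpha_t=\pi_{q,1}^{-1}\circ\beta_t\circ\pi_{q,t}$, combined with Theorem~\ref{construction} and the trivialization $\beta$; you also make explicit a check the paper leaves implicit, namely that $\beta_t$ carries $\mathfrak{K}_\mathscr{C}(\mathcal{H})_{|S_{q,t}}$ into $\mathfrak{K}_\mathscr{C}(\mathcal{H})_{|S_{q,1}}$, which is what makes $\alpha_0$ well defined. One harmless nit: on discrete parameters $\gamma_t$ does not literally preserve $\Lambda$ ($\sigma q^{n}$ versus $\sigma q^{nt}$), but $\Lambda$ is real on every discrete parameter, so your conclusion that $\gamma_t$ preserves whether $\Lambda$ is real still holds.
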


In particular, note that for any $q' \in \mathbb{R}_+^* \setminus\{1\}$, the C*-algebras $C_{q,r}^*(G)$ and $C_{q',r}^*(G)$ are isomorphic. Moreover, we have an analogue of the Mackey bijection.

\begin{theorem}
    There exists a unique bijection $\mu$ from the spectrum of $C_{q,r}^*(G)$ to the spectrum of $C^*(G_0^\star)$ such that for any irreducible representation $\omega$ of $C_{q,r}^*(G)$, the representation $\alpha_0^*\omega$ of $C^*(G_0^\star)$ obtained by pullback by $\alpha_0$ contains $\mu(\omega)$ as a subrepresentation. This map $\mu$ is continuous with respect to the Jacobson topologies, although its inverse is not.
\end{theorem}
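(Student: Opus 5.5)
Everything can be moved to the parametrizations by the index sets of Lemma \ref{irrepsK}. By Theorem \ref{piq1} and Lemma \ref{irrepsK}, the spectrum of $C^*_{q,r}(G)$ is identified with $J_1 = \bigsqcup_{s\in S_{q,1}} J_s$, where $J_s$ indexes the blocks of the constraint $\mathscr{C}$ at $s$. By Proposition \ref{piq0} and the isomorphism (\ref{Jequ}), the spectrum of $C^*(G_0^\star)$ is identified with $J_0^c = \bigsqcup_{s\in S^c_{q,0}} J_s$, the blocks over continuous parameters at $t=0$. Corollary \ref{Jacobson} makes both identifications homeomorphisms for the Jacobson topology. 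Under the embeddings, $\alpha_0$ is conjugate to $\beta_0$ restricted to $\mathfrak{K}_\mathscr{C}(\mathcal{H})_{|S_{q,0},J}$. Hence for $j\in J_s$ with $s\in S_{q,1}$, the pullback $\alpha_0^*\pi_j$ is the representation of $\mathfrak{K}_\mathscr{C}(\mathcal{H})_{|S_{q,0},J}$ on $\mathcal{H}_j$ given by $a\mapsto v_{\gamma_0(s)} a_{\gamma_0(s)} v_{\gamma_0(s)}^*$ restricted to $\mathcal{H}_j$. Here $v_{\gamma_0(s)}$ sends $\mathcal{H}_{\gamma_0(s)}$ onto $\mathcal{H}_s$ and is diagonal in the $\zeta_n$.

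I will compute $\alpha_0^*\pi_j$ case by case and define $\mu$.

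\textbf{Generic continuous $s$} ($\Lambda(s)\notin\{\pm1\}$). The block at $s$ is all of $\mathcal{H}_s$, and $\gamma_0(s)$ carries the trivial decomposition. So $\alpha_0^*\pi_s\cong \mathrm{Ind}(p^\star_{\Lambda(s)},M,e(s))$, which is irreducible, and I set $\mu(\pi_s)$ equal to it.

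\textbf{Continuous $s$ with $\Lambda(s)=\sigma\in\{\pm1\}$.} The constraint at $\gamma_0(s)$ splits into the lines $\mathbb{C}\zeta_n$. Two sub-cases arise.
\begin{itemize}
\item If $s$ is even, $\mathcal{H}_s$ is one block. The pullback is $\bigoplus_{n\in\mathbb{Z}^{\mathrm{even}}}\mathrm{Ind}(p^\star_\sigma,K,n)$.
\item If $s$ is odd, the blocks are $\mathcal{D}_{q,0,\pm}$. Each pulls back to $\bigoplus_{n\in\pm\mathbb{Z}^{\mathrm{odd}}_{>0}}\mathrm{Ind}(p^\star_\sigma,K,n)$.
\end{itemize}

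\textbf{Discrete $s$ of order $(n,\pm)$.} Take $\Lambda=\sigma q^{n}$ at $t=1$. Since $\gamma_0$ preserves connected components and $\Lambda(\gamma_0(s))=\sigma$, we have $\gamma_0(s)\in\mathrm{Dis}^\pm_{\sigma,n}$. By $J$-equivariance it corresponds to the continuous parameter at $p^\star_\sigma$, and the pullback is $\bigoplus_{m\in\pm\mathbb{Z}^{e}_{>n}}\mathrm{Ind}(p^\star_\sigma,K,m)$.

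In each degenerate case the pullback is a multiplicity-free sum of one-dimensional characters. Uniqueness of $\mu$ forces a choice, so I take the \emph{minimal $K$-type}, the summand $\zeta_m$ with $|m|$ minimal. This gives:
\begin{itemize}
\item $m=0$ for even continuous $s$;
\item $m=\pm1$ for the two pieces $\mathcal{D}_{q,0,\pm}$;
\item $m=\pm(n+1)$ for discrete series of order $(n,\pm)$.
\end{itemize}
The statement's phrase "contains $\mu(\omega)$" then has to be read as the paper intends it: the summand sharing the minimal $K$-type, by analogy with the classical Mackey bijection.

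\textbf{Bijectivity.} For fixed $\sigma$, the minimal $K$-types run through $\{0\}\cup\{\pm1\}\cup\{\pm(n+1):n\ge1\}$. With the parity constraint $e=(-1)^{n+1}$, these exhaust $\mathbb{Z}$ exactly once. Both parities are present, since $D_q^\pm(\sigma,n)$ occurs for every $n\ge1$ and both $\sigma$. Together with the generic part, where $\lambda\in\mathbb{U}_+\setminus\{\pm1\}$ and $\varepsilon=\pm1$ match Proposition \ref{spectrumGstar}, this gives a bijection onto the spectrum of $C^*(G_0^\star)$.

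\textbf{Continuity and failure of the inverse.} I will use the neighborhood bases $J(u,V)_{x,j}$ built from local fixations. The map $\mu$ is induced by $s\mapsto\gamma_0(s)$ followed by the choice of a sub-block of the block $j$. Under the trivializations $v$, this sub-block is contained in $u_x(\mathcal{H}_j)$. The key point is that $\gamma_0$ is a homeomorphism on continuous parameters, and that the defining inclusion $u_x(\mathcal{H}_j)\subset u_y(\mathcal{H}_{j'})$ is preserved when $\mathcal{H}_j$ is replaced by the smaller line $\mathbb{C}\zeta_m$. Hence $\mu$ maps basic neighborhoods into basic neighborhoods, so it is continuous. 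I expect the main obstacle to be this verification: I must check that the local fixations at $t=1$ and $t=0$ are compatible through $v$, in particular near the points $\Lambda=\pm1$, where the constraint changes shape.

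For the inverse, consider the character $\mathrm{Ind}(p^\star_1,K,2)$. It lies in the closure of the generic series $\mathrm{Ind}(p^\star_\lambda,M,1)$ as $\lambda\to1$, since $\mathbb{C}\zeta_2$ sits inside the full block. Its $\mu$-preimage is a discrete series $D^\pm_q(1,1)$, which is isolated in the spectrum of $C^*_{q,r}(G)$ because $\mathrm{Dis}$ components are single points over $S_{q,1}$. So $\mu^{-1}$ is not continuous.
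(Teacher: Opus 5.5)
\textbf{The gap is uniqueness.} Your computation of the pullbacks $\alpha_0^*\omega$ is fine, and so is the resulting map; it coincides with the paper's $\mu$. The bijectivity count is also fine. Your continuity argument works: at the only delicate points $\Lambda=\pm1$, basic neighbourhoods on both sides are the point together with nearby generic principal series of the same parity. Your counterexample for $\mu^{-1}$ is correct as well.

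In the degenerate cases, however, you treat $\mu$ as requiring a \emph{choice}. You impose the minimal-$K$-type rule and declare that the statement ``has to be read'' that way. This proves that a bijection satisfying the literal containment condition exists. It does not prove that it is the only one, and that claim is part of the theorem. No reinterpretation is needed: bijectivity plus the literal containment condition already force $\mu$. This is the paper's ``easy inductive argument'', which you are missing.

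\textbf{The missing induction.} First, for generic $\omega=\mathrm{ind}_q(\varepsilon,\lambda)$ with $\lambda\neq\pm1$, the pullback is the irreducible $\mathrm{Ind}(p^\star_\lambda,M,\varepsilon)$, so $\mu(\omega)$ is forced. Consequently, by surjectivity, each character $\mathrm{Ind}(p^\star_\sigma,K,m)$ is $\mu(\omega)$ for some degenerate $\omega$ whose pullback contains it. From your own pullback computations, the only such $\omega$ are:
\begin{itemize}
\item $\mathrm{ind}_q(1,\sigma)$, when $m$ is even;
\item $D_q^{+}(\sigma,n)$ with $m>n\geq0$ and $m-n$ odd;
\item $D_q^{-}(\sigma,n)$ with $-m>n\geq0$ and $m-n$ odd.
\end{itemize}
Now argue by induction on $|m|$:
\begin{itemize}
\item For $m=0$ the only candidate is $\mathrm{ind}_q(1,\sigma)$.
\item For $m=\pm1$ the only candidate is $D_q^{\pm}(\sigma,0)$.
\item For $|m|\geq2$, every candidate other than $D_q^{\pm}(\sigma,|m|-1)$ is $\mathrm{ind}_q(1,\sigma)$ or some $D_q^{\pm}(\sigma,n)$ with $n<|m|-1$. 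By the induction hypothesis each of these is already sent to a character $\mathrm{Ind}(p^\star_\sigma,K,m')$ with $|m'|<|m|$. Since $\mu$ is a function, none of them can be the preimage, which therefore must be $D_q^{\pm}(\sigma,|m|-1)$.
\end{itemize}
Hence $\mu(\mathrm{ind}_q(1,\sigma))=\mathrm{Ind}(p^\star_\sigma,K,0)$ and $\mu(D_q^\pm(\sigma,n))=\mathrm{Ind}(p^\star_\sigma,K,\pm(n+1))$ for \emph{every} admissible bijection. So your map is the unique one. Its preservation of minimal $K$-types is a consequence, as in the paper's subsequent Remark, not an extra hypothesis you need to add to the statement.
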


\begin{proof}
    Assume that $\mu$ is a bijection $\mathrm{Spec}\,C_{q,r}^*(G) \to \mathrm{Spec}\,C^*(G_0^\star)$ between the spectra of both algebras. Let us identify $\mathrm{Spec}\,C_{q,r}^*(G)$ with the list of irreducible unitarizable $(\mathfrak{g},K)_q$-modules given in Proposition \ref{Irrepsqt}. For every $\varepsilon \in \{-1,1\}$ and $\lambda \in \mathbb{U}_+ \setminus \{\pm 1\}$, we have
    \begin{gather*}
        \alpha_0^*[\mathrm{ind}_q(\varepsilon, \lambda)] = \mathrm{Ind}(p^\star_\lambda, M, \varepsilon),
    \end{gather*}
    so that $\mu[\mathrm{ind}_q(\varepsilon, \lambda)] = \mathrm{Ind}(p^\star_\lambda, M, \varepsilon)$. On the other hand, for any $\sigma \in \{-1,1\}$ and $n \in \mathbb{Z}_{\geq  0}$, we have
    \begin{gather*}
        \alpha_0^*[\mathrm{ind}_q(1, \sigma)] = \bigoplus_{m \in \mathbb{Z}^\mathrm{even}} \mathrm{Ind}(p^\star_\sigma, K, m),\\
        \alpha_0^*[D^\pm(\sigma,n)] = \bigoplus_{\substack{m-n \in \mathbb{Z}^\mathrm{odd}\\ \pm m > n }}\mathrm{Ind}(p^\star_\sigma, K, m).
    \end{gather*}
    From this, an easy inductive argument using the bijectivity of $\mu$ implies shows that
    \begin{gather*}
        \mu [\mathrm{ind}_q(1, \sigma)] =\mathrm{Ind}(p^\star_\sigma, K, 0),\qquad
        \mu[D^\pm(\sigma,n)] = \mathrm{Ind}(p^\star_\sigma, K, \pm (n+1)).
    \end{gather*}
    Conversely, one can check that the map thus defined satisfies the conditions of the theorem. Next, the continuity of $\mu$ derives from the following explicit descriptions of the Jacobson topology for $\mathrm{Spec}\,C_{q,r}^*(G)$ and $\mathrm{Spec}\,C^*(G_0^\star)$.
    
    The topology of $\mathrm{Spec}\,C_{q,r}^*(G)$ is given by Corollary \ref{Jacobson} via the isomorphism~$\pi_{q,1}$: the analogues of the discrete series are isolated, the set of analogues of the even principal series form a connected component homeomorphic to $\mathbb{U}_+$, while the irreducible analogues of the odd principal series form an open half-circle, the representations $D^\pm(\sigma,0)$ glued to its ends depending on $\sigma$. Similarly, since $\mathrm{Res}_{S_{q,0}^c} \circ \pi_{q,0} : C^*(G_0^\star) \to \mathfrak{K}_\mathscr{C}(\mathcal{H})_{|S_{q,0}^c}$ is an isomorphism, see (\ref{Jequ}) and Proposition \ref{piq0}, we deduce from Corollary \ref{Jacobson} an explicit description of $\mathrm{Spec}\,C^*(G_0^\star)$.
\end{proof}

\begin{remark}
    One can consider $C^*(K)$ as the subalgebra of $C^*(G_0^\star)$ generated by the projections $e_n$, as $n$ ranges over $\mathbb{Z}$, and it can thus be embedded into $C_{q,r}^*(G)$ through $\alpha_0$. This way, representations of $C^*(G_0^\star)$ or $C_{q,r}^*(G)$ can be restricted to $C^*(K)$. Let us identify the unitary dual of $K = \mathrm{SO}(2)$ with $\mathbb{Z}$ via $n \mapsto \zeta_n$. For any representation $\omega$ of $C^*(G_0^\star)$ or $C_{q,r}^*(G)$ and any $n \in \mathbb{Z}$, let us call $n$ a $K$-type of $\omega$ if the restriction of the latter to a representation of $K$ admits $n$ as a subrepresentation. If such a $K$-type has minimal absolute value among all the $K$-types of $\omega$, we call it minimal. Then one easily check that the Mackey bijection defined above preserves the sets of minimal $K$-types, as in the classical case.
\end{remark}

\subsection{An analogue of the Connes-Kasparov isomorphism} Finally, we show that the C*-algebras $C_{q,r}^*(G)$ and $C^*(G_0^\star)$ share the same K-theory.

Since the field $C^*_{q,r}(\mathbf{G})_{|[0,1]}$ is trivial away from $0$, the evaluation at $\tau =0$ induces an isomorphism in K-theory
$$\mathrm{K}(\mathrm{ev}_{\tau = 0}) : \mathrm{K}_\ast(C^*_{q,r}(\mathbf{G})_{|[0,1]}) \longrightarrow \mathrm{K}_\ast(C^*(G_0^\star)).$$
The question then arises whether the map $\mathrm{K}(\mathrm{ev}_{\tau = 1})\circ\mathrm{K}(\mathrm{ev}_{\tau = 0})^{-1}$ is an isomorphism from $\mathrm{K}_\ast(C^*(G_0^\star))$ to $\mathrm{K}_\ast(C^*_{q,r}(G))$. Indeed, the same procedure at $q=1$ yields the Connes-Kasparov isomorphism $\mathrm{K}_\ast(C^*(G_0)) \to \mathrm{K}_\ast(C^*_r(G))$, see \cite{BaumConnesHigson}*{p.263}.

One can rephrase this conjecture in terms of the embedding $\alpha_0$. Consider the C*-algebra morphism $\sigma : C^*(G_0^\star) \to C^*_{q,r}(\mathbf{G})_{|[0,1]}$ defined by:
$$\sigma(x)_t = \begin{cases}
    \alpha_t^{-1}(\alpha_0(x)) &\text{if $t \neq 0$}\\
    x &\text{otherwise}\end{cases}, \qquad (x \in C^*(G_0^\star), t \in [0,1]).$$
It is a section of $\mathrm{ev}_{\tau = 0}$ and we have $\mathrm{ev}_{\tau = 1}\circ \sigma = \alpha_0$. It follows that the map $\mathrm{K}(\alpha_0)$ induced by $\alpha_0$ in K-theory is precisely equal to $\mathrm{K}(\mathrm{ev}_{\tau = 1})\circ\mathrm{K}(\mathrm{ev}_{\tau = 0})^{-1}$.

\begin{theorem}
    The map $\mathrm{K}(\alpha_0) : \mathrm{K}_\ast(C^*(G_0^\star)) \to \mathrm{K}_\ast(C^*_{q,r}(G))$ is an isomorphism of abelian groups.
\end{theorem}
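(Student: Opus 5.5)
We identify both C*-algebras with explicit algebras of compact-operator-valued vector fields and compare them. This avoids proving that the field $C^*_{q,r}(\mathbf{G})_{|[0,1]}$ has constant K-theory directly. Via $\pi_{q,1}$, $C^*_{q,r}(G)\cong\mathfrak{K}_\mathscr{C}(\mathcal{H})_{|S_{q,1}}$ (Theorem \ref{piq1}). Via Proposition \ref{piq0} and the isomorphism (\ref{Jequ}), $C^*(G_0^\star)\cong\mathfrak{K}_\mathscr{C}(\mathcal{H})_{|S_{q,0}^c}$. Under these identifications $\alpha_0$ becomes the map $\beta_0$, so it suffices to compute K-theory on both sides and show $\beta_0$ induces an isomorphism.

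We decompose along connected components of $S_{q,1}$ and split into ideals and quotients.

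\emph{Target side.} The discrete parameters $D^\pm_q(\sigma,n)$ with $n\geq 1$ are isolated points, so each contributes a direct summand $\mathfrak{K}$, with $\mathrm{K}_0=\mathbb{Z}$ and $\mathrm{K}_1=0$.

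The even principal series form a component $\cong\mathbb{U}_+$ (a closed arc) with a trivial constraint and trivial Hilbert field. It contributes $C(\mathbb{U}_+,\mathfrak{K})$, which is homotopy equivalent to $\mathfrak{K}$, so $\mathrm{K}_0=\mathbb{Z}$ and $\mathrm{K}_1=0$.

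The odd principal series give $\{a\in C(\mathbb{U}_+,\mathfrak{K}(L^2_q(K)^{-1})) : a_{\pm1}\in\mathfrak{K}(\mathcal{D}_{q,0,+})\oplus\mathfrak{K}(\mathcal{D}_{q,0,-})\}$. Use the extension whose ideal is $C_0((0,\pi),\mathfrak{K})$ and whose quotient is $\mathfrak{K}^{4}$. The boundary map $\mathbb{Z}^4\to\mathrm{K}_1(C_0(0,1))=\mathbb{Z}$ sends each generator to $\pm1$, so the kernel is $\mathbb{Z}^3$. This gives $\mathrm{K}_0=\mathbb{Z}^3$ and $\mathrm{K}_1=0$.

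\emph{Source side.} On $S_{q,0}^c$, the even component is the arc with the fibers at $\Lambda=\pm1$ split into all the lines $\mathbb{C}\zeta_n$. The same exact sequence applies, now with quotient $\bigoplus_{n\in\mathbb{Z}^{\mathrm{even}}}\mathbb{C}^2$ (one copy of $\mathbb{C}$ for each line, at each of the two endpoints). The boundary map sends all generators at an endpoint to the same class. Hence $\mathrm{K}_0$ is the kernel of $\bigoplus_{n\ \mathrm{even}}\mathbb{Z}^2\to\mathbb{Z}$ and $\mathrm{K}_1=0$; the odd component is handled analogously. The resulting groups are free abelian with one generator per point of $\mathrm{Spec}\,C^*(G_0^\star)$ at the endpoints, modulo one relation per arc.

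\emph{Comparison.} The cleanest way to conclude is to apply the five lemma to the morphism of extensions induced by $\beta_0$, component by component. On the ideals, the interior of each arc, $\beta_0$ is a fiberwise unitary conjugation and hence an isomorphism. On the quotients at the endpoints $\Lambda=\pm1$, we compute the induced map
\[
\textstyle\bigoplus_n\mathbb{Z}[\mathbb{C}\zeta_n]\to\mathbb{Z}[\text{components of the target}].
\]
This uses exactly the branching formulas for $\alpha_0^*$ computed in the proof of the Mackey bijection theorem. At $\sigma$, for even parity, the map sends $\zeta_m\mapsto[D^+(\sigma,n)]$ if $m>n$, $\zeta_m\mapsto[D^-(\sigma,n)]$ if $m<-n$, and $\zeta_0\mapsto[\mathrm{ind}_q(1,\sigma)]$. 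To compare these quotients, however, the isolated discrete series points must be included in the target extension. We therefore take the quotient to consist of all representations with $\Lambda=\pm1$ (the endpoint ones together with the discrete series), and the ideal to be the union of open arcs on both sides.

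The main obstacle is that the quotient map is then a map between infinite direct sums that is not literally a bijection of generators. Its matrix is unitriangular with respect to the ordering by $|m|$, which corresponds to the inductive argument of the Mackey bijection $\mu$ sending $\mathrm{Ind}(p^\star_\sigma,K,\pm(n+1))\mapsto D^\pm(\sigma,n)$. Triangularity, together with the fact that each column is finite, gives invertibility on direct sums. Once this is checked, the five lemma applied to the six-term sequences yields that $\mathrm{K}(\beta_0)$, and hence $\mathrm{K}(\alpha_0)$, is an isomorphism.
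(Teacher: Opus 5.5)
Your proof is correct, but it takes a genuinely different route from the paper's.

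\textbf{The paper's route.} The paper uses Higson's minimal $K$-type argument. Both algebras are filtered by the ideals generated by the projections $e_n$, $e_{q,n}$ with $n$ in the minimal $K$-type sets $W_m$. In each subquotient $Q_m$, $\mathscr{Q}_m$, a projection with $n\in W_m$ acts as a rank-one projection in every irreducible representation. It is therefore full, so each subquotient is Morita equivalent to a corner, and $\alpha_{0,m}$ identifies the corners. Induction on $m$, the five lemma and continuity of K-theory then finish the proof.

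\textbf{Your route.} You work inside the explicit models given by Theorem \ref{piq1}, Proposition \ref{piq0} and (\ref{Jequ}), and use a single extension on each side. The ideal consists of fields supported on the open arcs; there $\alpha_0$ is conjugation by a fixed unitary, since the $v_s$ do not depend on $\Lambda$. The quotient is a $c_0$-sum of elementary algebras over the closed discrete set of arc endpoints and discrete-series points. On it, $\mathrm{K}_0(\alpha_0)$ is an explicit integer matrix, unitriangular for the order by $|m|$, with the Mackey bijection $\mu$ on the diagonal.

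\textbf{What each buys.} Your approach needs no full-corner or Morita lemma. It computes the K-groups of both algebras along the way, recovering the final Remark (with $\mathbb{Z}^3$ coming from the odd series, as you find). It also exhibits $\mathrm{K}(\alpha_0)$ as $\mu$ corrected by lower-order terms. The paper's approach only uses the spectra, their minimal $K$-types and a corner identification. That is why the same argument works for complex groups, real groups and the Monk--Voigt setting, where no such explicit model is available. Even so, its step $\alpha_{0,m}(e_nQ_me_n)=e_{q,n}\mathscr{Q}_me_{q,n}$ is itself verified through the construction of $\alpha_0$ via $\beta_0$.

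\textbf{Points to state more carefully.}

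\emph{The identification of $\alpha_0$.} Under your identifications, $\alpha_0$ is not $\beta_0$ on $\mathfrak{K}_\mathscr{C}(\mathcal{H})_{|S^c_{q,0}}$. It is $\beta_0$ composed with the inverse of (\ref{Jequ}), i.e.\ with the $J$-equivariant extension to the discrete parameters of $S_{q,0}$. This extension is exactly what sends endpoint data to the discrete-series points. That is why the component-by-component comparison you first announce cannot work, and why your global extension is needed.

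\emph{The quotient matrix.} The image of the class of the rank-one projection $P_{\zeta_m}$ at the endpoint $\sigma$ is the sum of the classes of all target blocks containing $\zeta_{q,m}$. For $m>0$ this is
\begin{gather*}
[P_{\zeta_m}]_\sigma\longmapsto[\mathrm{ind}_q(1,\sigma)]+\sum_{\substack{1\le n<m\\ n\ \mathrm{odd}}}[D^+_q(\sigma,n)]\quad(m\ \mathrm{even}),\\
[P_{\zeta_m}]_\sigma\longmapsto\sum_{\substack{0\le n<m\\ n\ \mathrm{even}}}[D^+_q(\sigma,n)]\quad(m\ \mathrm{odd}),
\end{gather*}
and symmetrically with $D^-_q$ for $m<0$. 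So every even $\zeta_m$, not only $\zeta_0$, hits $[\mathrm{ind}_q(1,\sigma)]$, and every odd $\zeta_m$ hits $[D^\pm_q(\sigma,0)]$. These extra terms lie below the diagonal, $m$ and $-m$ do not interact, and each generator has finitely many predecessors. Hence the matrix and its inverse are locally finite, and your conclusion stands.

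\emph{The target closed set.} It is not $\{\Lambda=\pm1\}$, since at $\tau=1$ the discrete parameters have $\Lambda=\sigma q^n$. Describe it instead as the complement of the open arcs.

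\emph{The K-group computations.} These are not needed for the five lemma. On the source side they give kernels of surjections onto $\mathbb{Z}$, one per arc, not groups ``modulo one relation per arc''.
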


The proof is the same as that of Higson in \cite{MAhigson}*{§7}, it also appears in \cites{Afgoustidis3, MonkVoigt}.

\begin{proof} We consider the sequence $(W_m)_{m \in \mathbb{N}}$ of subsets of $\mathbb{Z}$ defined by:
$$W_m = \begin{cases}
    \{0\} \quad \text{if $m=1$}\\
    \{-1,1\}\quad \text{if $m=2$}\\
    \{l\} \quad \text{if $m=2l+1$, $l\geq 1$}\\
    \{-l\} \quad \text{if $m=2l+2$, $l\geq 1$}.
\end{cases}$$
This is a list of all the possible sets of minimal $K$-types of irreducible representations of $C^*_{q,r}(G)$ and $C^*(G_0^\star)$. If $x$ is an element of one of these algebras, let us denote by $(x)$ the ideal generated by $x$. We introduce the following sequence of ideals of $C^*_{q,r}(G)$ and $C^*(G_0^\star)$ respectively:
\begin{gather*}
    \mathscr{J}_m = \bigcap_{n \in W_m}(e_{q,n}), \qquad J_m = \bigcap_{n \in W_m}(e_{n}).
\end{gather*}
We write $\mathscr{Q}_m = (\sum_{l\leq m} \mathscr{J}_l) / (\sum_{l <m}\mathscr{J}_l)$ and $Q_m = (\sum_{l\leq m} J_l) / (\sum_{l <m}J_l)$. Since $\alpha_0$ maps $e_n$ to $e_{q,n}$ for all $n \in \mathbb{Z}$, we have $\alpha_0(J_m) \subset \mathscr{J}_m$ for every $m\in \mathbb{N}$. Hence $\alpha_0$ induces a morphism of C*-algebras $\alpha_{0,m} : Q_m \to \mathscr{Q}_m$ for all $m \in \mathbb{N}$.

The image of $C^*(K)$ in $C^*_{q,r}(G)$ and $C^*(G_0^\star)$ generates a dense ideal in both cases. Hence $\sum_{m \in \mathbb{N}} \mathscr{J}_m$ and $\sum_{m \in \mathbb{N}} J_m$ are respectively dense in $C^*_{q,r}(G)$ and $C^*(G_0^\star)$. Because K-theory commutes with inductive limits, the theorem is proven provided that $\alpha_0$ induces an isomorphism $\mathrm{K}_\ast(\sum_{l\leq m} J_l) \to \mathrm{K}_\ast(\sum_{l\leq m} \mathscr{J}_l)$ for all $m \in \mathbb{N}$. By a standard inductive argument using the functoriality of six term exact sequences, this in turn reduces to proving that $\alpha_{0,m}$ induces an isomorphism in K-theory for all $m \in \mathbb{N}$.

Let us fix $m \in \mathbb{N}$. The spectrum of $\mathscr{Q}_m$ consists of the equivalence classes of irreducible representations of $C^*_{q,r}(G)$ whose set of minimal $K$-types is exactly $W_m$. So for any $n \in W_m$, the element $e_{q,n}$ acts on every irreducible representation of $\mathscr{Q}_m$ as a rank-one projection. Using \cite{MAhigson}*{Lemma 6.1}, we infer that $e_{q,n}\mathscr{Q}_m e_{q,n}$ is Morita equivalent to $\mathscr{Q}_m$. By the same argument, the C*-algebra $e_n Q_me_n$ is Morita equivalent to $Q_m$. Now, from the way $\alpha_0$ has been constructed, one can check that $\alpha_{0,m}(e_n Q_me_n) = e_{q,n}\mathscr{Q}_m e_{q,n}$. To sum up, the three bottom arrows of the diagram
$$\begin{tikzcd}
    Q_m \ar[r, "\alpha_{0,m}"]  & \mathscr{Q}_m \\
    e_n Q_me_n \ar[r, "\alpha_{0,m}", "\sim"'] \ar[u, hookrightarrow]& e_{q,n}\mathscr{Q}_m e_{q,n} \ar[u,hookrightarrow]
\end{tikzcd}$$
induce isomorphisms in K-theory. Thus $\mathrm{K}(\alpha_{0,m}) : \mathrm{K}_\ast(Q_m) \to \mathrm{K}_\ast(\mathscr{Q}_m)$ is an isomorphism and the theorem follows.
\end{proof}

\begin{remark}
    One can compute directly the K-theory of $C^*_{q,r}(G)$ by using standard methods. Each of the representations $D^{\pm}(\sigma,n)$ corresponds to a generator of the $\mathrm{K}_0$ group. One the other hand, the analogues of the odd and even principal series are responsible for additional terms $\mathbb{Z}$ and $\mathbb{Z}^3$, respectively, in $\mathrm{K}_0[C^*_{q,r}(G)]$. We thus have
    $$\mathrm{K}_0[C^*_{q,r}(G)] = \mathbb{Z} \oplus \mathbb{Z}^3 \oplus \big(\bigoplus_{\substack{n \in \mathbb{N}\\\sigma= \pm 1}} \mathbb{Z}\,\big).$$
    The group $\mathrm{K}_1[C^*_{q,r}(G)]$ is zero.
\end{remark}

\end{document}